\begin{document}

\newtheorem{theorem}{Theorem}[section]
\newtheorem{proposition}[theorem]{Proposition}
\newtheorem{lemma}[theorem]{Lemma}
\newtheorem{corollary}[theorem]{Corollary}
\newtheorem{fact}[theorem]{Fact}

\theoremstyle{definition}
\newtheorem{definition}[theorem]{Definition}
\newtheorem{conjecture}[theorem]{Conjecture}
\newtheorem{claim}[theorem]{Claim}
\newtheorem{reduction}[theorem]{Reduction}

\theoremstyle{remark}
\newtheorem{remark}[theorem]{Remark}
\newtheorem{example}[theorem]{Example}
\newtheorem{question}[theorem]{Question}

\numberwithin{equation}{section}

\def\id{\operatorname{id}}
\def\dom{\operatorname{dom}}
\def\im{\operatorname{Im}}
\def\Frac{\operatorname{Frac}}
\def\Const{\operatorname{Const}}
\def\spec{\operatorname{Spec}}
\def\span{\operatorname{span}}
\def\exc{\operatorname{Exc}}
\def\alg{\operatorname{alg}}
\def\sep{\operatorname{sep}}
\def\gal{\operatorname{Gal}}
\def\red{\operatorname{red}}
\def\mat{\operatorname{Mat}}
\def\divisor{\operatorname{div}}
\def\Fr{\operatorname{Fr}}

\title{Invariant hypersurfaces}

\author{Jason Bell}
\address{Jason Bell\\
University of Waterloo\\
Department of Pure Mathematics\\
200 University Avenue West\\
Waterloo, Ontario \  N2L 3G1\\
Canada}
\email{jpbell@uwaterloo.ca}

\author{Rahim Moosa}
\address{Rahim Moosa\\
University of Waterloo\\
Department of Pure Mathematics\\
200 University Avenue West\\
Waterloo, Ontario \  N2L 3G1\\
Canada}
\email{rmoosa@uwaterloo.ca}

\author{Adam Topaz}
\address{Adam Topaz\\
Mathematical and Statistical Sciences\\
University of Alberta\\
632 Central Academic Building\\
Edmonton, Alberta \ T6G 2G1\\
Canada }
\email{topaz@ualberta.ca}

\thanks{{\em Keywords}: D-variety, dynamical system, foliation, generalised operator, hypersurface, rational map, self-correspondence}

\date{\today}

\begin{abstract}
The following theorem, which includes as very special cases results of Jouanolou and Hrushovski on algebraic $D$-varieties on the one hand, and of Cantat on rational dynamics on the other, is established:
Working over a field of characteristic zero, suppose $\phi_1,\phi_2:Z\to X$ are dominant rational maps from a (possibly nonreduced) irreducible scheme $Z$ of finite-type to an algebraic variety $X$, with the property that there are infinitely many hypersurfaces on~$X$ whose scheme-theoretic inverse images under $\phi_1$ and $\phi_2$ agree.
Then there is a nonconstant rational function $g$ on $X$ such that $g\phi_1=g\phi_2$.
In the case when $Z$ is also reduced the scheme-theoretic inverse image can be replaced by the proper transform.
A partial result is obtained in positive characteristic.
Applications include an extension of the Jouanolou-Hrushovski theorem to generalised algebraic $\mathcal D$-varieties and of Cantat's theorem to self-correspondences.
\end{abstract}

\thanks{2010 {\em Mathematics Subject Classification}. Primary 14E99; Secondary 12H05 and 12H10.}

\thanks{{\em Acknowledgements}: The authors are grateful for the referee's careful reading of the original submission, leading to several improvements and corrections.
J. Bell and R. Moosa were partially supported by their NSERC Discovery Grants. A. Topaz was partially supported by EPSRC program grant EP/M024830/1,
the University of Alberta, and an NSERC discovery grant.}

\maketitle

\setcounter{tocdepth}{1}

\bigskip
\tableofcontents

\vfill\pagebreak

\section{Introduction}

\noindent
Fix an algebraically closed field $K$ of characteristic zero.
The following is the main result of this paper:

\begin{theorem}
\label{main}
Suppose $X$ is an algebraic variety, $Z$ is an irreducible algebraic scheme, and $\phi_1,\phi_2:Z\to X$ are rational maps whose restrictions to $Z_{\red}$ are dominant, all over $K$.
Then the following are equivalent:
\begin{itemize}
\item[(1)]
There exist nonempty Zariski open subsets $V\subseteq Z$ and $U\subseteq X$ such that the restrictions $\phi_1^V,\phi^V_2:V\to U$ are dominant regular morphisms, and there exist infinitely many hypersurfaces $H$ on $U$ satisfying
$$(\phi_1^V)^{-1}(H)=(\phi^V_2)^{-1}(H).$$
\item[(2)]
There exists $g\in K(X)\setminus K$ such that $g\phi_1=g\phi_2$.
\end{itemize}
If $Z$ is reduced then these are also equivalent to:
\begin{itemize}
\item[(3)]
There exist infinitely many hypersurfaces $H$ on $X$ satisfying
$\phi_1^{*}H=\phi_2^{*}H$.
\end{itemize}
\end{theorem}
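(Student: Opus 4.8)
\emph{The easy directions.} The implications $(2)\Rightarrow(1)$ and, when $Z$ is reduced, $(2)\Rightarrow(3)$ should be routine. Given $g\in K(X)\setminus K$ with $g\phi_1=g\phi_2$, shrink $Z$ and $X$ to open sets on which $g$ becomes a dominant morphism to $\mathbb{A}^1$ and each $\phi_i$ a dominant morphism; by generic smoothness all but finitely many fibres $g^{-1}(c)$ are reduced and irreducible, and $\phi_1^{-1}g^{-1}(c)=(g\phi_1)^{-1}(c)=(g\phi_2)^{-1}(c)=\phi_2^{-1}g^{-1}(c)$ scheme-theoretically, which gives (1); taking closures in $X$ and using reducedness of $Z$ gives (3); and shrinking to open sets gives $(3)\Rightarrow(1)$. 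So the whole content is $(1)\Rightarrow(2)$.

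\emph{Reduction to a key case.} Given (1), I would pass to proper models — resolve $X$ and a model of $Z$, resolve the indeterminacy of the $\phi_i$, and compactify — so as to reduce to: $X,Z$ smooth projective, $\phi_1,\phi_2:Z\to X$ dominant \emph{morphisms}, and infinitely many prime divisors $H_i$ on $X$ with $\phi_1^*H_i=\phi_2^*H_i$ in $\operatorname{Div}(Z)$; one must then produce $g\in K(X)\setminus K$ with $g\phi_1=g\phi_2$. This reduction is more delicate than it looks: extending a hypersurface on the given open $U\subseteq X$ to a prime divisor on the compactified $X$ introduces, after pull-back, a discrepancy supported on the finitely many ``boundary/exceptional'' prime divisors of $Z$, which one controls by a judicious blow-up together with a pigeonhole argument on that finite set, possibly at the cost of passing to an infinite subfamily of the $H_i$. (In the non-reduced case one must additionally carry the structure of the generic stalk $\mathcal{O}_{Z,\eta}$, an Artinian local $K$-algebra.)

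\emph{The clean core: invariant hypersurfaces in one linear system.} Suppose first that infinitely many $H_i$ lie in a single complete linear system $|L|$, with sections $s_i\in H^0(X,L)$. Since $\phi_1^*H_i=\phi_2^*H_i=:D_i$, the sections $\phi_1^*s_i$ and $\phi_2^*s_i$ on $Z$ have the same zero divisor $D_i$; fixing one index produces an isomorphism $\psi:\phi_1^*L\cong\phi_2^*L$ with $\psi(\phi_1^*s_i)=c_i\,\phi_2^*s_i$ and $c_i\in H^0(Z,\mathcal{O}_Z^\times)=K^\times$ because $Z$ is projective. Now $H^0(X,L)$ is finite-dimensional and the $s_i$ are infinitely many, so there is a \emph{circuit} $T$ — a minimal linearly dependent subset — with $\sum_{i\in T}\lambda_i s_i=0$, all $\lambda_i\neq0$, and $|T|\geq3$ since the $H_i$ are distinct. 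Applying $\phi_2^*$ and applying $\psi\circ\phi_1^*$ yields $\sum_{i\in T}\lambda_i\,\phi_2^*s_i=0$ and $\sum_{i\in T}\lambda_i c_i\,\phi_2^*s_i=0$; as $\phi_2^*:H^0(X,L)\hookrightarrow H^0(Z,\phi_2^*L)$ is injective, any relation among the $\phi_2^*s_i$ descends to one among the $s_i$, so minimality of the circuit forces the two relations to be proportional, i.e. $c_i$ is constant on $T$. Then $\phi_1^*(s_i/s_j)=\phi_2^*(s_i/s_j)$ for $i,j\in T$, and $g:=s_i/s_j\in K(X)\setminus K$ (its divisor is $H_i-H_j\neq0$) is the desired first integral. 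The point is that injectivity of $\phi_2^*$ on global sections converts ``infinitely many'' into a forced collapse of the multiplier.

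\emph{The general case and the main obstacle.} To reduce to the previous paragraph one uses that $\operatorname{NS}(X)$ is finitely generated. Either some non-trivial integer combination $\sum n_iH_i$ is principal, $=\divisor(f)$ — then $f\notin K$ and $\phi_1^*f/\phi_2^*f$ is a global unit on $Z$, hence a constant $c$, and $f^{-1}(0),f^{-1}(\infty)$ are supported on the $H_i$ — or else, after subtracting a fixed finitely generated contribution, infinitely many distinct classes land in $\ker(\bar\delta)\cap\operatorname{Pic}^0(X)$, where $\bar\delta=\phi_1^*-\phi_2^*$ on $\operatorname{Pic}$; since a finite group cannot absorb infinitely many of them, $\ker(\bar\delta|_{\operatorname{Pic}^0})$ is positive-dimensional, which dualises to a non-constant $h:X\dashrightarrow Q$ to a positive-dimensional abelian variety with $h\phi_1=t_a\circ h\phi_2$ for some $a\in Q$. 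In both cases we have the first integral up to a twist — a multiplicative constant $c$, or a translation $a$ — and I expect the removal of these twists to be the real work and the main obstacle: in the principal case, leveraging that $f^{-1}(0),f^{-1}(\infty)$ are invariant to reduce the target to $\mathbb{P}^1$, where three invariant points force $\phi_1^*u=\phi_2^*u$ via the elementary identity among $u$, $u-1$, $u/(u-1)$ — essentially the circuit phenomenon again; and in the abelian case, pushing the infinite family of invariant hypersurfaces down to $Q$ (or to $Q/\overline{\langle a\rangle}$) and showing that $a$ can be arranged to be ``essentially torsion'', so that a non-constant function on the quotient survives and pulls back to $g$. Finally, the non-reduced refinement of $(1)\Leftrightarrow(2)$ requires rerunning the above with $K^\times$ replaced by the unit group of the Artinian ring $\mathcal{O}(Z)$, whose additional unipotent part coming from the nilradical is handled by a logarithm argument available in characteristic zero; and $(3)$ follows, once $Z$ is reduced, from the cycle $(2)\Rightarrow(3)\Rightarrow(1)$ already indicated.
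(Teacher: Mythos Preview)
Your approach is genuinely different from the paper's, and the circuit argument in your ``clean core'' paragraph is essentially the same idea the paper uses --- a minimal dependence relation forces the multiplicative twists to collapse. But the paper organises the reduction to that core quite differently, and your reduction has real gaps.

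The paper does \emph{not} compactify or invoke $\operatorname{NS}(X)$ or $\operatorname{Pic}^0$. Instead, its first move is to descend everything --- $Z,X,\phi_1,\phi_2$, and (after a conjugation argument) infinitely many of the invariant hypersurfaces --- to a \emph{finitely generated} subfield $k\subseteq K$. Over such a $k$, the divisor class group of $X_k$ is finitely generated (Mordell--Weil--N\'eron), so after a single localisation one may take the coordinate ring $R$ of $X$ to be a UFD; then every invariant hypersurface is $V(a_j)$ for some $a_j\in R$, and the hypothesis becomes $f_1(a_j)S=f_2(a_j)S$, i.e.\ $f_1(a_j)=u_j f_2(a_j)$ with $u_j\in S^\times$. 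Crucially, $S^\times/k^\times$ is also finitely generated (again because $k$ is), so one can pass to multiplicative combinations $h_r$ of the $a_j$ with $f_1(h_r)=\lambda_r f_2(h_r)$ for $\lambda_r\in k^\times$. Taking more than $\operatorname{trdeg}(\operatorname{Frac}(R)/k)$ of these and running your circuit argument on a minimal \emph{algebraic} relation (rather than a linear one in a fixed $H^0$) kills the twist. Your route via $\operatorname{Pic}^0$ and Albanese is trying to do over $K$ what the descent to $k$ accomplishes directly; over $K$ the group $\operatorname{Pic}^0(X)(K)$ is enormous and your dichotomy does not cleanly terminate --- the ``essentially torsion'' step for the translation $a$ is not justified, and the three-points-on-$\mathbb P^1$ idea does not follow from a single $f$ with $f\phi_1=c\,f\phi_2$.

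The non-reduced case is where your sketch falls furthest short. The paper's argument here is substantial: one picks a nonzero square-zero $x$ in the nilradical of $S$ with prime annihilator $Q$, observes that $(f_1-f_2)$ takes values in $x\cdot\operatorname{Frac}(S)$ on the subalgebra generated by suitable $b_j$, and uses this to \emph{construct a genuine $k$-linear derivation} $\delta$ on $\operatorname{Frac}(S/Q)$ with $\delta(b_j)/b_j$ integral. One then invokes a Jouanolou-type finiteness result (Proposition~2.5 in the paper, itself resting on prior work) to produce a $\delta$-constant, which is shown to lie in the equaliser of $f_1,f_2$. Your one-line ``logarithm on the unipotent part of $\mathcal O(Z)^\times$'' does not capture this; in particular there is no reason the unit discrepancies $u_j$ should individually lie in $1+\text{nilradical}$, and even when they do, converting that into an honest first integral requires exactly the derivation-plus-Jouanolou machinery. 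Finally, your $(3)\Rightarrow(1)$ is too quick: the paper proves separately (Proposition~6.5) that, again after descent to a finitely generated field and an \'etale-localisation argument, proper transforms and scheme-theoretic inverse images of hypersurfaces agree on a Zariski open; this is not automatic.
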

\noindent
Here, and throughout this paper, we only consider \emph{algebraic schemes}, i.e. separated schemes of finite type.
By an {\em algebraic variety} we mean an integral (so reduced and irreducible) algebraic scheme, and by a {\em hypersurface} we mean a Zariski closed subset of pure codimension one. If $\phi:Z\to X$ is a morphism of schemes and $H\subseteq X$ is a Zariski closed subset, then we use $\phi^{-1}(H)$ to denote the {\em scheme-theoretic inverse image} of $H$.
If $\phi:Z\to X$ is a dominant rational map of algebraic varieties then $\phi^*H$ denotes the {\em proper transform} of $H$, i.e. the union of those irreducible components of the Zariski closure of the set-theoretic inverse image of $H$ that project dominantly onto irreducible components of $H$.

As motivation, let us consider two well-known special cases of the theorem.

The first is from differential-algebraic geometry.
By an {\em algebraic $D$-variety} we mean an affine algebraic variety $X$ over $K$ equipped with a regular section to the tangent bundle, $s:X\to TX$.
A closed subvariety $H\subseteq X$ is a {\em $D$-subvariety} if $s\upharpoonright_H:H\to TH$.
Note that $s$ corresponds to a $K$-linear derivation $\delta$ on the co-ordinate ring $K[X]$, and that a $D$-subvariety corresponds to a $\delta$-ideal of $K[X]$.
Note also that the derivation $\delta$ extends uniquely to a derivation on the fraction field $K(X)$.
The following is a consequence of unpublished work of Hrushovski~\cite{hrushovski-jouanolou} in the mid-nineties on model-theoretic implications of a theorem of Jouanalou~\cite{jouanolou} on foliations from the seventies; see~\cite[Theorem 4.2]{freitag-moosa} for a published account.

\begin{corollary}[Jouanolou-Hrushovski]
\label{udij}
Suppose $(X,s)$ is an algebraic $D$-variety with infinitely many $D$-subvarieties of pure codimension one.
Then there exists $g\in K(X)\setminus K$ such that $\delta(g)=0$.
\end{corollary}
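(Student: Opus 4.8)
The plan is to realize an algebraic $D$-variety $(X,s)$ as a pair of maps $Z \rightrightarrows X$ and then transport the conclusion of Theorem \ref{main}. The natural choice is to let $Z$ be the first-order infinitesimal neighbourhood of $X$ built from the derivation $\delta$ --- concretely, working affinely, set $Z = \spec\big(K[X][\epsilon]/(\epsilon^2)\big)$, a nonreduced irreducible algebraic scheme with $Z_{\red} = X$. The section $s$ (equivalently $\delta$) gives a canonical map: the ``prolongation'' map $\phi_2 : Z \to X$ dual to the ring homomorphism $K[X] \to K[X][\epsilon]/(\epsilon^2)$ sending $f \mapsto f + \delta(f)\epsilon$, while $\phi_1 : Z \to X$ is the projection dual to the inclusion $K[X] \hookrightarrow K[X][\epsilon]/(\epsilon^2)$. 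Both restrict to the identity on $Z_{\red} = X$, hence are dominant on $Z_{\red}$, so the hypotheses of Theorem \ref{main} on the maps are satisfied.

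The next step is to check that condition (1) of Theorem \ref{main} holds, using the hypothesis that there are infinitely many codimension-one $D$-subvarieties. Here I would verify the key translation: for an irreducible hypersurface $H \subseteq X$ with ideal $I \subseteq K[X]$, one has $\phi_1^{-1}(H) = \phi_2^{-1}(H)$ as subschemes of $Z$ if and only if $I$ is a $\delta$-ideal, i.e. $H$ is a $D$-subvariety. Indeed, $\phi_1^{-1}(H)$ is cut out in $K[X][\epsilon]/(\epsilon^2)$ by $I$ itself (extended), whereas $\phi_2^{-1}(H)$ is cut out by the elements $f + \delta(f)\epsilon$ for $f \in I$; comparing these two ideals of $K[X][\epsilon]/(\epsilon^2)$ modulo $\epsilon^2$ shows they coincide precisely when $\delta(f) \in I$ for all $f \in I$. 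Since the $D$-variety has infinitely many codimension-one $D$-subvarieties, we obtain infinitely many hypersurfaces $H$ on $X$ (hence, after passing to suitable opens $V \subseteq Z$, $U \subseteq X$, on $U$) with $\phi_1^{-1}(H) = \phi_2^{-1}(H)$. One should take a little care that shrinking to opens preserves infinitely many of these hypersurfaces, but since codimension one is preserved under restriction to a dense open and only finitely many components are lost, this is routine.

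Finally, Theorem \ref{main} yields $g \in K(X) \setminus K$ with $g\phi_1 = g\phi_2$. It remains to unwind what this says about $\delta$. The equality $g\phi_1 = g\phi_2$ of rational maps $Z \to X$ means the two pullback homomorphisms $K(X) \to K(Z)$ agree on $g$; since $K(Z) = K(X)[\epsilon]/(\epsilon^2)$ and the two pullbacks send $g$ to $g$ and to $g + \delta(g)\epsilon$ respectively (extending $\delta$ to $K(X)$ as noted in the excerpt), the equation forces $\delta(g)\epsilon = 0$ in $K(X)[\epsilon]/(\epsilon^2)$, i.e. $\delta(g) = 0$. This is exactly the desired conclusion.

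The main obstacle I anticipate is not any single hard step but rather the bookkeeping in the second paragraph: making the scheme-theoretic inverse image computation in the nonreduced ring $K[X][\epsilon]/(\epsilon^2)$ precise, and ensuring that the restriction to Zariski opens demanded by condition (1) does not destroy the infinitude of invariant hypersurfaces. Everything else is a direct dictionary translation between the language of derivations on $K[X]$ and the language of the pair of maps out of the infinitesimal neighbourhood.
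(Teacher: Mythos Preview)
Your proposal is correct and follows essentially the same route as the paper: the paper sets $Z=\spec(R[\epsilon]/\epsilon^2)$ with $\phi_1$ induced by $r\mapsto r+\delta(r)\epsilon$ and $\phi_2$ by the inclusion (so your $\phi_1,\phi_2$ are swapped, which is immaterial), and defers the verification that $D$-subvarieties correspond to equal scheme-theoretic inverse images to the more general Proposition~\ref{d-ideal} in~\S\ref{section-diff}. Two cosmetic remarks: since $\phi_1,\phi_2$ are already regular morphisms you may take $V=Z$, $U=X$ and skip the shrinking discussion entirely; and in your final paragraph ``rational maps $Z\to X$'' should read ``rational functions on $Z$''.
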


\begin{proof}
If  $X=\spec(R)$, apply Theorem~\ref{main} to
\begin{itemize}
\item
$Z=\spec(R[\epsilon]/\epsilon^2)$,
\item
$\phi_1:Z\to X$ the morphism induced by the $K$-algebra homomorphism from $R$ to $R[\epsilon]/\epsilon^2$ given by $r\mapsto r+\delta(r)\epsilon$, and 
\item
$\phi_2:Z\to X$ the morphism induced by the natural inclusion of $R$ in $R[\epsilon]/\epsilon^2$.
\end{itemize}
See~$\S$\ref{section-diff} for details.
\end{proof}

In fact, we obtain the Jouanolou-Hrushovski result for the general setting of ``algebraic $\mathcal D$-varieties", where the derivation is replaced by any system $\mathcal D$ of {\em generalised operators} in the sense of Moosa-Scanlon~\cite{paperC}.
This is Theorem~\ref{main-d} below.

We also recover from Theorem~\ref{main} a result in rational dynamics.
By a {\em rational dynamical system} we mean an algebraic variety $X$ over $K$ equipped with a dominant rational self-map $\phi:X\to X$.
A Zariski closed subset $H\subseteq X$ is {\em totally invariant} if $\phi^*H=H$.
The following is the algebraic case of Theorem~B in~\cite{cantat}. 

\begin{corollary}[Cantat~\cite{cantat}]
\label{cor-cantat}
Suppose $(X,\phi)$ is a rational dynamical system with infinitely many totally invariant hypersurfaces.
Then there exists $g\in K(X)\setminus K$ such that $g\phi=g$.
\end{corollary}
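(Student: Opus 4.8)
The plan is to obtain this as an immediate application of Theorem~\ref{main}, taking $Z=X$, $\phi_1=\phi$, and $\phi_2=\id_X$. Since $X$ is an algebraic variety it is in particular reduced and irreducible, so $Z$ is an irreducible algebraic scheme with $Z_{\red}=Z$. The restriction of $\phi_1$ to $Z_{\red}$ is dominant because $\phi$ is a dominant rational self-map by hypothesis, and the restriction of $\phi_2$ is dominant because the identity map is. Thus the data $(Z,\phi_1,\phi_2)$ fit the hypotheses of Theorem~\ref{main}, and since $Z$ is reduced, the equivalent condition~(3) is available to us.

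Next I would unwind what condition~(3) says for this data. For any hypersurface $H$ on $X$, the proper transform of $H$ under the identity map is $H$ itself, so the equation $\phi_1^{*}H=\phi_2^{*}H$ becomes $\phi^{*}H=H$ — which is exactly the statement that $H$ is totally invariant for the dynamical system $(X,\phi)$. Hence the hypothesis that $(X,\phi)$ admits infinitely many totally invariant hypersurfaces is precisely condition~(3) of Theorem~\ref{main} for the chosen $(Z,\phi_1,\phi_2)$. Invoking the theorem, condition~(2) holds: there is $g\in K(X)\setminus K$ with $g\phi_1=g\phi_2$, i.e. $g\phi=g\id_X=g$, which is what we want.

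Since the argument is essentially a translation of hypotheses, there is no real obstacle here; all the substance is carried by Theorem~\ref{main} itself. The only points requiring a moment's attention are bookkeeping ones: confirming that $\id_X^{*}H=H$ under the paper's definition of proper transform, and — if one preferred to route through condition~(1) rather than~(3) — checking that total invariance on $X$ indeed restricts to an equality of scheme-theoretic inverse images over a suitable common open set on which $\phi$ and $\id_X$ are regular and dominant. Both are routine, so I would simply present the deduction via condition~(3).
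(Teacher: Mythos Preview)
Your proposal is correct and matches the paper's own proof essentially verbatim: the paper likewise applies Theorem~\ref{main} with $Z=X$, $\phi_1=\phi$, $\phi_2=\id_X$, and defers the details to~\S\ref{section-red}. Your unwinding of condition~(3) via $\id_X^{*}H=H$ is exactly the intended translation.
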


\begin{proof}
Apply Theorem~\ref{main} to
\begin{itemize}
\item
$Z=X$,
\item
$\phi_1=\phi$, and 
\item
$\phi_2=\id_X$.
\end{itemize}
See~$\S$\ref{section-red} for details.
\end{proof}

Again, we actually get more: we can replace the dominant rational self-map $\phi$ in the above corollary with an arbitrary self-correspondence.
This is Corollary~\ref{main-corr} below.
In fact, it is, we think, useful to view the data of Theorem~\ref{main}, namely the diagram
$$\xymatrix{
Z\ar[d]_{\phi_2} \ar[r]^{\phi_1}&X\\
X
}$$
as a generalised notion of self-correspondence on $X$, a self-correspondence that need not be reduced and need not be finite-to-finite.

Our theorem thus unifies these two well-known results, yielding at the same time natural generalisations in both cases.

A word about the proof of Theorem~\ref{main}.
Our approach is algebraic, thus differing significantly from the methods of Jouanalou-Hrushovski, and Cantat in the special cases.
We first reduce to a situation where everything is defined over a finitely generated subfield and the hypersurfaces have principal vanishing ideals.
In that setting our result appears as Theorem~\ref{main-algebra} below, whose proof is where the main technical work of the paper is done.
When $Z$ is reduced we follow to some extent the approach of~\cite[Theorem~1.2]{BRS} which is related to Cantat's theorem but obtained independently.
A separate argument (appearing in Section~\ref{section-red}) is required to replace the scheme-theoretic inverse image with the proper transform in the reduced case.
When $Z$ is non-reduced we concoct a derivation and rely on a refinement of~\cite[Proposition~6.10]{bllsm} that  is itself a refinement (but independent) of Jouanolou-Hrushovski.
Besides this use of~\cite{bllsm}, which is substantial, our proof of Theorem~\ref{main} is largely self-contained.

Once Theorem~\ref{main} is proved, we look closer in Section~\ref{section-terminal} at the case when $Z$ is reduced, and are lead to study the birational geometry of algebraic varieties equipped with a set of hypersurfaces.
More precisely, we consider the category whose objects are normal varieties $X$ equipped with a set of prime divisors $S$, and where a morphism $(X,S)\to(Y,T)$ is a dominant rational map $X\to Y$ with generic fibre irreducible, and such that, up to a finite set, $S$ is obtained as the proper transform of elements of $T$.
For this category, in the case when we are working over a field of finite transcendence degree, we give a geometric proof that every object $(X,S)$ admits a terminal morphism; one that factors through every other morphism originating at $(X,S)$.
See Theorem~\ref{terminal} below for a precise statement.
Combining this theorem -- which may be of independent interest -- with Cantat's theorem, we obtain a more conceptual alternative proof of Theorem~\ref{main} in the special case when $Z$ is reduced and $\phi_1,\phi_2$ have irreducible generic fibres.

In a final section we discuss what goes through in positive characteristic.
Because of our reliance on the characteristic zero differential-algebraic geometry of~\cite{bllsm} when $Z$ is non-reduced, we restrict our attention to the reduced case.
But even that -- namely, the equivalence of conditions~(2) and~(3) of Theorem~\ref{main} when $Z$ is reduced -- does not hold as stated in positive characteristic.
We do expect it to hold if we ask the generic fibres of $\phi_1$ and $\phi_2$ to be {\em geometrically reduced} -- something that is automatically satisfied in characteristic zero.
But we are only able to prove the equivalence if we add to~(3) the additional constraint that infinitely many of the invariant hypersurfaces $H$ are defined over (the separable closure of) a fixed finitely generated subfield.
This is Theorem~\ref{main-charp} below.

Our current methods have a couple of drawbacks that we present here as suggesting possibilities for future work.
The first is regarding effective uniform bounds.
Tracing through the proofs it is possible to compute explicitly a bound $N$ such that the ``infinitely many" in~(1) and~(3) of Theorem~\ref{main} can be replaced by ``more than~$N$".
But $N$ will depend not only on natural geometric invariants associated to the data, but also on the rank of the divisor class group of $X$ (over a minimal field of definition).
So these bounds are worse, less uniform, than those that arise in the special cases dealt with by the work of Jouanolou-Hrushovski and Cantat.
It would be useful to find an effective bound $N$ that remains constant as $Z,X,\phi_1,\phi_2$ vary in an algebraic family.

A second defficiency is that we are not able to work in the complex analytic setting.
The methods of Jouanolou-Hrushovski and Cantat, in contrast, extend to compact complex manifolds and meromorphic maps.
In particular, Cantat's results in~\cite{cantat} include as a special case Krasnov's theorem~\cite{krasnov} that a compact complex manifold without nonconstant meromorphic functions has only finitely many hypersurfaces.
A generalisation of Theorem~\ref{main} that includes complex analytic spaces would therefore be of significant interest.

\medskip
\noindent
{\em Throughout this paper all rings are assumed to be commutative, unitary, and all fields are of characteristic zero except in the final Section~\ref{section-charp}.}

\bigskip
\section{Some differential algebra preliminaries}

\noindent
By a {\em derivation} we mean a linear map $\delta:R\to S$, where $R\subseteq S$ is an extension of integral domains of characteristic zero, such that $\delta(ab)=a\delta(b)+\delta(a)b$ for all $a,b\in R$.
If $A\subseteq R$ is a subring, then we say that $\delta$ is {\em $A$-linear} to mean that $\delta(a)=0$ for all $a\in A$ -- which we note is equivalent to $\delta$ being a morphism of $A$-modules.
By the {\em constants} of the derivation $\delta:R\to S$ we mean the subring $R^\delta:=\{a\in R:\delta a=0\}$.
If $R=S$ then we call $(R,\delta)$ a {\em differential ring}.
A differential ring whose underlying ring is a field is called a {\em differential field}.

Here is a basic fact about derivations that we record now for later use, and that is deduced by a straightforward computation using the Leibniz rule.

\begin{fact}
\label{differentiatepoly}
Suppose $\delta:R\to S$ is a derivation, $P$ is a polynomial in $R[x_1,\dots,x_n]$, and $a=(a_1,\dots,a_n)\in R^n$.
Then 
\begin{equation*}
\delta P(a)=\sum_{j=1}^n\frac{\delta P}{\delta x_j}(a)\delta(a_j) +P^\delta(a)
\end{equation*}
where $P^\delta\in S[x_1,\dots,x_n]$ is obtained by applying $\delta$ to the coefficients of $P$.
\end{fact}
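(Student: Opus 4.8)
The plan is to reduce, using additivity of $\delta$, to the case of a single monomial, and then to treat a monomial by iterating the Leibniz rule. Write $P=\sum_{\alpha}c_\alpha x^\alpha$ as a finite sum, where $\alpha=(\alpha_1,\dots,\alpha_n)$ ranges over a finite set of multi-indices, each $c_\alpha\in R$, and $x^\alpha$ abbreviates $x_1^{\alpha_1}\cdots x_n^{\alpha_n}$; correspondingly $P(a)=\sum_\alpha c_\alpha a^\alpha$ with $a^\alpha=a_1^{\alpha_1}\cdots a_n^{\alpha_n}$. Since $\delta$ is additive it suffices to compute each $\delta(c_\alpha a^\alpha)$ and sum. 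One application of the Leibniz rule gives $\delta(c_\alpha a^\alpha)=\delta(c_\alpha)\,a^\alpha+c_\alpha\,\delta(a^\alpha)$, and summing the first terms over $\alpha$ produces exactly $P^\delta(a)$ by the definition of $P^\delta$.

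For the second terms I would first note, by an easy induction on $m\geq 1$, that $\delta(b^m)=m\,b^{m-1}\delta(b)$ for any $b\in R$ (the term being $0$ when $m=0$), and then, by induction on $n$, that $\delta(a^\alpha)=\sum_{j=1}^n \alpha_j\,a_j^{\alpha_j-1}\big(\prod_{k\neq j}a_k^{\alpha_k}\big)\delta(a_j)$, both steps using nothing beyond the Leibniz rule. Multiplying by $c_\alpha$ and summing over $\alpha$, the coefficient of $\delta(a_j)$ becomes $\sum_\alpha c_\alpha\,\alpha_j\,a_j^{\alpha_j-1}\prod_{k\neq j}a_k^{\alpha_k}$, which is precisely $\frac{\delta P}{\delta x_j}$ evaluated at $a$. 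Adding this to the $P^\delta(a)$ contribution yields the asserted identity.

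There is no real obstacle here: the argument is a direct computation with the Leibniz rule, and the only care required is routine multi-index bookkeeping — in particular the convention that the $j$-th summand of $\delta(a^\alpha)$ vanishes when $\alpha_j=0$, which is consistent with the formal partial derivative annihilating variables that do not occur in a given monomial.
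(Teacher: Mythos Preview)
Your proof is correct and is exactly the ``straightforward computation using the Leibniz rule'' that the paper alludes to without spelling out; the paper does not give any further details beyond that phrase. There is nothing to compare: your reduction to monomials via additivity followed by iterated Leibniz is the standard (and essentially only) way to carry this out.
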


The following two lemmas are also very elementary and well-known.

\begin{lemma}
\label{deltafacts}
Suppose $K/k$ is a function field extension, $\delta$ is a $k$-linear derivation on~$K$, and $R\subseteq K$ is a finitely generated $k$-subalgebra.
Then there exists a finitely generated $k$-algebra extension $R'$ of $R$ in $K$ such that $\delta$ restricts to a differential ring structure on $R'$.
\end{lemma}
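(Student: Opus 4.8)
The plan is to reduce everything to Fact~\ref{differentiatepoly} together with the hypothesis that $\delta$ is $k$-linear. The key observation is a closure principle: \emph{if $R'$ is a $k$-subalgebra of $K$ generated by finitely many elements $h_1,\dots,h_N$ with $\delta(h_j)\in R'$ for every $j$, then $\delta(R')\subseteq R'$.} Indeed, every element of $R'$ has the form $Q(h_1,\dots,h_N)$ for some $Q\in k[x_1,\dots,x_N]$, and Fact~\ref{differentiatepoly} gives $\delta\bigl(Q(h_1,\dots,h_N)\bigr)=\sum_{j=1}^N\frac{\partial Q}{\partial x_j}(h_1,\dots,h_N)\,\delta(h_j)+Q^\delta(h_1,\dots,h_N)$, where the term $Q^\delta$ vanishes because $Q$ has coefficients in $k$ and $\delta$ is $k$-linear; since $\frac{\partial Q}{\partial x_j}(h_1,\dots,h_N)\in R'$ and $\delta(h_j)\in R'$, the whole expression lies in $R'$. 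So the problem reduces to enlarging $R$ to a finitely generated $k$-subalgebra of $K$ whose generators are sent back into it by $\delta$.

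To produce such an algebra I would first pass to a finitely generated $k$-subalgebra $R_0$ of $K$ with $R\subseteq R_0$ and $\Frac(R_0)=K$. This is possible because $K/k$ is a function field extension: writing $K=k(c_1,\dots,c_m)$ and fixing $k$-algebra generators $a_1,\dots,a_n$ of $R$, one may take $R_0:=k[a_1,\dots,a_n,c_1,\dots,c_m]$. Now fix $k$-algebra generators $g_1,\dots,g_N$ of $R_0$. Since $\delta$ maps $K$ into $K=\Frac(R_0)$, each $\delta(g_j)$ can be written as $p_j/q_j$ with $p_j,q_j\in R_0$ and $q_j\neq 0$. Set $q:=q_1\cdots q_N$ and $R':=R_0[q^{-1}]$; this is a finitely generated $k$-subalgebra of $K$ containing $R$, generated as a $k$-algebra by $g_1,\dots,g_N,q^{-1}$.

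It then remains to check $\delta(R')\subseteq R'$, and by the closure principle of the first paragraph it is enough to verify this on the generators $g_1,\dots,g_N,q^{-1}$. For each $g_j$ this is immediate: $\delta(g_j)=p_j/q_j\in R_0[q^{-1}]=R'$ because $q_j$ divides $q$. For $q^{-1}$ we have $\delta(q^{-1})=-q^{-2}\delta(q)$; writing $q=Q(g_1,\dots,g_N)$ with $Q\in k[x_1,\dots,x_N]$ and applying Fact~\ref{differentiatepoly} once more (again $Q^\delta=0$ by $k$-linearity), $\delta(q)=\sum_{j}\frac{\partial Q}{\partial x_j}(g_1,\dots,g_N)\,(p_j/q_j)\in R_0[q^{-1}]=R'$, whence $\delta(q^{-1})\in R'$ as well.

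I do not expect a genuine obstacle in this argument; the only subtlety to keep in mind is that a derivation of $K$ does not in general restrict to a derivation of an arbitrary finitely generated subalgebra — its values are only fractions of elements of that subalgebra — which is exactly why the localization step is needed, and why it is convenient to arrange $\Frac(R_0)=K$ at the outset so that all the relevant denominators already lie inside $R_0$.
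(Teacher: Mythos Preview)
Your proof is correct and follows essentially the same approach as the paper's: both reduce to checking $\delta$ on generators via Fact~\ref{differentiatepoly}, first enlarge $R$ so that its fraction field is all of $K$, and then adjoin the inverses of the denominators appearing in $\delta$ of the generators, using the quotient rule to handle those new generators. Your version is somewhat more explicit (you invert a single product $q$ rather than adjoining each $1/Q_i$ separately, and you spell out the closure principle as a separate step), but the underlying argument is the same.
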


\begin{proof}
We may as well assume that $R=k[a]$ for some  $a=(a_1,\dots,a_n)$ such that $K=k(a)$.
Because of Fact~\ref{differentiatepoly}, it suffices to show that after possibly extending $a$ to a longer finite tuple $a'=(a_1,\dots,a_m)$ from $K$, and setting $R':=k[a']$, that $\delta a_i\in R'$ for all $i\leq m$.
If we write $\delta(a_i)=\frac{P_i(a)}{Q_i(a)}$ for $i\leq n$, then it is not hard to see, using Fact~\ref{differentiatepoly} again, as well as the quotient rule for derivations, that $a':=(a_1,\dots,a_n,\frac{1}{Q_1(a)},\dots,\frac{1}{Q_n(a)})$ works.
\end{proof}

\begin{lemma}
\label{constantalgebraic}
If $(L,\delta)\supseteq (K,\delta)$ is a differential field extension then $$(K^\delta)^{\alg}\cap L=K^{\alg}\cap L^\delta.$$
In particular, taking $L=K$, we have that $K^\delta$ is relatively algebraically closed in~$K$.
\end{lemma}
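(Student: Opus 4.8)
The plan is to prove the two inclusions separately, in each case by differentiating the relevant minimal polynomial and applying Fact~\ref{differentiatepoly} (in one variable), together with the characteristic-zero fact that irreducible polynomials are separable. Both directions are short and essentially symmetric.

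For $(K^\delta)^{\alg}\cap L\subseteq K^{\alg}\cap L^\delta$, I would take $a\in L$ algebraic over $K^\delta$. Since $K^\delta\subseteq K$, the element $a$ is automatically algebraic over $K$, so the only thing to verify is $\delta a=0$. Let $P$ be the (monic) minimal polynomial of $a$ over $K^\delta$. Applying $\delta$ to the equation $P(a)=0$ and using Fact~\ref{differentiatepoly}, the term obtained by differentiating the coefficients of $P$ vanishes, because those coefficients lie in $K^\delta$; what remains is $P'(a)\delta a=0$, where $P'$ is the formal derivative. As $P$ is irreducible over the characteristic-zero field $K^\delta$, it is separable, so $P'(a)\neq 0$, forcing $\delta a=0$ and hence $a\in L^\delta$.

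For the reverse inclusion $K^{\alg}\cap L^\delta\subseteq (K^\delta)^{\alg}\cap L$, I would take $a\in L$ with $\delta a=0$ and $a$ algebraic over $K$, and let $P$ be its monic minimal polynomial over $K$, of degree $n$ say. Applying $\delta$ to $P(a)=0$ via Fact~\ref{differentiatepoly}, this time the term $P'(a)\delta a$ drops out because $\delta a=0$, leaving $P^\delta(a)=0$, where $P^\delta$ is obtained by applying $\delta$ to the coefficients of $P$; since $P$ is monic, $P^\delta$ has degree at most $n-1$. Minimality of $P$ over $K$ then forces $P^\delta=0$, i.e.\ all coefficients of $P$ are $\delta$-constants, so $P\in K^\delta[x]$ and $a$ is algebraic over $K^\delta$. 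The "in particular" clause is then immediate on setting $L=K$: the left-hand side becomes the relative algebraic closure of $K^\delta$ in $K$, while the right-hand side is $K\cap K^\delta=K^\delta$.

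There is no real obstacle here; the proof is purely formal. The only points that warrant a moment's attention are that the coefficient-differentiation term of Fact~\ref{differentiatepoly} genuinely vanishes for the correct reason in each direction, that separability (hence characteristic zero) is what makes $P'(a)\neq 0$ in the first inclusion, and that the monicity of the minimal polynomial is what keeps $P^\delta$ of strictly smaller degree so that minimality can be invoked in the second.
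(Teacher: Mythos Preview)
Your proof is correct and follows essentially the same approach as the paper: differentiate the minimal polynomial via Fact~\ref{differentiatepoly} in each direction, using that $P^\delta=0$ in the first inclusion and $\delta a=0$ in the second to isolate the remaining term. The only cosmetic difference is that the paper phrases the first inclusion as ``$\deg P\ge 1$ so $P'\neq 0$ and of strictly smaller degree'' rather than invoking separability explicitly, but the content is identical.
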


\begin{proof}
If $a\in L$ is algebraic over $K^\delta$ then we apply Fact~\ref{differentiatepoly} with $P(x)$ a minimal polynomial of $a$ over $K^\delta$ and conclude that $\frac{d P}{d x}(a)\delta(a)=0$.
But as $\deg P\geq 1$, we must have that $\frac{d P}{d x}\neq 0$ and of degree strictly less than $\deg P$, so that $\delta(a)=0$.

Conversely, if $a\in L^\delta$ is algebraic over $K$ then we apply Fact~\ref{differentiatepoly} with $P(x)$ the minimal monic polynomial of $a$ over $K$ and conclude that $P^\delta(a)=0$.
But as $P$ is monic $P^\delta$ will be of strictly smaller degree unless it is identically zero.
So it must be identically zero, implying that all of the coefficients of $P$ are in $K^\delta$, and hence that $a$ is algebraic over $K^\delta$.
\end{proof}

The following is maybe less widely known, but is a consequence of an argument appearing in~\cite{hrushovski-jouanolou}.
We give a proof here for the sake of completeness.

\begin{lemma}
\label{extend}
Suppose $F/k$ is a function field extension, $K/F$ is a field extension, and $\delta:F\to K$ is a $k$-linear derivation.
Then $\delta$ extends to a differential field structure on $K$ such that $K^\delta$ is algebraic over $F^\delta$.
\end{lemma}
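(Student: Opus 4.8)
My plan is to extend $\delta$ to all of $K$ one element at a time, the only real choice arising when we adjoin a transcendental element, where the value of the derivation must be chosen carefully to avoid creating new constants. Precisely, I would carry out a transfinite recursion producing an increasing, compatible family of derivations $d_i\colon E_i\to K$ on intermediate fields $F\subseteq E_i\subseteq K$, all extending $\delta$, with the twin invariants that $E_i$ is closed under $d_i$ — so $(E_i,d_i)$ is a differential field — and that $E_i^{d_i}$ is algebraic over $F^\delta$. Both invariants survive unions, so limit stages are automatic. At the outset one must replace $(F,\delta)$, for which $\delta(F)\not\subseteq F$ in general, by a differential field: enlarge $F$ to the function field $F(\delta F)=k(a_1,\dots,a_n,\delta a_1,\dots,\delta a_n)$ (writing $F=k(a_1,\dots,a_n)$) — this is finitely generated over $k$, and $\delta(F)$ lands inside it by the Leibniz rule (Fact~\ref{differentiatepoly}), since $\delta$ kills $k$ — and then extend $\delta$ across this finitely generated extension, which is itself an instance of the transcendental-plus-algebraic steps below, observing by Lemma~\ref{constantalgebraic} that the constants grow only algebraically. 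Recall also that $F^\delta$ is relatively algebraically closed in $F$, again by Lemma~\ref{constantalgebraic}.

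So it remains to extend a differential field $(E,d)$, with $E^d$ algebraic over $F^\delta$, along one element $a\in K$. When $a$ is algebraic over $E$ the extension of $d$ is forced (characteristic zero: solve $\frac{dP}{dx}(a)\,d(a)=-P^d(a)$ for the minimal polynomial $P$ of $a$ over $E$, with $\frac{dP}{dx}(a)\neq 0$, as in Fact~\ref{differentiatepoly}), it keeps $E(a)$ closed under $d$, and since $(E(a),d)\supseteq(E,d)$ is an algebraic extension of differential fields, Lemma~\ref{constantalgebraic} gives that $E(a)^d$ is algebraic over $E^d$, hence over $F^\delta$. When $a$ is transcendental over $E$ we are free to set $d(a)=g$ for any $g\in E$, and afterwards replace $E(a)$ by its $d$-closure — a finitely generated extension, handled by the algebraic step — to restore closure under $d$. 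The crucial point is a Leibniz-rule computation: expressing a putative constant of $E(a)$ not lying in $E^d$ as $P(a)/Q(a)$ in lowest terms with $Q$ monic, applying $d$, and using that $d$ does not raise the degree in $a$ together with unique factorisation in $E[a]$, one concludes that $ng=-d(x)$ for some $x\in E$ and some $n\geq 1$. Since $\mathbb{Q}\subseteq k\subseteq E^d$, the derivation $d$ is $\mathbb{Q}$-linear, so $d(E)$ is a $\mathbb{Q}$-subspace of $E$ and this conclusion reads simply $g\in d(E)$. Thus if $g$ is chosen in $E\setminus d(E)$ then $E(a)^d=E^d$, still algebraic over $F^\delta$ (and if $d=0$ then any $g\neq 0$ serves).

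The main obstacle is the one fact the transcendental step rests on: that $d\colon E\to E$ is \emph{not} surjective. By the second invariant, $E$ and $E^d$ have the same transcendence degree over $F^\delta$, which — after first reducing the whole lemma to the situation where the ambient fields are finitely generated over $k$ — is finite; so $E$ is the function field of an irreducible variety over its field of constants $E^d$, carrying the rational vector field corresponding to $d$, and for such a field $d$ has nonzero cokernel. This non-vanishing is exactly the kind of statement at the core of the Jouanolou–Hrushovski circle of ideas — it is deduced from a residue/index analysis of the vector field along a suitable prime divisor — and it is the one genuinely non-formal ingredient; everything else is bookkeeping, including checking that the recursion really exhausts $K$ and that the two invariants are maintained throughout.
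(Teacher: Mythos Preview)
Your approach is genuinely different from the paper's. The paper extends $\delta$ across a transcendental $t$ by the \emph{multiplicative} choice $\delta(t)=\gamma t$ for a suitable $\gamma\in k$, and rules out bad $\gamma$'s by embedding into a differentially closed field and invoking Kolchin's theorem that the set of $\gamma$ for which $Dx=\gamma x$ has a nonzero solution in a fixed finite-transcendence-degree extension of the constants is a finite-rank additive group. You instead make the \emph{additive} choice $d(a)=g$ with $g\in E\setminus d(E)$, resting on the non-surjectivity of a nonzero derivation on a function field over its constants --- a fact you rightly locate in the residue\,/\,de Rham circle of ideas. When it applies, your constants computation (degree in $a$ and unique factorisation in $E[a]$, forcing $mg\in d(E)$ from the leading coefficient of a monic $Q$) is correct and pleasantly elementary, and avoids the model theory entirely.

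There is, however, a genuine gap at your ``initial step''. Your transcendental-step computation needs $d(E)\subseteq E$: only then is $P^d\in E[x]$, so that $d(P(a))=P^d(a)+P'(a)g$ is a polynomial in $a$ \emph{over $E$} and the degree/UFD argument in $E[a]$ goes through. But at the outset $\delta(F)\not\subseteq F$, and when you try to extend $\delta$ across an element $t\in\delta(F)$ transcendental over $F$, the coefficients $\delta(p_i)$ of $P^\delta$ lie only in $K$; worse, $t$ itself lies in the subfield they generate, so the equation $Q(t)\,\delta P(t)=P(t)\,\delta Q(t)$ is merely a numerical identity in $K$, not a polynomial identity in the variable $t$, and your degree analysis collapses. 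Invoking ``the steps below'' to bootstrap $(F,\delta)$ into a differential field is therefore circular. Separately, the sentence ``$E$ and $E^d$ have the same transcendence degree over $F^\delta$'' is not what you mean: your invariant gives $E^d$ \emph{algebraic} over $F^\delta$, while $E$ can be much larger; what you actually need is that $E$ has finite transcendence degree over $E^d$, which follows once $K$ is finitely generated over $k$ --- a reduction you assert but do not justify. The paper's multiplicative mechanism sidesteps the initial-step difficulty because its constants analysis proceeds via the embedding in $(\mathcal U,D)$ and Kolchin's bound rather than through $E[a]$.
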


\begin{proof}
Suppose $t\in K$ is transcendental over $F$.
For each $\gamma\in k$, consider the derivation $\delta_\gamma:F(t)\to K$ induced by $\delta_\gamma(t)=\gamma t$.
We claim that for some $\gamma\in k$, $F(t)^{\delta_\gamma}=F^\delta$.
Fix a sufficiently saturated differentially closed field $(\mathcal U,D)$ with constant field $\mathcal C$.
Each extension $\delta_\gamma:F(t)\to K$ embeds into $(\mathcal U,D)$.
Moreover, as these extensions all agree on $F$, we may as well assume that $F\subseteq\mathcal U$ and that these embeddings are over $F$.
So we get elements $t_\gamma\in\mathcal U$ and subfields $K_\gamma\subseteq\mathcal U$, such that $(F(t),K,\delta_\gamma)$ is isomorphic to $(F(t_\gamma),K_\gamma,D\upharpoonright_{F(t_\gamma)})$.
In particular, $D(t_\gamma)=\gamma t_\gamma$.
Now, if $F(t)^{\delta_\gamma}\neq F^\delta$, and we let $g\in F(t)^{\delta_\gamma}\setminus F$, then the image of $g$ in $(\mathcal U,D)$ is an element $h\in F(t_\gamma)^D\setminus F$.
It follows by Steinitz exchange that $t_\gamma\in F(h)^{\alg}$.
Writing the function field $F$ as $k(a)$, we have that $F(h)^{\alg}\subseteq\mathcal C(a)^{\alg}$.
That is, $t_\gamma$ is a solution to the equation $D x=\gamma x$ in $\mathcal C(a)^{\alg}$.
But the set of $\gamma\in \mathcal C$ such that $D x=\gamma x$ has a solution in fixed finite transcendence degree extension of $\mathcal C$ -- such as $\mathcal C(a)^{\alg}$ is -- forms a finite rank additive subgroup of $\mathcal C$.
This is an old result of Kolchin~\cite{kolchin68}, but see also~\cite[Fact 4.3]{freitag-moosa}.
Hence there must exist $\gamma\in k$ for which $D x=\gamma x$ has no solution in $\mathcal C(a)^{\alg}$.
For such a $\gamma$, $F(t)^{\delta_\gamma}=F^\delta$, as desired.

So, iterating this process, if we let $E$ be a transcendence basis for $K$ over $F$, then we can find an extension of $\delta$, which we will also call $\delta$, to $F(E)$ with no new constants.
Since $K$ is algebraic over $F(E)$ there is by Fact~\ref{differentiatepoly} a unique further extension of $\delta$ to $K$.
By Lemma~\ref{constantalgebraic}, $K^\delta$ is algebraic over $F(E)^\delta=F^\delta$.
\end{proof}

As discussed in the introduction, a special case of our main theorem is the finite-dimensional case of the Jouanalou-Hrushovski theorem on $D$-subvarieties of codimension one.
An algebraic proof of this finiteness theorem in the context of several derivations was given in~\cite{bllsm}.
We will rely on the following refinement of that result in the case of a single derivation.

\begin{proposition}
\label{jou}
Suppose $k$ is a finitely generated field, $A$ is a finitely generated $k$-algebra that is an integral domain, and $\delta:A\to A$ is a $k$-linear derivation.
Suppose that there exists an infinite sequence $(r_j:j<\omega)$ in $\Frac(A)$ such that $\delta(r_j)/r_j\in A$ for all $j<\omega$, and such that $(r_j:j<\omega)$ is multiplicatively independent modulo $(k^{\alg})^\times$.
Then there exists $g\in\Frac(A)^\delta\setminus k^{\alg}$.
In fact, if $G$ is the multiplicative group generated by $(r_j:j<\omega)$, then $G\cap\Frac(A)^\delta$ is nontrivial.
\end{proposition}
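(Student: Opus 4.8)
The plan is to prove the stronger final assertion directly, since it implies the rest: if $1\neq r\in G\cap\Frac(A)^{\delta}$ then $r$ is a nontrivial monomial in the $r_j$, so $r\notin(k^{\alg})^{\times}$ by multiplicative independence, hence $r\notin k^{\alg}$, and $g:=r$ witnesses the first conclusion. Write $L:=\Frac(A)$ and let $\ell\delta\colon L^{\times}\to(L,+)$ be the logarithmic derivative $r\mapsto\delta(r)/r$; this is a group homomorphism with kernel $(L^{\delta})^{\times}$, and by hypothesis $a_j:=\ell\delta(r_j)\in A$ for every $j$. I would run a descent: at each stage use the unrefined Jouanolou--Hrushovski input \cite[Proposition~6.10]{bllsm} to produce one new constant, enlarge the ground field by that constant (keeping everything finitely generated), and note that $\operatorname{trdeg}$ of $L$ over the ground field strictly decreases. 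Since this transcendence degree is finite the descent must terminate; and it can only terminate because the hypothesis of \cite[Proposition~6.10]{bllsm} fails over the current ground field, which is exactly the statement that some nontrivial monomial in the $r_j$ is algebraic over a field of constants, hence --- by Lemma~\ref{constantalgebraic} --- is itself a constant, and it visibly lies in $G$.

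Concretely, I would build recursively finitely generated fields $k=k_0\subseteq k_1\subseteq\cdots\subseteq L$ with each $k_i\subseteq L^{\delta}$, and finitely generated domains $A=A_0\subseteq A_1\subseteq\cdots\subseteq L$ with $\Frac(A_i)=L$, each $A_i$ a $k_i$-algebra with $\delta(A_i)\subseteq A_i$. Given $(k_i,A_i)$: if $(r_j:j<\omega)$ is multiplicatively independent modulo $(k_i^{\alg})^{\times}$, then all hypotheses of \cite[Proposition~6.10]{bllsm} hold for the $k_i$-algebra $A_i$ with the derivation $\delta$ and the sequence $(r_j)$ --- here one uses $a_j\in A_0\subseteq A_i$ --- so there is $g_{i+1}\in L^{\delta}\setminus k_i^{\alg}$, necessarily transcendental over $k_i$. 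Put $k_{i+1}:=k_i(g_{i+1})^{\alg}\cap L$: this is again finitely generated (an intermediate field of the finitely generated extension $L/k_i(g_{i+1})$), it lies in $L^{\delta}$ because $k_i(g_{i+1})\subseteq L^{\delta}$ and $L^{\delta}$ is relatively algebraically closed in $L$ (Lemma~\ref{constantalgebraic}), and $\operatorname{trdeg}(L/k_{i+1})=\operatorname{trdeg}(L/k_i)-1$. Finally localise $A_i$ to absorb finitely many generators of $k_{i+1}$ over $k_i$ (invert the relevant denominators and then the nonzero elements of the polynomial ring they generate over $k_i$) to get a finitely generated $k_{i+1}$-subalgebra of $L$ with fraction field $L$ containing $A_i$, and apply Lemma~\ref{deltafacts} to enlarge it to a $\delta$-stable such algebra $A_{i+1}$.

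The descent stops: $(\operatorname{trdeg}(L/k_i))_i$ is a strictly decreasing sequence of non-negative integers, so at some stage $(r_j)$ fails to be multiplicatively independent modulo $(k_i^{\alg})^{\times}$; that is, $r:=\prod_j r_j^{n_j}\in(k_i^{\alg})^{\times}$ for some finitely supported integer tuple $(n_j)\neq 0$. Then $r\neq 1$ (since $1\in(k^{\alg})^{\times}$ while the original hypothesis forces $r\notin(k^{\alg})^{\times}$), and $r$ is algebraic over $k_i\subseteq L^{\delta}$, so $r\in L^{\delta}$ by Lemma~\ref{constantalgebraic}. Thus $1\neq r\in G\cap\Frac(A)^{\delta}$, which is the final assertion, and as noted $r\notin k^{\alg}$ yields the first.

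I expect the re-basing step to be the main point requiring care: after adjoining the new constant one must still exhibit a finitely generated $k_{i+1}$-algebra $A_{i+1}\subseteq L$ with fraction field $L$, stable under $\delta$, and containing the previous $A_i$ (so containing the fixed $a_j$). This is routine commutative algebra together with Lemma~\ref{deltafacts}, but one should check in particular that $k_i(g_{i+1})^{\alg}\cap L$ is finitely generated over $k_i$ and that the localisations used do not alter the fraction field. One should also confirm that \cite[Proposition~6.10]{bllsm} is available in precisely the needed generality --- finitely generated ground field, finitely generated domain, a single derivation --- with any residual hypotheses there (such as normality of the algebra) arranged by standard reductions at the outset.
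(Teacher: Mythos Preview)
Your descent framework for deducing the ``in fact'' clause from repeated applications of the main conclusion is sound, and is close in spirit to what the paper does (the paper applies the main statement once over the full constant field $F=\Frac(A)^\delta$ rather than adjoining one constant at a time, but the idea is the same). The genuine gap is in your invocation of \cite[Proposition~6.10]{bllsm}. As the paper notes explicitly, that result is stated over an \emph{uncountable algebraically closed} ground field and with the sequence lying in $A$ itself, not merely in $\Frac(A)$. Your proposal assumes it applies directly over a finitely generated $k_i$ with $r_j\in\Frac(A)$; you flag this at the end as something to ``confirm,'' but it is not so. Bridging precisely this gap is the actual content of the present proposition, so what you have written is, in effect, a reduction of the proposition to its own main clause.

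The substance you are missing is what the paper's proof carries out: localise $A$ to a UFD (using that $k$ is finitely generated); replace $k$ by its relative algebraic closure in $\Frac(A)$; write each $r_j=c_j/d_j$ with $c_j,d_j$ coprime in the UFD $A$ and check that $\delta(c_j)/c_j,\,\delta(d_j)/d_j\in A$, so one may take the sequence to lie in $A$; base-change to an uncountable algebraically closed $K\supseteq k$, where \cite[Proposition~6.10]{bllsm} genuinely applies and yields $g\in\Frac(A_K)^\delta\setminus K$; and finally descend back to $k$ via model-completeness of algebraically closed fields (or a specialisation argument) together with a Galois-conjugates trick. None of these steps is individually deep, but together they are the proof, and your proposal bypasses them by citing the input in a generality it does not have.
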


\begin{remark}
By the sequence being {\em multiplicatively independent modulo $(k^{\alg})^\times$} we mean that its image in $\Frac(A)^\times/(\Frac(A)^\times\cap k^{\alg})$ is multiplicatively independent.
In other words, no nontrivial product of integer powers of the $r_j$'s is in $k^{\alg}$.
\end{remark}

\begin{proof}
This is quite close to~\cite[Proposition~6.10]{bllsm}, but among the differences are that we are working over a finitely generated field rather than an uncountable algebraically closed field, and that the $r_j$ are coming from $\Frac(A)$ rather than from $A$ itself.
We have therefore something to do.

First, let us observe that we get the ``in fact" clause for free.
Indeed, letting $F:=\Frac(A)^\delta$, consider the finitely generated $F$-algebra $A'=FA$.
Then $F$ is again a finitely generated field, $\delta$ is an $F$-linear differential structure on $A'$, and $(r_j:j<\omega)$ in $\Frac(A')=\Frac(A)$ satisfies $\delta(r_j)/r_j\in A'$ for all $j<\omega$.
We can apply the main statement of the theorem -- which we are assuming we have proved -- to this context over $F$.
Since $\Frac(A')^\delta=F$, we must have that  $(r_j:j<\omega)$ is not multiplicatively independent modulo $F^{\alg}$.
But note that $F$ is relatively algebraically closed in $\Frac(A)$ by Lemma~\ref{constantalgebraic}.
So $(r_j:j<\omega)$ is not multiplicatively independent modulo $F$.
That is, some nontrivial product of integer powers of the $r_j$ is in $F$.
We have shown that $G\cap F$ is nontrivial, as desired.

So it suffices to prove that $\Frac(A)^\delta\not\subseteq k^{\alg}$.

Next, observe that we can always replace $A$ by a finitely generated localisation.
Indeed, this does not change the fraction field, and, since $\delta(\frac{1}{f})=-\frac{\delta f}{f^2}$, any such localisation is a differential subring of $\big(\Frac(A),\delta\big)$.
So we may assume that $A$ is integrally closed.
Moreover, as $k$ is a finitely generated field, some finitely generated localisation of $A$ is a unique factorisation domain -- this is by~\cite[Lemma~6.11]{bllsm} though one expects it to have appeared elsewhere and earlier.
So we may also assume that $A$ is a UFD.

Consider $k':=\Frac(A)\cap k^{\alg}$.
By integral closedness, $k'\subseteq A$.
Moroever, $\delta$ is $k'$-linear.
The hypotheses of the theorem hold for $(A,k')$, and  if the conclusion were true for $(A,k')$ then it would be true of $(A,k)$.
That is, it suffices to prove the theorem for $k'$ in place of $k$.
So we may also assume $\Frac(A)\cap k^{\alg}=k$.

Next, we move the $r_j$ into $A$ itself.
For each $j<\omega$, write $r_j=\frac{c_j}{d_j}$ where $c_j,d_j\in A$ are coprime.
Since $\frac{\delta(r_j)}{r_j}=\frac{\delta(c_j)}{c_j}-\frac{\delta(d_j)}{d_j}$, it follows that $\frac{\delta(c_j)d_j}{c_j}\in A$.
Coprimality of $c_j$ and $d_j$ in $A$ then implies $\frac{\delta(c_j)}{c_j}\in A$.
A symmetric argument shows that $\frac{\delta(d_j)}{d_j}\in A$.
Note that the multiplicative group generated by $\{c_j,d_j:j<\omega\}$ contains that generated by $\{r_j:j<\omega\}$, so the former must also have infinite rank modulo its intersection with $k^{\alg}$.
We can therefore find in $A$ a sequence $(a_j:j<\omega)$ that is multiplicatively independent modulo $(k^{\alg})^\times$ and such that $\frac{\delta(a_j)}{a_j}\in A$ for all $j<\omega$.

Let $K$ be an uncountable algebraically closed field extending $k$.
It follows that $A_K:=A\otimes_kK$ is an integrally closed domain extending~$A$, finitely generated over $K$, and with the property that $\Frac(A)\cap K=k$ in $\Frac(A_K)$.
Hence, no nontrivial product of integer powers of the $a_j$ is in~$K$ either.
Moreover $\delta$ extends to a $K$-linear derivation on $\Frac(A_K)$ and $\frac{\delta(a_j)}{a_j}\in A_K$ for all $j<\omega$.
Proposition~6.10 of~\cite{bllsm} now applies and we obtain an element $g\in \Frac(A_K)^\delta\setminus K$.

At this point we can use a specialisation argument, or, as we prefer to do, the model-completeness of the first order theory of algebraically closed fields, to see that $\Frac(A_{k^{\alg}})^\delta\setminus k^{\alg}\neq\emptyset$ where $A_{k^{\alg}}:=A\otimes_kk^{\alg}$.
Indeed, letting $(X,s)$ be the $D$-variety associated to $(A,\delta)$, we denote by $(X_K,s)$ the base extension to~$K$, that is, the $D$-variety associated to $(A_K,\delta)$.
Hence, $g:(X_K,s)\to(\mathbb A_K^1,0)$ is a nonconstant rational map over $K$.
This is a first order expressible property over $k$ of the parameters in $K$ over which $g$ is defined.
As $k^{\alg}$ is an elementary substructure of $K$, we thus obtain nonconstant $g':(X_{k^{\alg}},s)\to(\mathbb A_{k^{\alg}}^1,0)$ over $k^{\alg}$.
That is, $g'\in\Frac(A_{k^{\alg}})^\delta\setminus k^{\alg}$.
Now, $\Frac(A_{k^{\alg}})^\delta$ is algebraic over $\Frac(A)^{\delta}$, so the canonical parameter for the finite set of Galois conjugates of $g'$ is a tuple from $\Frac(A)^\delta$ not all of whose co-ordinates can be in~$k^{\alg}$ since $g$ isn't.
Hence $\Frac(A)^\delta\setminus k^{\alg}\neq\emptyset$.
\end{proof}

\bigskip
\section{The principal algebraic statement}
\label{section-algebra}

\noindent
The key step in our proof of Theorem~\ref{main} will be the following statement in commutative algebra.
It is given here in slightly greater generality than necessary; we will only apply it in the case that the nilradical of $S$ is prime, and the reader is invited to make this assumption, and thereby remove a few of the technicalities, if he or she desires.

\begin{theorem}
\label{main-algebra}
Suppose
\begin{itemize}
\item
$k$ is a finitely generated field of characteristic zero,
\item
$R$ is a finitely generated $k$-algebra that is an integral domain and such that $k$ is relatively algebraically closed in $\Frac(R)$,
\item
$S$ is a finitely generated $k$-algebra, such that $k$ is relatively algebraically closed in $\Frac(S/P)$ for every minimal prime ideal $P$ of $S$, and
\item
$f_1,f_2:R\to S$ are $k$-algebra homomorphisms that take nonzero elements of $R$ to regular elements -- that is, non zero-divisors --  of $S$.
\end{itemize}
Suppose there exists a sequence of nonzero elements $(a_j:j<\omega)$ in $R$ that is multiplicatively independent modulo $k^\times$, and such that $f_1(a_j)S=f_2(a_j)S$ for all $j<\omega$.
Then there exists $g\in\Frac(R)\setminus k$ such that $f_1(g)=f_2(g)$.

In fact, if we let $F$ be the subfield of $\Frac(R)$ on which $f_1$ and $f_2$ agree, and we let $G$ be the subgroup of $\Frac(R)^\times$ generated by $(a_j:j<\omega)$, then $G\cap F$ is nontrivial.
\end{theorem}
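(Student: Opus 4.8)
\emph{The plan.} I aim to show the homomorphism $\chi\colon G\to\Frac(S)^\times$, $g\mapsto f_1(g)/f_2(g)$, is not injective. This is well defined because $f_1,f_2$ carry nonzero elements of $R$ to non-zero-divisors of $S$; and on the generators $f_1(a_j)S=f_2(a_j)S$ forces $f_1(a_j)/f_2(a_j)\in S^\times$. Since $\ker\chi=G\cap F$ and $G\cap k^\times=\{1\}$ by multiplicative independence, non-injectivity yields both the main statement and the ``in fact'' clause. After replacing $R,S$ by finitely generated localisations -- checking that the hypotheses survive, in particular that $f_1,f_2$ still send nonzero elements to non-zero-divisors -- I may assume $R$, and each $S/P$ for $P$ a minimal prime, are normal. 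The two finiteness tools that power the argument are then: Rosenlicht's theorem, giving that $\mathcal O(R)^\times/k^\times$ and $\mathcal O(S/P)^\times/k^\times$ are finitely generated (this uses that $k$ is relatively algebraically closed in the fraction fields); and finite generation of the divisor class groups $\operatorname{Cl}(R)$ and $\operatorname{Cl}(S/P)$, which is where finite generation of $k$ is essential, via the Mordell--Weil--N\'eron theorem. Note that modulo a minimal prime the $f_i$ are dominant, so induce pullback maps $\phi_i^*$ on divisors compatible with $\divisor$, and the hypothesis says $\phi_1^*\divisor(a_j)=\phi_2^*\divisor(a_j)$. The argument now splits.

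\emph{The reduced case} ($S$ a domain) follows the approach of \cite{BRS}. Composing $\chi$ with $S^\times\to S^\times/k^\times$ and using that this group is finitely generated, one cuts $G$ down to an infinite-rank subgroup $G_1\le G$ with $\chi(G_1)\subseteq k^\times$; thus for $g\in G_1$ one has $f_1(g)=c_g\,f_2(g)$ with $c_g\in k^\times$. It remains to remove the constant, and this is the heart of the reduced case: if some $c_g$ is a nontrivial root of unity we raise $g$ to a suitable power and are done, while otherwise one passes to a normal projective model of $X$, realises $g$ as a morphism to $\mathbb P^1$ and descends the correspondence $(\phi_1,\phi_2)$ along it, where finite generation of the class group of the model together with an invariance argument for the induced correspondence on $\mathbb P^1$ forces the constant to be trivial for some nontrivial $g\in G_1$ -- giving $G\cap F\neq\{1\}$.

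\emph{The non-reduced case} builds on the reduced one. Writing $N$ for the nilradical of $S$, first apply the reduced case to $\bar f_1,\bar f_2\colon R\to S_{\red}$ (the hypotheses hold, and $\bar f_1(a_j)S_{\red}=\bar f_2(a_j)S_{\red}$) to obtain an infinite-rank $G_1\le G$ on which $\bar f_1=\bar f_2$, i.e.\ $f_1\equiv f_2\pmod N$ on $G_1$. Reducing, by a filtration of $N$, to the case $N^2=0$ and $N\cong S_{\red}$ free of rank one, the map $f_1-f_2$ takes values in $N\Frac(S)\cong\Frac(S_{\red})$ on the equaliser field $\bar F$ of $\bar f_1,\bar f_2$, giving a $k$-linear $\bar f_2$-derivation $\delta\colon\bar F\to\Frac(S_{\red})$; and since $f_1(g)/f_2(g)\in 1+N$ for $g\in G_1$, the hypothesis becomes $\delta(g)/\bar f_2(g)\in S_{\red}$ for the $\bar f_2$-images of a set of generators of $G_1$. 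Extend $\delta$ to a differential field structure on $\Frac(S_{\red})$ with constants algebraic over those of $\bar F$ by Lemma~\ref{extend}, then to a finitely generated $\delta$-stable domain $A\subseteq\Frac(S_{\red})$ containing $S_{\red}$ by Lemma~\ref{deltafacts}. Proposition~\ref{jou} applies -- the multiplicative independence modulo $(k^{\alg})^\times$ of the $\bar f_2$-images follows from that of the original tuple together with $k$ being relatively algebraically closed in $\Frac(S_{\red})$ -- and produces a nontrivial $g\in G_1$ with $\delta(g)=0$, that is $f_1(g)=f_2(g)$.

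\emph{Main obstacle.} In the reduced case the genuinely non-formal step is the elimination of the multiplicative constant (the descent-along-$g$ argument, following \cite{BRS}, and the attendant class-group input). In the non-reduced case the work is in reducing to a square-zero, rank-one nil-thickening and in checking the hypotheses of Proposition~\ref{jou} downstairs, especially multiplicative independence modulo $(k^{\alg})^\times$. In all cases one must keep verifying that the finitely generated localisations preserve the standing hypotheses -- in particular that $f_1,f_2$ still carry nonzero elements of $R$ to non-zero-divisors.
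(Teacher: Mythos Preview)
Your architecture matches the paper's --- treat the reduced case first, then in the non-reduced case build a derivation out of $f_1-f_2$ and apply Proposition~\ref{jou} --- but two key steps are genuinely incomplete.

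In the reduced case the entire content lies in eliminating the constant $c_g\in k^\times$, and your sketch (root-of-unity case; otherwise ``descend along $g$ to $\mathbb P^1$, use finite generation of the class group of a projective model'') does not constitute an argument: since $k^\times$ itself has infinite rank there is no group-theoretic obstruction to $g\mapsto c_g$ being injective on $G_1$, and it is unclear what the class group of a compactification of $X$ has to do with this. The paper's device is different and purely algebraic: once you have produced (from disjoint blocks of the $a_j$) elements $h_1,h_2,\dots\in G$ with $f_1(h_r)=\lambda_r f_2(h_r)$ and $\lambda_r\in k^\times$, take $m>\operatorname{trdeg}_k\Frac(R)$, write a polynomial relation $\sum c_Ih^I=0$ over $k$ among $h_1,\dots,h_m$ with the fewest nonzero monomials, and apply $f_1$ and $f_2$; minimality forces the weights $\lambda^I$ to coincide on all monomials with $c_I\ne0$, so the ratio $g=h^{I-J}$ of two such monomials satisfies $f_1(g)=f_2(g)$. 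No class group enters. In the non-reduced case your reduction ``by a filtration of $N$'' to $N^2=0$ with $N\cong S_{\red}$ free of rank one is not valid: even when $N^2=0$ the $S_{\red}$-module $N$ need not be cyclic (let alone free), and a d\'evissage along a general filtration does not obviously propagate the conclusion. The paper instead selects a single nonzero $x\in N$ with maximal annihilator (so $x^2=0$ and $Q:=\operatorname{ann}(x)$ is prime) and works modulo $Q$ rather than modulo $N$; a separate Noetherian induction on ideals $I\le N$ (Reduction~\ref{maxass}) guarantees in advance that $G/(G\cap F_{(x)})$ has finite rank, and a localisation (Reduction~\ref{xass}) arranges that the primary components of $(x)$ contain no regular elements --- this last point is exactly what is needed to verify $\delta(b_j)/b_j\in S/Q$, the hypothesis of Proposition~\ref{jou}.
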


\begin{remark}
\begin{itemize}
\item[(a)]
The assumptions on $f_1$ and $f_2$ imply that they extend uniquely to embeddings of $\Frac(R)$ into $\Frac(S)$, where by $\Frac(S)$ we mean the localisation of $S$ at the set of all regular elements.
It is with respect to these extensions that we mean $f_1(g)=f_2(g)$ in the conclusion of the theorem.
\item[(b)]
Any nontrivial element of $G$ is necessarily transcendental over $k$; this follows from the multiplicative independence of $(a_j:j<\omega)$ modulo $k^\times$ together with the fact that $k$ is relatively algebraically closed in $\Frac(R)$.
\end{itemize}
\end{remark}

\begin{proof}
Let us first consider the case when $S$ is a reduced ring.

Since $ f_1(a_j)S= f_2(a_j)S$, for each $j<\omega$ 
there is a unit $u_j$ in $S$ such that $f_1(a_j)=u_j f_2(a_j)$.
Let $P_1,\dots,P_\ell$ be the minimal primes of $S$, and denote by $S_\mu:=S/P_\mu$ the corresponding integral domain for each $\mu=1,\dots,\ell$.
Let
$$\bar u_j:=(u_j+P_1,\dots,u_j+P_\ell)\in S_1^\times\times\cdots\times S_\ell^\times.$$
Now, as $k$ is relatively algebraically closed in $S_\mu$, each $S_\mu^\times/k^\times$ is a finitely generated group -- see, for example, \cite[Corollary~2.7.3]{Lang}.
Hence, $S_1^\times/k^\times\times\cdots\times S_\ell^\times/k^\times$ is finitely generated.
It follows that for some $N>0$ and all $r>0$, 
$$\bar u_{(r-1)N+1}^{k_{r,1}}\cdot \bar u_{(r-1)N+2}^{k_{r,2}}\cdots \bar u_{rN}^{k_{r,N}}=\lambda_r$$
for some $\lambda_r=(\lambda_{r,1},\dots,\lambda_{r,\ell})\in (k^\times)^\ell$ and some $k_{r,1},\dots,k_{r,N}\in\mathbb Z$ not all zero.
(Note that if you assumed in the theorem that the nilradical of $S$ was prime then here $S$ is already a domain and $\ell=1$ with $P_1=(0)$.)

Let $f_{1,\mu},f_{2,\mu}:R\to S_\mu$ be the $k$-algebra homomorphisms induced by $f_1$ and $f_2$ for $\mu=1,\dots,\ell$.
By assumption, they are still injective.
Consider, for each $r>0$,
$$h_r:=a_{(r-1)N+1}^{k_{r,1}}\cdot a_{(r-1)N+2}^{k_{r,2}}\cdots a_{rN}^{k_{r,N}}$$
in $\Frac(R)$.
By construction $f_{1,\mu}(h_r)=\lambda_{r,\mu} f_{2,\mu}(h_r)$.

Letting $m$ be greater than the transcendence degree of $\Frac(R)$ over $k$, we get that $\{h_1,\dots,h_m\}$ is algebraically dependent over $k$.
Let $\sum c_{i_1,\dots,i_m}h_1^{i_1}\cdots h_m^{i_m}=0$ be a nontrivial algebraic relation over $k$ with a minimal number of nonzero coefficients.
Note that as none of the $h_r$ are zero, there are at least two nonzero coefficients in this relation.
Fixing  $\mu=1,\dots,\ell$ and applying $f_{1,\mu}$ to this we get
$$\sum c_{i_1,\dots,i_m}\lambda_{1,\mu}^{i_1}\cdots\lambda_{m,\mu}^{i_m}f_{2,\mu}(h_1)^{i_1}\cdots f_{2,\mu}(h_m)^{i_m}=0$$
while applying $f_{2,\mu}$ yields
$$\sum c_{i_1,\dots,i_m}f_{2,\mu}(h_1)^{i_1}\cdots f_{2,\mu}(h_m)^{i_m}=0.$$
Suppose that $\lambda_{1,\mu}^{i_1}\cdots\lambda_{m,\mu}^{i_m}\neq\lambda_{1,\mu}^{j_1}\cdots\lambda_{m,\mu}^{j_m}$ for some distinct (nonzero) $c_{i_1,\dots,i_m}$ and $c_{j_1,\dots,j_m}$.
Manipulating these two equations and then taking $f_{2,\mu}^{-1}$ we would get a relation among the $h_1,\dots, h_m$ with fewer nonzero coefficients.
As this is impossible by minimality, it must be that $\lambda_{1,\mu}^{i_1}\cdots\lambda_{m,\mu}^{i_m}=\lambda_{1,\mu}^{j_1}\cdots\lambda_{m,\mu}^{j_m}$ whenever $c_{i_1,\dots,i_m}$ and $c_{j_1,\dots,j_m}$ are nonzero.
But then, fixing $(i_1,\dots,i_m)\neq(j_1,\dots,j_m)$ with $c_{i_1,\dots,i_m}$ and $c_{j_1,\dots,j_m}$ nonzero, we get that
$$f_{1,\mu}(h_1^{i_1-j_1}\cdots h_m^{i_m-j_m})=f_{2,\mu}(h_1^{i_1-j_1}\cdots h_m^{i_m-j_m}).$$
Setting $g:=h_1^{i_1-j_1}\cdots h_m^{i_m-j_m}$ we have that $f_{1,\mu}(g)=f_{2,\mu}(g)$ for all $\mu=1,\dots,\ell$.
Since $S$ is reduced, $P_1\cap\dots\cap P_\ell=(0)$, and hence $f_1(g)=f_2(g)$.
That is, $g\in G\cap F$.
It remains only to verify that $g\neq 1$.
Since $i_r-j_r\neq 0$ for some $r=1,\dots,m$, and the corresponding $k_{r,1},\dots,k_{r,N}$ are not all zero, $g$ is a nontrivial product of integer powers of the $a_i$.
By multiplicative independence modulo $k^\times$, $g\neq 1$.

Now we deal with the case when the nilradical $N$ of $S$ is nontrivial.
For any ideal $I\leq N$, let $f^I_\nu:R\to S/I$ be the composition of $f_\nu$ with $S\to S/I$.
Notice that since $I\leq N$ the image of a regular element in $S$ remains regular in $S/I$.
So $f^I_1, f^I_2$ extend to embeddings of $\Frac(R)$ in $\Frac(S/I)$.
Consider the subfield
$$F_I:=\{g\in \Frac(R): f^I_1(g)= f^I_2(g)\}.$$
Note that $F=F_{(0)}$.
Note also that $G/G\cap F_N$ is of finite rank.
Indeed, otherwise we would have a subsequence $(b_j:j<\omega)$ of $(a_j:j<\omega)$ that is multiplicatively independent modulo $F_N$.
But the reduced case applied to this subsequence, which is {\em a fortiori} multiplicatively independent modulo $k^\times$, implies that $\langle b_j:j<\omega\rangle\cap F_N$ is nontrivial, contradicting the multiplicative independence modulo $F_N$.


\begin{reduction}
\label{maxass}
It suffices to prove the theorem under the assumption that for any nonzero ideal $I\leq N$, $G/(G\cap F_I)$ is of finite rank.
\end{reduction}

\begin{proof}[Proof of Reduction~\ref{maxass}]
Assume we have proven the theorem under this additional condition.
Set $\mathcal I:=\{I\leq N:G/(G\cap F_I)\text{ is of infinite rank}\}$.
Suppose, toward a contradiction, that $\mathcal I$ is nonempty.
By noetherianity there is a maximal element $J\in\mathcal I$.
Let $(b_j:j<\omega)$ be a subsequence of $(a_i:i<\omega)$ that is multiplicatively independent modulo $F_J$.
Notice that the assumptions of the theorem remain true of $f^J_1,f^J_2$ and $(b_\ell:\ell<\omega)$.
Moreover, the condition of the claim is true of $f^J_1,f^J_2$ and $(b_\ell:\ell<\omega)$ by maximal choice of $J$.
Applying the theorem to $f^J_1,f^J_2$ and $(b_\ell:\ell<\omega)$, we get that $\langle b_\ell:\ell<\omega\rangle\cap F_J$ is nontrivial.
But this contradicts the multiplicative independence of $(b_\ell:\ell<\omega)$ over $F_J$.

So $\mathcal I$ is empty.
In particular, $G/(G\cap F)$ is of finite rank.
But then certainly $G\cap F$ is nontrivial, as desired.
\end{proof}

\begin{reduction}
\label{xass}
It suffices to prove the theorem under the additional assumption that there exists a nonzero $x\in N$ such that $x^2=0$, $Q:=\operatorname{ann}(x)$ is prime, and if $(x)=Q_1\cap\cdots\cap Q_\ell$ is the primary decomposition of $(x)$ then $Q_1\cup\cdots\cup Q_\ell$ contains no regular element of $S$.
\end{reduction}

\begin{proof}[Proof of Reduction~\ref{xass}]
As $N$ is not trivial, let $x\in N$ be a nonzero element with maximal annihilator.
Then $x^2=0$ and primality of $Q$ follow.
For each of the finitely many ideals in the primary decomposition of $(x)$ that contain a regular element, choose one. 
Let $S'$ be the extension of $S$ obtained by inverting the product of these finitely many regular elements.
Then $S'$ is still finitely generated and we can now apply the theorem with $S'$ in place of $S$, noting that $x$ is still nonzero in $S'$, $x^2=0$ remains true, $\operatorname{ann}_{S'}(x)=QS'$ is still prime, and now the primary ideals appearing in the decomposition of $xS'$ all have no regular elements.
We therefore obtain nonconstant $g\in \Frac(R)$ such that $f_1(g)=f_2(g)$ in $\Frac(S')=\Frac(S)$.
\end{proof}

Let $x$ be as in Reduction~\ref{xass}.
By Reduction~\ref{maxass}, we have that $G/(G\cap F_{(x)})$ is of finite rank.
There must exist a sequence $(b_j:j<\omega)$ in $G\cap F_{(x)}$ that is multiplicatively independent modulo $k^\times$.
Being in $G$ implies that each $b_j$ is product of integer powers of some $a_i$'s.
Since $f_1(a_i)S=f_2(a_i)S$ we have that $f_1(a_i)$ is a multiple of $f_2(a_i)$ by a unit in $S^\times$.
The same is therefore true of $b_j$.
That is,
\begin{equation}
\label{ratiounit}
f_1(b_j)=u_jf_2(b_j)\ \text{ for some unit }u_j\in S^\times\ \ \text{ for all }j<\omega.
\end{equation}
On the other hand, being in $F_{(x)}$ means that
\begin{equation}
\label{inx}
f_1(b_j)-f_2(b_j)\in x\Frac(S)\ \ \text{ for all }j<\omega.
\end{equation}
Let $T:=k[(b_j:j<\omega)]$ be the $k$-subalgebra of $\Frac(R)$ generated by these elements, and consider the $k$-linear map $f_1-f_2$ restricted to $T$.
Using the fact that the $f_i$ are ring homomorphisms, and developing, one obtains the
the following twisted Leibniz rule:
\begin{equation}
\label{leibniz}
(f_1-f_2)(uv)=(f_1-f_2)(u)f_1(v)+f_2(u)(f_1-f_2)(v).
\end{equation}
Moreover,
\begin{equation}
\label{imageinx}
(f_1-f_2)(T)\subseteq x\Frac(S).
\end{equation}
Indeed, equation~(\ref{inx}) tells us that $(f_1-f_2)$ takes the generators of $T$ into $x\Frac(S)$, and this property is clearly linear.
So assuming that $(f_1-f_2)(u), (f_1-f_2)(v)\in x\Frac(S)$, it suffices to show that $(f_1-f_2)(uv)\in x\Frac(S)$.
This follows immediately from~(\ref{leibniz}) above.

Let $\pi_Q:\Frac(S)\to\Frac(S/Q)$ be induced by the quotient $S\to S/Q$.
Then $\pi_Q\circ f_1,\pi_Q\circ f_2:\Frac(R)\to\Frac(S/Q)$ are embeddings of fields.
Since $x^2=0$, we have $x\in Q$, and so equation~(\ref{imageinx}) tells us that $\pi_Q\circ f_1$ and $\pi_Q\circ f_2$ agree on $T$.
We use this embedding to view $T\subseteq\Frac(S/Q)$.

\begin{claim}
\label{mu}
There exists a $k$-linear derivation $\delta$ on $\Frac(S/Q)$ satisfying:
\begin{itemize}
\item[(i)]
For all $t\in T$ and $s\in\Frac(S)$, $\pi_Q(s)=\delta(t)$ if and only if $f_1(t)-f_2(t)=xs$.
\item[(ii)]
The constant field $\Frac(S/Q)^\delta$ is algebraic over $\Frac(T)^\delta$.
\item[(iii)]$\Frac(T)^\delta\subseteq F$.
\end{itemize}
\end{claim}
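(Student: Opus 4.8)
The plan is to define $\delta$ first on $T$, using multiplication by $x$ in $\Frac(S)$ as an infinitesimal device, and then to extend it to all of $\Frac(S/Q)$. The observation that makes everything work is the identification
$$\ker\pi_Q \;=\; Q\cdot\Frac(S) \;=\; \operatorname{ann}_{\Frac(S)}(x),$$
where the first equality holds because $\pi_Q$ is the localisation of $S\to S/Q$ and no regular element of $S$ lies in $Q$ (such an element would annihilate the nonzero $x$), and for the second, $x\cdot(a/b)=0$ with $b$ regular forces $xa=0$ in $S$, i.e. $a\in\operatorname{ann}(x)=Q$. Now for $t\in T$, equation~(\ref{imageinx}) lets me pick $s\in\Frac(S)$ with $f_1(t)-f_2(t)=xs$, and I set $\delta(t):=\pi_Q(s)$; this is independent of the choice of $s$ because two such witnesses differ by an element of $\operatorname{ann}_{\Frac(S)}(x)=\ker\pi_Q$. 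Property~(i) is then essentially built in: both implications reduce to this same fact that two witnesses for a given $t$ differ by an element of $\ker\pi_Q$.

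Next I would verify that $\delta:T\to\Frac(S/Q)$ is a $k$-linear derivation. Additivity and $k$-linearity come for free from those of $f_1-f_2$. For the Leibniz rule I would use the twisted identity~(\ref{leibniz}): if $f_1(t)-f_2(t)=xs$ and $f_1(t')-f_2(t')=xs'$ then $f_1(tt')-f_2(tt')=x\big(sf_1(t')+f_2(t)s'\big)$, so
$$\delta(tt')=\pi_Q(s)\,\pi_Q(f_1(t'))+\pi_Q(f_2(t))\,\pi_Q(s').$$
Since $\pi_Q\circ f_1$ and $\pi_Q\circ f_2$ agree on $T$ and we have identified $t,t'\in T$ with their common images in $\Frac(S/Q)$, the right-hand side is exactly $\delta(t)t'+t\,\delta(t')$; the twisting dissolves precisely because $x\in Q$, which is where the hypothesis $x^2=0$ enters. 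The quotient rule then extends $\delta$ uniquely to a $k$-linear derivation $\delta:\Frac(T)\to\Frac(S/Q)$.

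Since $\Frac(T)$ is a subfield of the finitely generated field $\Frac(R)$, it is itself finitely generated over $k$, so $\Frac(T)/k$ is a function field extension and Lemma~\ref{extend} applies: it extends $\delta$ to a differential field structure on $K:=\Frac(S/Q)$ with $K^\delta$ algebraic over $\Frac(T)^\delta$ --- this is~(ii), and~(i) survives because $\delta$ is unchanged on $T$. For~(iii), take $g=p/q\in\Frac(T)^\delta$ with $p,q\in T$ and use~(i) to write $f_1(p)-f_2(p)=xs_p$, $f_1(q)-f_2(q)=xs_q$ with $\pi_Q(s_p)=\delta(p)$, $\pi_Q(s_q)=\delta(q)$. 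Substituting into $f_1(g)-f_2(g)=\big(f_1(p)f_2(q)-f_2(p)f_1(q)\big)\big/\big(f_1(q)f_2(q)\big)$ yields
$$f_1(g)-f_2(g)=x\cdot\frac{s_pf_2(q)-f_2(p)s_q}{f_1(q)f_2(q)},$$
and $\pi_Q$ of the fraction equals $\big(q\,\delta(p)-p\,\delta(q)\big)\big/q^2=\delta(g)=0$; hence the fraction lies in $\ker\pi_Q=\operatorname{ann}_{\Frac(S)}(x)$, so $f_1(g)=f_2(g)$, i.e. $g\in F$.

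The step I expect to be the main obstacle is the well-definedness of $\delta$ on $T$ together with the verification that it is a genuine, untwisted derivation; this is exactly where the hypotheses $x^2=0$ and $Q=\operatorname{ann}(x)$ prime are used, via the identification $\ker\pi_Q=\operatorname{ann}_{\Frac(S)}(x)$. Everything afterward --- extending by the quotient rule, applying Lemma~\ref{extend}, and the computation for~(iii) --- should be routine.
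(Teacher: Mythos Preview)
Your proposal is correct and follows essentially the same route as the paper: define $\delta$ on $T$ via $\delta(t):=\pi_Q(s)$ for any $s$ with $f_1(t)-f_2(t)=xs$, check well-definedness and the Leibniz rule through the identification $\ker\pi_Q=Q\Frac(S)=\operatorname{ann}_{\Frac(S)}(x)$, extend to $\Frac(T)$ by the quotient rule, then invoke Lemma~\ref{extend} for~(ii). Your verification of~(iii) is organised slightly differently --- you compute $f_1(g)-f_2(g)$ directly as $x$ times a fraction whose $\pi_Q$-image is $\delta(g)$, whereas the paper works with $f_1(v)s_u-f_1(u)s_v$ and multiplies by $x$ afterward --- but the two computations are equivalent.
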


\begin{proof}[Proof of Claim~\ref{mu}]
We first find a derivation $\delta:T\to\Frac(S/Q)$ with property~(i).
Given $t\in T$, we know by~(\ref{imageinx}) that $f_1(t)-f_2(t)=xs$ for some $s\in \Frac(S)$.
Now, for any $s'\in\Frac(S)$ we have
\begin{eqnarray*}
f_1(t)-f_2(t)=xs'
&\iff&
(s-s')x=0\\
&\iff&
s-s'\in Q\Frac(S)\ \ \text{ since $Q=\operatorname{ann}(x)$}\\
&\iff&
\pi_Q(s)=\pi_Q(s').
\end{eqnarray*}
So we can define $\delta(t):=\pi_Q(s)$, and it will have the desired property.
That $\delta$ is $k$-linear is clear from the construction.
That it is a derivation follows from~(\ref{leibniz}).
Indeed, given $u,v\in T$, let $s_u,s_v\in\Frac(S)$ be such that $(f_1-f_2)(u)=xs_u$ and $(f_1-f_2)(v)=xs_v$.
Then~(\ref{leibniz}) along with the construction of $\delta$ gives us:
\begin{eqnarray*}
\delta(uv)
&=&
\pi_Q\big(s_uf_1(v)+f_2(u)s_v)\\
&=&
\pi_Q(s_u)v+u\pi_Q(s_v)\ \ \ \ \ \text{ by our identification of $T\subseteq\Frac(S/Q)$}\\
&=&
\delta(u)v+u\delta(v)
\end{eqnarray*}
as desired.

Now, there is a unique extension of $\delta$ to $\Frac(T)$ using the usual quotient rule: $\delta\left(\frac{u}{v}\right):=\frac{v\delta u-u\delta v}{v^2}$.
Note that  $\Frac(T)$ is finitely generated over $k$ as it is a subextension of the finitely generated extension $\Frac(R)$. 
So we can apply Lemma~\ref{extend} and extend $\delta$ further to a derivation $\delta:\Frac(S/Q)\to\Frac(S/Q)$ whose constant field is algebraic over the constants in $\Frac(T)$.
That is, it satisfies property~(ii).

Finally we show~(iii).
Suppose $g=\frac{u}{v}\in\Frac(T)$ and $\delta(g)=0$.
This means $v\delta(u)=u\delta(v)$.
Letting $s_u,s_v\in\Frac(S)$ be such that $(f_1-f_2)(u)=xs_u$ and $(f_1-f_2)(v)=xs_v$, we have by~(i) that $\pi_Q(s_u)=\delta(u)$ and $\pi_Q(s_v)=\delta(v)$.
So, $\pi_Q\big(f_1(v)s_u-f_1(u)s_v\big)=v\delta(u)-u\delta(v)=0$ so that $f_1(v)s_u-f_1(u)s_v\in Q\Frac(S)$.
Hence
$0=\big(f_1(v)s_u-f_1(u)s_v\big)x=f_1(v)(f_1-f_2)(u)-f_1(u)(f_1-f_2)(v)$.
That is,
$\frac{(f_1-f_2)(u)}{f_1(u)}=\frac{(f_1-f_2)(v)}{f_1(v)}$, which implies that $1-\frac{f_2(u)}{f_1(u)}=1-\frac{f_2(v)}{f_1(v)}$, and so $\frac{f_1(u)}{f_1(v)}=\frac{f_2(u)}{f_2(v)}$.
That is, $f_1(g)=f_2(g)$, as desired.
\end{proof}

Let $M:=\{s\in\Frac(S):sx\in S\}$.
This is an $S$-submodule of $\Frac(S)$ that contains $Q\Frac(S)$.

\begin{claim}
\label{fg}
$\pi_Q(M)=S/Q$.
\end{claim}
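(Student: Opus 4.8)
The plan is to establish the nontrivial inclusion $\pi_Q(M)\subseteq S/Q$; the reverse inclusion is immediate from $S\subseteq M$, which gives $S/Q=\pi_Q(S)\subseteq\pi_Q(M)$. Write $\Sigma$ for the set of regular elements of $S$, so that $\Frac(S)=\Sigma^{-1}S$. The first point to record is that $Q=\operatorname{ann}(x)$ is $\Sigma$-saturated: if $b\in\Sigma$ and $bq\in Q$ then $bqx=0$, hence $qx=0$ and $q\in Q$. It follows that $\ker\pi_Q=Q\Frac(S)=\{a/b:a\in Q,\,b\in\Sigma\}$, and therefore that for $s\in\Frac(S)$ one has $\pi_Q(s)\in S/Q$ exactly when $s-t\in Q\Frac(S)$ for some $t\in S$. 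Consequently it is enough to show that $sx\in(x)$ for every $s\in M$: granting this, write $sx=tx$ with $t\in S$; then $(s-t)x=0$ in $\Frac(S)$, and clearing the denominator of $s-t$ and using that the elements of $\Sigma$ are non-zero-divisors gives $s-t\in Q\Frac(S)$, so $\pi_Q(s)=\pi_Q(t)\in S/Q$.

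Thus the task reduces to proving $Mx\subseteq(x)$, i.e. $Mx=(x)$. I would first identify $Mx$ with the $\Sigma$-saturation of the ideal $(x)$: if $s=a/b\in M$ with $a\in S$, $b\in\Sigma$, then $c:=sx$ lies in $S$ and $bc=ax\in(x)$; conversely, if $c\in S$ and $bc=ax$ with $b\in\Sigma$ then $c=(a/b)x$ with $a/b\in M$. Hence $Mx=\{c\in S: bc\in(x)\text{ for some }b\in\Sigma\}=x\Frac(S)\cap S$. Now I would bring in the hypothesis supplied by Reduction~\ref{xass}: if $(x)=Q_1\cap\cdots\cap Q_\ell$ is the primary decomposition of $(x)$, then $Q_1\cup\cdots\cup Q_\ell$ contains no regular element of $S$. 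Since $S$ is Noetherian, its zero-divisors are exactly the union of its finitely many associated primes, so each $Q_i$ is contained in that union; by prime avoidance $Q_i$, and hence $\sqrt{Q_i}$, lies in a single associated prime of $S$, so $\sqrt{Q_i}\cap\Sigma=\emptyset$. Because $Q_i$ is $\sqrt{Q_i}$-primary, this yields $Q_i\Frac(S)\cap S=Q_i$ for every $i$; and since finite intersections commute with localization, $x\Frac(S)\cap S=\bigcap_i\bigl(Q_i\Frac(S)\cap S\bigr)=\bigcap_iQ_i=(x)$, as needed.

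The only step that calls for genuine attention is this last one --- the behaviour of the primary decomposition of $(x)$ under the (rather large) localization $S\rightsquigarrow\Frac(S)$. The key realisation is that the hypothesis ``$Q_1\cup\cdots\cup Q_\ell$ contains no regular element'' is exactly the assertion that every associated prime of $(x)$ meets $\Sigma$ trivially, so that passing to $\Frac(S)$ destroys no primary component of $(x)$; once this is seen, everything else is routine commutative algebra.
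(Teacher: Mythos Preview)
Your proof is correct and follows essentially the same route as the paper: both arguments reduce to showing $Mx\subseteq(x)$ via the primary decomposition $(x)=Q_1\cap\cdots\cap Q_\ell$ and the hypothesis that no $Q_i$ contains a regular element, then conclude by noting that $sx=tx$ with $t\in S$ forces $s-t\in Q\Frac(S)$. The paper does this element-wise (taking $c/d\in M$, writing $cx=yd$, and showing $y\in\bigcap Q_i$), whereas you phrase the same computation in the language of saturation and contraction of extensions; your detour through prime avoidance to get $\sqrt{Q_i}\cap\Sigma=\emptyset$ is unnecessary, since $d\in\Sigma$ regular implies every $d^n$ is regular and hence $d^n\notin Q_i$ directly.
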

\begin{proof}[Proof of Claim~\ref{fg}]
As $M$ contains $S$, it suffices to show that $\pi_Q(M)\subseteq S/Q$.
Suppose $\frac{c}{d}\in M$.
So $cx=yd$ for some $y\in S$.
Let $(x)=Q_1\cap\cdots\cap Q_\ell$ be the primary decomposition of $(x)$ in $S$.
It follows that for each $i=1,\dots,\ell$, $yd\in Q_i$.
Since $d$ is regular, Reduction~\ref{xass} implies no power of $d$ can be in $Q_i$.
Hence $y\in Q_i$ for all $i$.
So $y=y'x$ for some $y'\in S$.
Hence $cx=dy'x$, so that $(c-dy')\in\operatorname{ann}(x)=Q$.
It follows that $\pi_Q\left(\frac{c}{d}\right)=\pi_Q(y')\in S/Q$.
\end{proof}

Let $A$ be a finitely generated $k$-subalgebra of $\Frac(S/Q)$ that contains $S/Q$ and is preserved by the derivation $\delta$ obtained in Claim~\ref{mu} -- by Fact~\ref{deltafacts} this is possible.
We show that for all $j<\omega$, $\frac{\delta(b_j)}{b_j}\in A$.
Choose $s_j\in\Frac(S)$ such that
$$f_1(b_j)-f_2(b_j)=xs_j.$$
By~(\ref{ratiounit}) we also have $f_1(b_j)-f_2(b_j)=f_2(b_j)(u_j-1)$ where $u_j\in S^\times$.
So $\frac{s_j}{f_2(b_j)}x\in S$, and hence $\frac{s_j}{f_2(b_j)}\in M$.
Applying $\pi_Q$ we get by Claim~\ref{mu}(i) that $\frac{\delta(b_j)}{b_j}\in \pi_Q(M)$.
Now by Claim~\ref{fg}, $\frac{\delta(b_j)}{b_j}\in S/Q\subseteq A$.

To recap then, $(b_j:j<\omega)$ is a sequence in $\Frac(R)$ that is multiplicatively independent modulo $k^\times$, and hence modulo $(k^{\alg})^\times$ since $k$ is relatively algebraically closed in $R$.
By the discussion preceding Claim~\ref{mu} we view $(b_j:j<\omega)$ as a sequence in $\Frac(A)=\Frac(S/Q)$, and as such have just shown that it satisfies $\frac{\delta(b_j)}{b_j}\in A$ for all $j<\omega$.
We are thus in the context of Proposition~\ref{jou}, and we can conclude that $\langle b_j:j<\omega\rangle \cap \Frac(A)^\delta$ is nontrivial.
But~\ref{mu}(ii) tells us that $\Frac(A)^\delta$ is algebraic over $\Frac(T)^\delta$, and~\ref{mu}(iii) says the latter is in $F$.
It follows that $\langle b_j:j<\omega\rangle\cap F^{\alg}$ is nontrivial.

Suppose $g\in \langle b_j:j<\omega\rangle\cap F^{\alg}$ is nontrivial.
We claim, finally, that $g\in F$.
Indeed, suppose toward a contradiction that for some $m>1$,
$$P(X)=X^m+{c_{m-1}}X^{m-1}+\cdots+c_0$$
is the minimal polynomial of $g$ over $F$.
As $f_1$ and $f_2$ agree on $F$ we may as well identify $F$ with its image in $\Frac(S)$ so that $f_1,f_2$ become $F$-linear.
Applying $f_\nu$ to $P(g)=0$ for $\nu=1,2$ yields
\begin{equation}
\label{nupoly}
f_\nu(g)^m+{c_{m-1}}f_\nu(g)^{m-1}+\cdots+c_0=0
\end{equation}
in $\Frac(S)$.
Since $g\in F_{(x)}$ we have that $f_1(g)=f_2(g)+sx$ for some $s\in\Frac(S)$.
Substituting this into~(\ref{nupoly}) for $\nu=1$ we get
\begin{eqnarray*}
0
&=&
(f_2(g)+sx)^m+{c_{m-1}}(f_2(g)+sx)^{m-1}+\cdots+c_0\\
&=&
\big(f_2(g)^m+{c_{m-1}}f_2(g)^{m-1}+\cdots+c_0\big) +\\
& &
sxmf_2(g)^{m-1}+sx(m-1)c_{m-1}f_2(g)^{m-2}+\cdots+sxc_1\\
&=&
sxmf_2(g)^{m-1}+sx(m-1)c_{m-1}f_2(g)^{m-2}+\cdots+sxc_1\\
&=&
sx \cdot f_2\big(mg^{m-1}+(m-1)c_{m-1}g^{m-2}+\cdots+c_1\big)
\end{eqnarray*}
where the second equality uses~(\ref{nupoly}) for $\nu=2$, and the fact that $x^2=0$.
So, if we let $g':=mg^{m-1}+(m-1)c_{m-1}g^{m-2}+\cdots+c_1$, then $sxf_2(g')=0$.
Note that $g'\neq 0$ by the minimality of the degree $m$, and hence $f_2(g')$ is regular in $S$.
It follows that $sx=0$.
But this means that $f_1(g)=f_2(g)$, so that $g\in F$, contradicting $m>1$.

We have proved that $\langle b_j:j<\omega\rangle\cap F$, and hence $G\cap F$, is nontrivial.
\end{proof}

\bigskip
\section{Proof of the main theorem}

\noindent
We now deduce the main part of Theorem~\ref{main} as stated in the introduction from the algebraic statement given in Theorem~\ref{main-algebra}.
We will deal with the rest of the statement, namely the improvement in the reduced case, in $\S$\ref{section-red} below.

\begin{theorem}
\label{main-nr}
Suppose $X$ is an algebraic variety, $Z$ is an irreducible algebraic scheme, and $\phi_1,\phi_2:Z\to X$ are rational maps whose restrictions to $Z_{\red}$ are dominant, all over an algebraically closed field $K$.
Then the following are equivalent:
\begin{itemize}
\item[(1)]
There exist nonempty Zariski open subsets $V\subseteq Z$ and $U\subseteq X$ such that the restrictions $\phi_1^V,\phi^V_2:V\to U$ are dominant regular morphisms, and there exist infinitely many hypersurfaces $H$ on $U$ satisfying
$$(\phi_1^V)^{-1}(H)=(\phi^V_2)^{-1}(H).$$
\item[(2)]
There exists $g\in K(X)\setminus K$ such that $g\phi_1=g\phi_2$.
\end{itemize}
\end{theorem}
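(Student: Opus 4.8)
The plan is to prove $(2)\Rightarrow(1)$ by an explicit construction, and to deduce $(1)\Rightarrow(2)$ from Theorem~\ref{main-algebra} -- which carries all the real weight -- after reducing to an affine situation with finitely generated data and principal defining ideals. For $(2)\Rightarrow(1)$: given $g\in K(X)\setminus K$ with $g\phi_1=g\phi_2$, pick a nonempty affine open $U\subseteq X$ on which $g$ is regular, and then a nonempty open $V\subseteq Z$ on which both $\phi_i$ restrict to dominant regular morphisms $\phi_i^V:V\to U$; this is possible because the restrictions of $\phi_1,\phi_2$ to $Z_{\red}$ are dominant. Since $g$ is nonconstant, for all but finitely many scalars $c$ the Zariski closed set $H_c:=\{g=c\}\subseteq U$ is a hypersurface, and $g\phi_1^V=g\phi_2^V$ gives $(\phi_1^V)^{-1}(H_c)=(\phi_2^V)^{-1}(H_c)$, both being cut out by $g\phi_1^V-c=g\phi_2^V-c$ in $V$; as $K$ is infinite, infinitely many $H_c$ witness~(1).

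For $(1)\Rightarrow(2)$, begin with $V\subseteq Z$, $U\subseteq X$ and the infinitely many hypersurfaces $H$ on $U$ with $(\phi_1^V)^{-1}(H)=(\phi_2^V)^{-1}(H)$. Shrink $U$ so that it is affine and $R:=\mathcal O(U)$ is a unique factorisation domain; inverting the finitely many regular functions needed for this kills only finitely many of the given hypersurfaces, and makes at most boundedly many of the rest coincide upon restriction, so infinitely many good hypersurfaces survive. Shrink $V$ to keep the $\phi_i^V$ dominant regular morphisms and so that, writing $S:=\mathcal O(V)$ and $f_i:=(\phi_i^V)^*:R\to S$, the maps $f_1,f_2$ send nonzero elements of $R$ to regular elements of $S$ -- possible since the $\phi_i^V$ are dominant and $Z$ irreducible, and requiring nothing beyond deleting the finitely many embedded primes in the main case where the nilradical of $S$ is prime. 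Because $R$ is a UFD, each surviving hypersurface $H$ has a squarefree defining element $a_H\in R$, unique up to a unit, and $(\phi_1^V)^{-1}(H)=(\phi_2^V)^{-1}(H)$ becomes exactly $f_1(a_H)S=f_2(a_H)S$. Distinct surviving hypersurfaces have distinct, finitely supported, nonzero multiplicity vectors in $\bigoplus_D\mathbb Z$, the sum running over the prime divisors $D$ of $U$; choosing them greedily -- at each stage taking one that involves a prime divisor not previously used -- extracts a sequence $(a_j:j<\omega)$ in $R$ with $\mathbb Z$-linearly independent multiplicity vectors, hence multiplicatively independent modulo $K^\times$, and with $f_1(a_j)S=f_2(a_j)S$ for all $j$.

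It remains to get literally inside Theorem~\ref{main-algebra}, which requires a finitely generated field of definition. A spreading-out argument provides a finitely generated subfield $k\subseteq K$ over which $X$, $Z$, $U$, $V$, $\phi_1$, $\phi_2$ and the localisations above are defined, with $R$ and $S$ descending to finitely generated $k$-algebras (still denoted $R$, $S$), with $k$ relatively algebraically closed in $\Frac(R)$ and in $\Frac(S/P)$ for every minimal prime $P$ of $S$, with the descended $f_1,f_2$ still sending nonzero elements to regular elements, and -- crucially -- with $R$ still a UFD, which is legitimate over a finitely generated base since there the divisor class group of a normal variety is finitely generated and so can be annihilated by a localisation. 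The one delicate point is that the sequence $(a_j)$ a priori involves infinitely many elements of $K$; but the proof of Theorem~\ref{main-algebra} consumes only a bounded initial segment $a_1,\dots,a_M$ of its input, with $M$ controlled by $\operatorname{trdeg}\big(\Frac(R)/k\big)$ and the ranks of the finitely generated groups $(S/P)^\times/k^\times$ -- invariants unchanged by enlarging $k$ within $K$ -- so a final finite enlargement of $k$ puts $a_1,\dots,a_M$ over $k$. Theorem~\ref{main-algebra} now yields $g\in\Frac(R)\setminus k$ with $f_1(g)=f_2(g)$, lying in the group generated by the $a_j$. By part~(b) of the remark following that theorem $g$ is transcendental over $k$, and since $\Frac(R)$ is a regular extension of $k$ we have $\Frac(R)\cap K=k$ inside $K(X)$, so $g\in K(X)\setminus K$. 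Finally $f_1(g)=f_2(g)$ unwinds to $g\phi_1=g\phi_2$, giving~(2). I expect this last paragraph -- reconciling a finitely generated field of definition with principal defining ideals and a surviving supply of invariant hypersurfaces -- to be the main obstacle; everything hard happens inside Theorem~\ref{main-algebra}.
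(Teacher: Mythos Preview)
Your overall architecture matches the paper's: reduce to an affine setup over a finitely generated field with $R$ a UFD, extract a multiplicatively independent sequence of principal generators, and feed it into Theorem~\ref{main-algebra}. The $(2)\Rightarrow(1)$ direction via level sets is exactly what the paper does.

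There is, however, a genuine order-of-operations error in your $(1)\Rightarrow(2)$ argument. You first shrink $U$ over the algebraically closed field $K$ so that $R=\mathcal O(U)$ is a UFD, and only afterwards descend to a finitely generated $k$. But over $K$ this shrinking is not always possible: for instance if $X$ is an abelian variety over $\mathbb C$ then every nonempty affine open has uncountable divisor class group, so no finite localisation yields a UFD. You yourself note, in the next paragraph, that the UFD step ``is legitimate over a finitely generated base'' because there the class group is finitely generated --- which is exactly why the descent must come \emph{first}. The paper does it in that order: fix $k$, then invoke the finitely-generated-field UFD lemma to shrink.

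Once the order is fixed, you face the problem you flagged as the main obstacle: getting the invariant hypersurfaces (hence the $a_j$) to live over $k$. Your proposed fix --- bound how many $a_j$ the proof of Theorem~\ref{main-algebra} actually consumes, then enlarge $k$ to contain those --- is different from the paper's solution and is shakier than you suggest. In the reduced case the bound really is $mN$ with $m,N$ as you describe, and those quantities are stable under enlarging $k$; but in the non-reduced case the proof of Theorem~\ref{main-algebra} runs a noetherian induction over ideals in the nilradical (Reduction~\ref{maxass}) and ultimately appeals to Proposition~\ref{jou}, so the effective bound depends on more than just $\operatorname{trdeg}(\Frac(R)/k)$ and the unit-group ranks, and its stability under enlarging $k$ would need separate justification. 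The paper sidesteps all of this with a short Galois-conjugate trick: either infinitely many of the given hypersurfaces are already over $k^{\alg}$, or one of them is not --- in which case its $\operatorname{Aut}(L^{\alg}/k)$-conjugates (for $L$ a field of definition) give infinitely many over $L^{\alg}$, and one replaces $k$ by $L$. Taking unions of $\gal(k)$-conjugates then brings everything down to $k$. This is both cleaner and avoids any dependence on the internal structure of the proof of Theorem~\ref{main-algebra}.
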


\begin{proof}
That~(2) implies~(1) is more or less clear: we can choose nonempty Zariski open sets $V\subseteq Z$ and $U\subseteq X$ such that the restrictions $\phi^V_1,\phi^V_2:V\to U$ are dominant regular morphisms and such that $g:U\to \mathbb A^1$ is a nonconstant morphism to the affine line.
We have the commuting diagram
$$\xymatrix{
V\ar[r]^{\phi_1^V}\ar[d]_{\phi_2^V} &U\ar[d]^{g}\\
U\ar[r]_{g}&\mathbb A^1
}$$
so that level sets of $g$ over the $K$-points of $\mathbb A^1$ yield infinitely many hypersurfaces $H$ on $U$ satisfying $(\phi_1^V)^{-1}(H)=(\phi^V_2)^{-1}(H)$.

Assume that~(1) holds.

Let $k\subseteq K$ be a finitely generated subfield over which $Z, X, V, U, \phi_1, \phi_2$ are defined.
That is, $X=X_k\times_kK$ for some geometrically irreducible algebraic $k$-variety $X_k$, and $Z=Z_k\times_kK$ where $(Z_k)_{\red}$ is a geometrically irreducible algebraic $k$-variety.
We have similar descent statements to $k$ for $V, U, \phi_1,\phi_2$ as well.

We first claim that $k$ can be chosen so that there are infinitely many hypersurfaces $H$ on $U$ defined over $k$ satisfying $(\phi_1^V)^{-1}(H)=(\phi^V_2)^{-1}(H)$.
Indeed, fix $k$ and suppose there exists such a hypersurface $H$ that is not defined over $k^{\alg}$.
Then $H$ is defined over a finitely generated nonalgebraic extension $L$ of~$k$.
Now $\operatorname{Aut}(L^{\alg}/k)$ acts naturally on the whole situation, and there are infinitely many $\operatorname{Aut}(L^{\alg}/k)$-conjugates of $H$ in $U$.
All these conjugates are defined over $L^{\alg}$ and satisfy the property that their inverse images under $\phi^V_1$ and $\phi^V_2$ agree.
So choosing $L$ instead of~$k$, we may as well assume that we have infinitely many such hypersurfaces over~$k^{\alg}$ to start with.
Replacing each of these with the union of their conjugates under the action of $\gal(k)$, we may in fact assume they are over $k$ itself.

Suppose therefore that $(H_j:j<\omega)$ is an infinite sequence of hypersurfaces over $k$ on $U$ with $(\phi^V_1)^{-1}(H_j)=(\phi^V_2)^{-1}(H_j)$ for all $j<\omega$, and such that $H_j\not\subseteq\bigcup_{i<j}H_i$.

Replacing $V$ and $U$ by smaller nonempty Zariski open subsets, we may assume $U=\spec(R_K)$ and $V=\spec(S_K)$ where $R$ is a finitely generated $k$-algebra that is an integral domain, $S$ is a finitely generated $k$-algebra whose nilradical $N$ is prime, $R_K:=R\otimes_kK$ and $S_K:=S\otimes_kK$, and $\phi^V_1,\phi^V_2$ are induced by $k$-algebra homomorphisms $f_1,f_2:R\to S$.
The assumption that the $\phi_i$ restrict to dominant rational maps on $Z_{\red}$ tells us that the $f_i$ composed with the  quotient by $N$ are injective.
(Hence, $f_1,f_2$ themselves are embeddings.)
Note that as $X_k$ is geometrically irreducible, $k$ is relatively algebraically closed in $\Frac(R)$.
Similarly, as $(Z_k)_{\red}$ is geometrically irreducible, $k$ is relatively algebraically closed in $\Frac(S/N)$.

Now, as $k$ is a finitely generated field, the localisation of $R$ at some nonzero element is a unique factorisation domain -- this is by~\cite[Lemma~6.11]{bllsm}.
So we may assume that $R$ is already a UFD.
The vanishing ideals $I(H_j)$ are of the form $I_jR_K$ where $I_j$ is a (radical) height one ideal in $R$, and hence of the form $I_j=a_jR$ for some sequence $(a_j:j<\omega)$ in $R$.
The scheme-theoretic inverse images $(\phi_\nu^V)^{-1}(H_j)$ are by definition given by the ideals $f_\nu(I_j)S_K$, for $\nu=1,2$.
That $(\phi^V_1)^{-1}(H_j)=(\phi^V_2)^{-1}(H_j)$ therefore implies that $f_1(a_j)S=f_2(a_j)S$ for all $j<\omega$.
Moreover, since $ H_j\not\subseteq\bigcup_{i<j}H_i$, each $a_j$ has an irreducible factor that does not appear in $a_i$ for $i<j$, and so no nontrivial product of integer powers of the $a_j$ can be a constant in $\Frac(R)$.
That is, the hypotheses of Theorem~\ref{main-algebra} are satisfied, and there must exist $g\in\Frac(R)\setminus k$ such that $f_1(g)=f_2(g)$.
Note that $g\in\Frac(R_K)=K(X)$ and $g\phi_1=g\phi_2$.
In $K(X)=\Frac(R)\otimes_kK$ the intersection of $\Frac(R)$ and $K$ is $k$, so we have that $g\notin K$.
This proves~(2).
\end{proof}

\bigskip
\section{An application to algebraic $\mathcal D$-varieties}
\label{section-diff}

\noindent
In this section we specialise Theorem~\ref{main-nr} to the differential context to see how we recover the finite-dimensional Jouanalou-Hrushovski theorem.
In fact we work rather more generally in a setting that appears in the work of the second author and Thomas Scanlon~\cite{paperC} toward the model theory of fields equipped with a general class of operators.
We will thus obtain a Jouanalou-Hrushovski type theorem for these generalised operators.

The setting is as follows.
Fix an algebraically closed field $K$ of characteristic zero.
Let $\mathcal D$ denote the following fixed data:
\begin{itemize}
\item
a finite dimensional $K$-algebra $B$,
\item
a maximal ideal $\mathfrak m$ of $B$ with $\pi:B\to K$ the quotient map,
\item
a $K$-basis $(\epsilon_0,\dots,\epsilon_\ell)$ for $B$ such that $\pi(\epsilon_0)=1$ and $\epsilon_1,\dots,\epsilon_\ell\in\mathfrak m$.
\end{itemize}

The following notion first appears, with somewhat different notation, in~\cite{paperA}.
It was inspired by Alexandru Buium's approach to differential algebra.

\begin{definition}
By a {\em $\mathcal D$-ring} we will mean a pair $(R,e)$ where $R$ is a $K$-algebra and $e:R\to R\otimes_KB$ is a $K$-algebra homomorphism satisfying $\pi^R\circ e=\id_R$.
Here $\pi^R=(\id_R\otimes_K\pi):R\otimes_KB\to R$ is the $R$-algebra homomorphism induced by~$\pi$.
We denote by $R^{\mathcal D}:=\{r\in R:e(r)=r\otimes 1\}$ the subring of {\em $\mathcal D$-constants}. 
\end{definition}

We will be applying Theorem~\ref{main-nr} to $X=\spec(R)$ when $(R,e)$ is a $\mathcal D$-ring and $R$ is a finitely generated $K$-algebra that is an integral domain.
We will set $Z=\spec(R\otimes_KB)$, $\phi_1:Z\to X$ the morphism induced by $e$, and $\phi_2:Z\to X$ the morphism induced by $r\mapsto r\otimes 1$.
Note that the nonreduced nature of $Z$ here is essential; $Z_{\red}=X$ and $\phi_1,\phi_2$ restricted to $Z_{\red}$ are both the identity.

But in order to see what the theorem will say in this context, we need to explore $\mathcal D$-rings a bit further.
First, two motivating examples.

\begin{example}[Differential rings]
\label{d-der}
Let $\mathcal D$ be given by the local $K$-algebra $K[\epsilon]/(\epsilon^2)$ with maximal ideal $\mathfrak m=(\epsilon)$ and $K$-basis $(1,\epsilon)$.
Suppose $R$ is a $K$-algebra equipped with a $K$-linear derivation $\delta:R\to R$.
Then we can make $R$ into a $\mathcal D$-ring by letting $e:R\to R[\epsilon]/\epsilon^2$ be $r\mapsto r+\delta(r)\epsilon$.
In fact, every $\mathcal D$-ring is of this form.
\end{example}

\begin{example}[Difference rings]
\label{d-diff}
Let $\mathcal D$ be given by the $K$-algebra $K\times K$ with maximal ideal $\mathfrak m$ generated by $(0,1)$ and $K$-basis $\big((0,1),(1,0)\big)$.
Then the $\mathcal D$-rings are precisely the $K$-algebras $R$ equipped with an endomorphism $\sigma:R\to R$, where $e:R\to R\times R$ is given by $r\mapsto (r,\sigma(r))$.
\end{example}

In fact, as suggested by the examples,  the $\mathcal D$-ring formalism is really a way to study rings equipped with certain operators.
Note that $(1\otimes\epsilon_0,\dots,1\otimes\epsilon_\ell)$ is an $R$-basis for $R\otimes_KB$, and $e:R\to R\otimes_KB$ can be written with respect to this basis so that for all $r\in R$,
$$e(r)=r\otimes\epsilon_0+\partial_1(r)\otimes\epsilon_1+\cdots+\partial_\ell(r)\otimes\epsilon_\ell$$
where $\partial_i:R\to R$ are $K$-linear operators on $R$.
(That the $\epsilon_0$-coefficient of $e(r)$ is~$r$ comes from the fact that $\pi^R\circ e=\id_R$ and $\pi(\epsilon_0)=1$.)
Writing $\partial:=(\partial_1,\dots,\partial_\ell)$, we can recover $e$ from $\partial$ and vice versa.
We will refer interchangeably to $(R,e)$ and $(R,\partial)$ as the $\mathcal D$-ring.

The class of operators $\partial$ that can be fit into this context is rather broad and robust, including various combinations and twists of differential and difference operators, and closed under various operations.
See paragraphs~3.3 through~3.7 of~\cite{paperC} for a discussion of examples.

Naturally associated to the operators $\partial$ on $R$ are certain $K$-algebra endomorphisms of $R$.
Let $\mathfrak m=\mathfrak m_0,\dots,\mathfrak m_t$ be the distinct maximal ideals of $B$, and $\pi=\pi_0,\pi_1,\dots,\pi_t$ the corresponding quotient maps $B\to K$.
Let $\sigma_i:=\pi_i^R\circ e:R\to R$ for $i=0,1,\dots, t$.
Note that $\sigma_0=\id_R$, and that $\sigma_1,\dots,\sigma_t$ are $K$-algebra endomorphisms of $R$ that are in fact $K$-linear combinations of the $\partial_1,\dots,\partial_\ell$.
We write $\sigma:=(\sigma_1,\dots,\sigma_t)$ and call $(R,\sigma)$ the {\em difference ring associated to $(R,e)$}.
Note that the assoicated endomorphism in the differential case of Example~\ref{d-der} is just the identity, and in the difference case of Example~\ref{d-diff} is $\sigma$ itself.

\begin{definition}[Totally invariant $\mathcal D$-ideals]
Suppose $(R,e)$ is a $\mathcal D$-ring.
An ideal $I\subseteq R$ is said to be a {\em $\mathcal D$-ideal} if $\partial_i(I)\subseteq I$ for all $i=1,\dots,\ell$, and {\em totally invariant} if $\sigma_j(I)=I$ for all $j=1,\dots,t$.
\end{definition}

In what follows, we view $R\otimes_KB$ as an $R$-algebra under the homomorphism $r\mapsto r\otimes 1$.
Hence, any ideal $I$ of $R$ generates an extension ideal of $R\otimes_KB$ which we denote by $I(R\otimes_KB)$.

\begin{proposition}
\label{d-ideal}
Suppose $(R,e)$ is a $\mathcal D$-ring with $R$ noetherian.
Let $I$ be an ideal of $R$.
Then $I$ is a totally invariant $\mathcal D$-ideal if and only if $e(I)(R\otimes_KB)=I(R\otimes_KB)$.
\end{proposition}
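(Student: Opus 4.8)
The plan is to first strip the statement down to a purely local computation. Since $B$ is free over $K$, the extension ideal $I(R\otimes_K B)$ is just $I\otimes_K B$ as a submodule of $R\otimes_K B$. Expanding $e(a)=\sum_{j=0}^{\ell}\partial_j(a)\otimes\epsilon_j$ (with $\partial_0=\id_R$) against the $K$-basis $(\epsilon_0,\dots,\epsilon_\ell)$ shows that $e(I)\subseteq I\otimes_K B$ is equivalent to $\partial_j(I)\subseteq I$ for $j=1,\dots,\ell$, i.e. to $I$ being a $\mathcal D$-ideal; and this inclusion is in turn equivalent to $e(I)(R\otimes_K B)\subseteq I(R\otimes_K B)$. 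Hence each side of the claimed equivalence already forces $I$ to be a $\mathcal D$-ideal, and it remains to prove that, assuming $I$ is a $\mathcal D$-ideal, the reverse inclusion $I(R\otimes_K B)\subseteq e(I)(R\otimes_K B)$ holds if and only if $I$ is totally invariant.

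Next I would decompose along the idempotents of $B$. As $B$ is a finite-dimensional algebra over the algebraically closed field $K$, it is Artinian and splits as $B=\prod_{i=0}^{t}B_i$ with each $B_i$ local Artinian, residue field $K$, and nilpotent maximal ideal $\mathfrak n_i$; here $\pi_i\colon B\to B_i\to K$ is the residue map. Correspondingly $R\otimes_K B=\prod_i(R\otimes_K B_i)$; write $e=(e_i)_i$ with $e_i\colon R\to R\otimes_K B_i$, and let $\rho_i=\id_R\otimes(B_i\to K)\colon R\otimes_K B_i\to R$, so that $\rho_i\circ e_i=\sigma_i$ and $\sigma_0=\id_R$. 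Using the elementary fact that in a finite product of rings the ideal generated by a subset is the product of the ideals generated by its coordinate projections, the equation $e(I)(R\otimes_K B)=I(R\otimes_K B)$ is equivalent to $e_i(I)(R\otimes_K B_i)=I(R\otimes_K B_i)$ for every $i=0,\dots,t$.

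It then remains to analyse a single local factor; put $J_i:=e_i(I)(R\otimes_K B_i)$. Since $I$ is a $\mathcal D$-ideal we have $e_i(I)\subseteq I\otimes_K B_i$, so $J_i\subseteq I(R\otimes_K B_i)$ automatically and only the reverse inclusion is in question. Applying the surjective $R$-algebra map $\rho_i$ to $J_i$ gives the ideal $\sigma_i(I)R$ of $R$, while $\rho_i(I(R\otimes_K B_i))=I$; hence $J_i=I(R\otimes_K B_i)$ forces $\sigma_i(I)R=I$, which for $i=0$ is automatic ($\sigma_0=\id_R$) and for $i=1,\dots,t$ says precisely that $I$ is totally invariant. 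Conversely, assuming $\sigma_i(I)R=I$, the key point is that for $a\in I$ the element $e_i(a)-\sigma_i(a)\otimes 1$ lies in $(I\otimes_K B_i)\cap\ker\rho_i=I\otimes_K\mathfrak n_i$, so $\sigma_i(a)\otimes 1\in J_i+I\otimes_K\mathfrak n_i$; since such elements generate $I(R\otimes_K B_i)$, we get $I\otimes_K B_i\subseteq J_i+I\otimes_K\mathfrak n_i$, and multiplying repeatedly by $R\otimes_K\mathfrak n_i$ and invoking the nilpotence of $\mathfrak n_i$ upgrades this to $I\otimes_K B_i\subseteq J_i$. Reassembling the factors yields the equivalence. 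I expect this last lifting step — promoting the modulo-nilpotents identity $\rho_i(J_i)=I$ to the honest identity $J_i=I(R\otimes_K B_i)$ — to be the main obstacle, since it is where one must genuinely use that $B_i$ is a nilpotent thickening of $K$ and not merely handle the first-order case $K[\epsilon]/(\epsilon^2)$; the separate bookkeeping for the reduced direction $i=0$ versus $i\geq 1$ is the other point requiring care, and noetherianity of $R$ can be used, if one prefers, to run the lifting step on a finite generating set of $I$.
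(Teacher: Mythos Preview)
Your argument is correct and takes a genuinely different route from the paper's. The paper does not decompose $B$ into its local Artinian factors; instead it works globally, first passing to a $K$-basis $(\epsilon_0,\dots,\epsilon_\ell)$ adapted so that $\pi_i(\epsilon_j)=\delta_{ij}$ for $i,j\le t$ and $(\epsilon_{t+1},\dots,\epsilon_\ell)$ spans the Jacobson radical $J$ of $B$. It then invokes noetherianity to fix generators $a_1,\dots,a_m$ of $I$, uses $\sigma_j(I)=I$ to choose preimages $a_{j,k}\in I$ with $\sigma_j(a_{j,k})=a_k$, builds explicit elements $y_k=\sum_{j=0}^t e(a_{j,k})(1\otimes\epsilon_j)$ in $e(I)(R\otimes_K B)$, and verifies the matrix identity $\mathbf a(\mathbf 1+\mathbf S)=\mathbf y$ with $\mathbf S\in\mat_m(R\otimes_K J)$; nilpotence of $\mathbf S$ makes $\mathbf 1+\mathbf S$ invertible and yields $a_k\otimes 1\in e(I)(R\otimes_K B)$. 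Your Nakayama-style iteration $M\subseteq J_i+MN\Rightarrow M\subseteq J_i+MN^k\Rightarrow M\subseteq J_i$ (with $N=R\otimes_K\mathfrak n_i$ nilpotent) is the module-theoretic counterpart of this matrix inversion, carried out factor by factor and without choosing generators; one dividend is that it makes visible that the noetherian hypothesis on $R$ is not actually needed, since nilpotence of $\mathfrak n_i$ alone terminates the iteration. A small point of alignment: what applying $\rho_i$ (or the paper's $\pi_j^R$) extracts from the ideal equality is $\sigma_i(I)R=I$ rather than the literal set equality $\sigma_i(I)=I$ of the definition; your converse only uses the former, whereas the paper's construction of the $a_{j,k}$ uses the latter, so the two proofs are working with slightly different readings of ``totally invariant''.
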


\begin{proof}
Note that $(1\otimes\epsilon_0,\dots,1\otimes\epsilon_\ell)$ is an $R$-basis for $R\otimes_KB$ and that $$I(R\otimes_KB)=\{a_0\otimes\epsilon_0+\cdots+a_\ell\otimes\epsilon_\ell:a_0,\dots,a_\ell\in I\}.$$
Suppose $e(I)(R\otimes_KB)=I(R\otimes_KB)$.
If $a\in I$ then
$$e(a)=a\otimes\epsilon_0+\partial_1(a)\otimes\epsilon_1+\cdots+\partial_\ell(a)\otimes\epsilon_\ell\in I(R\otimes_KB),$$
and hence $\partial_1(a),\cdots,\partial_\ell(a)\in I$.
So $I$ is a $\mathcal D$-ideal.
For total invariance, fixing $j=1,\dots,t$ and applying $\pi_j^R$ to $e(I)(R\otimes_KB)=I(R\otimes_KB)$ we get immediately that $\sigma_j(I)=I$.

Conversely, suppose $I$ is a totally invariant $\mathcal D$-ideal.
Then $e(a)\in I(R\otimes_KB)$ for all $a\in I$, since $\partial_1(a),\dots,\partial_\ell(a)\in I$.
That is, $e(I)(R\otimes_KB)\subseteq I(R\otimes_KB)$.

So it remains to show that $I(R\otimes_KB)\subseteq e(I)(R\otimes_KB)$ whenever $I$ is a totally invariant $\mathcal D$-ideal.

We first improve the choice of $K$-basis $(\epsilon_0,\dots,\epsilon_\ell)$.
Note that changing the basis, as long as $\pi(\epsilon_0)=1$ and $\epsilon_2,\dots,\epsilon_\ell\in\mathfrak m$ remain true, does not affect $e$ or $\sigma$ as these are intrinsically defined.
While it does change $\partial$ it does so only by replacing these operators with certain $K$-linear combinations of them.
In particular, the property of being a totally invariant $\mathcal D$-ideal is not affected.
We may therefore adjust the basis so that $\pi_i(\epsilon_j)=\delta_{i,j}$ for  $i,j=0,\dots,t$, and  $(\epsilon_{t+1},\dots,\epsilon_\ell)$ forms a $K$-basis for the Jacobson radical $J:=\bigcap_{j=0}^t\mathfrak m_j$.
Note that one of the consequences of this choice of basis is that  $\sigma_j=\partial_j$ for $j=0,\dots,t$.
(Recall that $\sigma_0=\partial_0=\id_R$.)

Suppose now that $I=(a_1,\dots,a_m)$.
For each $j=0,\dots,t$, since $\sigma_j(I)=I$, there is $a_{j,k}\in I$ such that $\sigma_j(a_{j,k})=a_k$ for all $k=1,\dots,m$.
Letting
$$y_k:=\sum_{j=0}^te(a_{j,k})(1\otimes\epsilon_j)$$
we have that for each $i=0,\dots,t$,
\begin{eqnarray*}
\pi^R_i(y_k)
&=&
\sum_{j=0}^t\pi_i^R\left(\sum_{n=0}^\ell\partial_n(a_{j,k})\otimes\epsilon_n\right)\pi_i^R(1\otimes\epsilon_j)\\
&=&
\sum_{j=0}^t\pi_i^R(\partial_i(a_{j,k})\otimes\epsilon_i)\delta_{i,j}\\
&=&
\sigma_i(a_{i,k})\\
&=&
a_k\\
&=&
\pi_i^R(a_k\otimes 1).
\end{eqnarray*}
The last equality uses the fact that $\pi_i^R$ is $R$-linear.
Hence $(a_k\otimes1)-y_k$ is of the form $b_{t+1}\otimes\epsilon_{t+1}+\cdots+b_\ell\otimes\epsilon_\ell$ for some $b_{t+1},\dots,b_\ell\in R$.
(Despite the notation, the $b_i$'s depend also on $k$.)
In fact, since $y_k\in e(I)(R\otimes_KB)\subseteq I(R\otimes_KB)$, we get that $b_{t+1},\dots,b_\ell\in I$.
Writing $b_\mu=\sum_{\nu=1}^mr_{\mu,\nu,k}a_\nu$, and setting $s_{\nu,k}:=-\sum_{\mu=t+1}^\ell(r_{\mu,\nu,k}\otimes\epsilon_\mu)$, we have that
$$y_k=(a_1\otimes 1)s_{1,k}+(a_2\otimes 1)s_{2,k}+\cdots+(a_k\otimes 1)(1+s_{k,k})+\cdots+(a_m\otimes 1)s_{m,k}$$
for all $k=1,\dots,m$.
This can be written in matrix notation as
$${\bf a}({\bf 1}+{\bf S})={\bf y}$$
where ${\bf a}=(a_1\otimes 1,\dots,a_m\otimes 1)$, ${\bf S}=(s_{\nu,k})\in \mat_m(R\otimes_KB)$, ${\bf 1}=\id_{\mat_m(R\otimes_KB)}$, and ${\bf y}=(y_1,\dots,y_m)$.
But since each $s_{\nu,k}\in R\otimes_KJ$, and $J$ is a nilpotent ideal of $B$, we get that ${\bf S}$ is nilpotent, and so ${\bf 1}+{\bf S}$ is invertible.
Hence,
$${\bf a}={\bf y}({\bf 1}+{\bf S})^{-1}\ \in\  \big(e(I)(R\otimes_KB)\big)^m.$$
That is, for each generator $a_k$ of $I$ in $R$ we have $a_k\otimes 1\in e(I)(R\otimes_KB)$.
Therefore $I(R\otimes_KB)\subseteq e(I)(R\otimes_KB)$, as desired.
\end{proof}

We are ready now to specialise Theorem~\ref{main-nr}.

\begin{definition}
By an {\em algebraic $\mathcal D$-variety} we mean an affine algebraic variety $X$ over $K$ whose co-ordinate ring $K[X]$ comes equipped with a $\mathcal D$-ring structure $e:K[X]\to B[X]$ whose associated endomorphisms of $K[X]$ are injective.
A Zariski closed subset of $X$ is said to be {\em totally $\mathcal D$-invariant} if its corresponding ideal is a totally invariant $\mathcal D$-ideal.
\end{definition}

\begin{remark}
The assumption that the associated endomorphisms are injective is to ensure that the $\mathcal D$-ring structure extends (uniquely) to the fraction field $K(X)$. See Lemma~4.9 of~\cite{paperC} for a proof of this.
\end{remark}

\begin{theorem}
\label{main-d}
Suppose $(X,e)$ is an algebraic $\mathcal D$-variety over $K$.
If $(X,e)$ has infinitely many totally $\mathcal D$-invariant hypersurfaces then there exists a $\mathcal D$-constant rational function $g\in K(X)\setminus K$.
\end{theorem}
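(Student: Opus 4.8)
The plan is to apply Theorem~\ref{main-nr} with $X=\spec(R)$, $R:=K[X]$, to the diagram $Z=\spec(R\otimes_KB)\to X$ in which $\phi_1$ is induced by $e\colon R\to R\otimes_KB$ and $\phi_2$ by $r\mapsto r\otimes1$. The dictionary is supplied by Proposition~\ref{d-ideal}: a Zariski closed $H\subseteq X$ with vanishing ideal $I$ is totally $\mathcal D$-invariant exactly when $e(I)(R\otimes_KB)=I(R\otimes_KB)$, which is precisely the statement $\phi_1^{-1}(H)=\phi_2^{-1}(H)$ for scheme-theoretic inverse images. So the hypothesis produces infinitely many hypersurfaces on $X$ with equal inverse images, and a conclusion $g\phi_1=g\phi_2$ with $g\in K(X)\setminus K$ reads as $e(g)=g\otimes1$, i.e.\ $g$ is a nonconstant $\mathcal D$-constant. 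The only real subtlety is that $Z$ need not be irreducible, so Theorem~\ref{main-nr} does not apply to it verbatim.

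To handle this, decompose $B=\prod_{i=0}^tB_i$ into local Artinian factors, with $B_0$ the factor at $\mathfrak m$, so that $Z=\bigsqcup_iZ_i$ with $Z_i=\spec(R\otimes_KB_i)$. Since $K$ is algebraically closed and $R$ a domain, $R\otimes_KB_i$ has unique minimal prime $R\otimes_K\mathfrak n_i$, where $\mathfrak n_i$ is the nilradical of $B_i$ (which, $B_i$ being local Artinian, is also its maximal ideal), and the quotient is $R$; thus each $Z_i$ is irreducible with $(Z_i)_{\red}=X$, and on it $\phi_1,\phi_2$ restrict to the dominant self-maps induced by the associated endomorphism $\sigma_i$ (with $\sigma_0=\id_R$) and by $\id_R$ respectively — dominance being exactly the injectivity hypothesis built into the notion of an algebraic $\mathcal D$-variety. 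Restricting $\phi_1^{-1}(H)=\phi_2^{-1}(H)$ to $Z_i$, each $(X,Z_i,\phi_1|_{Z_i},\phi_2|_{Z_i})$ satisfies the hypotheses of Theorem~\ref{main-nr}; but applying that theorem to one $Z_i$ only yields a $\mathcal D_i$-constant, while a $\mathcal D$-constant must be constant for all the factors simultaneously. So I would instead descend to the level of Theorem~\ref{main-algebra} and then intersect over $i$.

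Descend every piece of data to a finitely generated $k\subseteq K$: the variety $X$ (geometrically irreducible over $k$), the algebra $B$ with its idempotents and hence its local factors $\hat B_i$ over $k$, the maps $e$ and $\sigma_i$, and — after the Galois-orbit argument used in the proof of Theorem~\ref{main-nr} — infinitely many invariant hypersurfaces $H_j$ with $H_j\not\subseteq\bigcup_{l<j}H_l$; arrange further that $R_k$ is a UFD with $k$ relatively algebraically closed in $\Frac(R_k)$, and write $I(H_j)=a_jR_k$. The sequence $(a_j:j<\omega)$ is multiplicatively independent modulo $k^\times$ and generates a subgroup $G\le\Frac(R_k)^\times$ of infinite rank with $G\cap k^\times=\{1\}$. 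For each $i$ the two $k$-algebra maps $e_i$ and $r\mapsto r\otimes1$ from $R_k$ to $S_i:=R_k\otimes_k\hat B_i$ carry nonzero elements to regular elements (the zero-divisors of $S_i$ are precisely its nilradical, and $e_i(r)$ reduces to $\sigma_i(r)\ne0$), and $e_i(a_j)S_i=(a_j\otimes1)S_i$. So Theorem~\ref{main-algebra} applies to $(R_k,S_i,e_i,r\mapsto r\otimes1)$ with the sequence $(a_j)$; applying it to subsequences as well, exactly as in the argument for Reduction~\ref{maxass}, shows that $G/(G\cap F_i)$ has finite rank, where $F_i:=\{g\in\Frac(R_k):e_i(g)=g\otimes1\}$.

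Finally, $\Frac(R_k)^{\mathcal D}=\bigcap_{i=0}^tF_i$, and $G/\bigl(G\cap\bigcap_iF_i\bigr)$ embeds into $\prod_iG/(G\cap F_i)$, hence has finite rank; since $G$ has infinite rank, $G\cap\Frac(R_k)^{\mathcal D}$ is nontrivial. Any nontrivial $g$ in it is a $\mathcal D$-constant lying in $\Frac(R_k)\setminus k$, hence in $K(X)\setminus K$ because $\Frac(R_k)\cap K=k$. In short, all the substance is borrowed — from Proposition~\ref{d-ideal} and Theorems~\ref{main-algebra} and~\ref{main-nr} — and the one point demanding care is that the possible reducibility of $Z$ forces the conclusion to be strengthened from ``$G\cap F_i$ is nontrivial'' to ``$G/(G\cap F_i)$ is of finite rank'' and then intersected over $i$, which is what reconciles the differential and difference contributions into a single $\mathcal D$-constant.
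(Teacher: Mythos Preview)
Your proof is correct and follows the same overall strategy as the paper's --- set up $Z=\spec(R\otimes_KB)$ with $\phi_1$ induced by $e$ and $\phi_2$ by $r\mapsto r\otimes1$, translate the hypothesis via Proposition~\ref{d-ideal}, and feed this into Theorem~\ref{main-nr}/\ref{main-algebra}. The paper simply asserts that $Z_{\red}=X$, hence $Z$ is irreducible, and applies Theorem~\ref{main-nr} directly; but as you rightly observe, this fails whenever $B$ is not local --- already in the difference example~\ref{d-diff}, $Z$ is two disjoint copies of $X$. Your decomposition into the local factors $Z_i$, together with the upgrade from ``$G\cap F_i$ nontrivial'' to ``$G/(G\cap F_i)$ of finite rank'' (via the subsequence argument of Reduction~\ref{maxass}) and the intersection over $i$, is a correct repair.

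A shorter route is also available: Theorem~\ref{main-algebra} is stated without assuming the nilradical of $S$ is prime, so once you have carried out the descent to $k$ you may apply it a single time to the full $S=R_k\otimes_k\hat B$. Its minimal primes correspond to the local factors $\hat B_i$, each quotient $S/P$ is isomorphic to $R_k$, and the regularity hypothesis on $e$ and $r\mapsto r\otimes1$ holds componentwise as you verified. This bypasses Theorem~\ref{main-nr} (whose irreducibility hypothesis is the obstacle) and avoids the finite-rank bookkeeping, though your argument has the virtue of making explicit how the individual $F_i$-constraints combine into a genuine $\mathcal D$-constant.
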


\begin{proof}
Write $X=\spec(R)$ with $(R,e)$ a $\mathcal D$-ring.
Let $Z:=\spec\big(R\otimes_KB)$, $\phi_1:Z\to X$ the morphism induced by the $K$-algebra homomorphism $e:R\to R\otimes_KB$, and $\phi_2:Z\to X$ induced by $r\mapsto r\otimes 1$.
Note that $Z_{\operatorname{red}}=X$ and hence $Z$ is irreducible.
Moreover $\phi_1,\phi_2$ both restrict to the identity on $Z_{\operatorname{red}}$, and hence are dominant.
So Theorem~\ref{main-nr} applies.
By Proposition~\ref{d-ideal}, if $H\subseteq X$ is a totally $\mathcal D$-invariant hypersurface with ideal $I=I(H)$, then $e(I)(R\otimes_KB)=I(R\otimes_KB)$.
This means that $\phi_1^{-1}(H)=\phi_2^{-1}(H)$.
Hence, condition~(1) of Theorem~\ref{main-nr} holds with $U=X$ and $V=Z$.
The theorem gives us $g\in K(X)\setminus K$ with $g\phi_1=g\phi_2$.
That is, $e(g)=g\otimes 1$ under the canonical extension of $e$ to $\Frac(R)\to\Frac(R)\otimes_KB$.
We have found a nonconstant $\mathcal D$-constant rational function on $X$, as desired.
\end{proof}

When $\mathcal D$ is given by $B=K[\epsilon]/\epsilon^2$ as in Example~\ref{d-der} we recover the following consequence of a theorem of Jouanolou~\cite{jouanolou} on solutions to rational foliations: {\em an algebraic $D$-variety with infinitely many $D$-hypersurfaces admits a nonconstant rational first integral}.
This statement is the finite-dimensional case of Proposition~2.3 of Hrushovski's unpublished manuscript~\cite{hrushovski-jouanolou}; or, for a published proof, note that it is precisely the  $(m,r)=(1,0)$ case of~\cite[Theorem 4.2]{freitag-moosa}.
In fact, we get (a new proof of) the $r=0$ case of~\cite[Theorem 4.2]{freitag-moosa} for arbitrary $m\geq 1$ by applying  Theorem~\ref{main-d} to the case when $\mathcal D$ is given by $B=K[\epsilon_1,\dots,\epsilon_m]/(\epsilon_1,\dots,\epsilon_m)^2$.

\bigskip
\section{The reduced case and an application to rational dynamics}
\label{section-red}

\noindent
In this section we improve Theorem~\ref{main-nr} in the case when $Z$ is also a (reduced) algebraic variety, and thereby complete the proof of Theorem~\ref{main}.
We also deduce the application to rational dynamics discussed in the introduction.

First, for any function $\phi:Z\to X$ and any subset $H\subseteq X$, let us denote by $\phi^{-1}[H]$ the {\em set-theoretic inverse image} of the set $H$.
This is to avoid confusion with the notation $\phi^{-1}(H)$ we are using for the scheme-theoretic inverse image.
Now, suppose $\phi:Z\to X$ is a dominant rational map between algebraic varieties.\footnote{Recall that we view algebraic varieties as integral algebraic schemes, and so $\phi$ includes in particular a (partial) function on the underlying sets of these schemes.}
For a hypersurface $H\subseteq X$ with $H\cap\im(\phi)$ Zariski dense in $H$, by the {\em proper transform} of $H$, denoted by $\phi^{*}H$, we mean the hypersurface on $Z$ that is the union of those irreducible components of the Zariski closure of $\phi^{-1}[H]$ in $Z$ that project dominantly onto some irreducible component of $H$.

\begin{theorem}
\label{main-reduced}
Suppose $Z$ and $X$ are algebraic varieties and $\phi_1,\phi_2:Z\to X$ are dominant rational maps, over an algebraically closed field $K$ of characteristic zero.
Then the following are equivalent:
\begin{itemize}
\item[(1)]
There exist infinitely many hypersurfaces $H$ on $X$ satisfying
$\phi_1^{*}H=\phi_2^{*}H$.
\item[(2)]
There exists $g\in K(X)\setminus K$ such that $g\phi_1=g\phi_2$.
\end{itemize}
\end{theorem}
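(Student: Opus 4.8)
The plan is to reduce Theorem~\ref{main-reduced} to Theorem~\ref{main-nr}, which is already in hand, by passing to open subsets of $Z$ and $X$ over which $\phi_1$ and $\phi_2$ restrict to smooth morphisms; the point is that over such open sets the proper transform and the scheme-theoretic inverse image of a reduced hypersurface coincide.

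The implication $(2)\Rightarrow(1)$ is essentially the same as the corresponding implication of Theorem~\ref{main-nr}. First I would choose dense open $V\subseteq Z$ and $U\subseteq X$ such that $\phi_1,\phi_2$ restrict to dominant morphisms $\phi_1^V,\phi_2^V:V\to U$ and $g$ restricts to a dominant morphism $g^V:U\to\mathbb A^1$. Then $g^V\circ\phi_1^V$ and $g^V\circ\phi_2^V$ coincide, and for all but finitely many $c\in K$ the Zariski closure $H_c\subseteq X$ of $(g^V)^{-1}(c)$ is a hypersurface; these $H_c$ are pairwise distinct, and $\phi_1,\phi_2$ have the same set-theoretic inverse image of $H_c$, hence the same proper transform --- since $\phi_i^{*}H_c$ depends only on the set-theoretic inverse image of $H_c$ and on which of its components dominate $H_c$.

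For $(1)\Rightarrow(2)$: By generic smoothness in characteristic zero --- applied to $\phi_i$ restricted to the smooth locus of $Z$ --- I can find dense open $V\subseteq Z$ and $U\subseteq X$ so that $\phi_1,\phi_2$ restrict to dominant \emph{smooth} morphisms $\phi_1^V,\phi_2^V:V\to U$. The key observation is that for any reduced hypersurface $D$ on $U$, the scheme-theoretic inverse image $(\phi_i^V)^{-1}(D)$ is again reduced (a scheme smooth over a reduced scheme is reduced), is pure of codimension one, and has every component dominating a component of $D$ (by flatness and a dimension count); consequently $(\phi_i^V)^{-1}(D)$ coincides, as a closed subscheme, with $(\phi_i^{*}H)\cap V$ for any hypersurface $H$ on $X$ with $H\cap U=D$. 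In particular, whenever $\phi_1^{*}H=\phi_2^{*}H$ we get $(\phi_1^V)^{-1}(H\cap U)=(\phi_2^V)^{-1}(H\cap U)$. It then remains to see that the hypotheses of $(1)$ still supply infinitely many hypersurfaces on $U$ of this form: if the hypersurfaces $H\cap U$ took only finitely many values as $H$ ranged over the given infinite family, then --- because only finitely many prime divisors of $X$ are contained in the proper closed set $X\setminus U$ --- only finitely many of the $H$ could be distinct, a contradiction. Discarding the finitely many exceptions, condition $(1)$ of Theorem~\ref{main-nr} holds for these $V,U$, and its condition $(2)$, which is our $(2)$, follows.

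The step I expect to require the most care is the identification, on the smooth locus, of $(\phi_i^V)^{-1}(D)$ with $(\phi_i^{*}H)\cap V$: one must check that restricting to $V$ neither introduces components of the scheme-theoretic inverse image failing to dominate $H$ nor discards components of the proper transform that meet $V$, and that the multiplicity-free nature of the smooth pullback matches the reducedness of the proper transform. The rest is bookkeeping together with the appeal to Theorem~\ref{main-nr}.
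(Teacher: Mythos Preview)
Your argument is correct, and it takes a genuinely different route from the paper's. The paper first descends the data---including the invariant hypersurfaces---to a finitely generated subfield~$k$, and then proves a separate Proposition~\ref{transform}: over such a~$k$ one can shrink~$Z$ and~$X$ so that scheme-theoretic inverse image and proper transform agree for all but finitely many hypersurfaces. That proposition is established by factoring $\phi_i$, after localisation, as a polynomial extension followed by an \'etale map (Noether normalisation plus further localisation), and then checking by hand (Lemmas~\ref{radical-poly} and~\ref{radical}) that radical height-one ideals stay radical along each piece. Your use of generic smoothness bypasses all of this: once $\phi_i^V:V\to U$ is smooth, the pullback of a reduced hypersurface is automatically reduced, pure of codimension one, and with every component dominating a component downstairs, so the identification with the proper transform is immediate. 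You also avoid the preliminary descent to a finitely generated field, since that happens anyway inside the proof of Theorem~\ref{main-nr}.

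The trade-off is that the paper's more hands-on Proposition~\ref{transform} survives in positive characteristic once one assumes geometrically reduced generic fibres (see Section~\ref{section-charp}), whereas your appeal to generic smoothness is a genuinely characteristic-zero argument. For the statement at hand, though, your route is shorter and more conceptual. The point you flag as needing care---that $(\phi_i^V)^{-1}(H\cap U)=(\phi_i^{*}H)\cap V$---does go through: smoothness (hence flatness and openness) forces every component of the pullback to dominate a component of $H\cap U$, and conversely any component of $\phi_i^{*}H$ meeting~$V$ must dominate a component of~$H$ that meets~$U$, since $\phi_i(V)\subseteq U$.
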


Note that when $Z=X$, $\phi_1=\phi$, and $\phi_2=\id$, this theorem says that if a rational dynamical system $(X,\phi)$ has infinitely many totally invariant hypersurfaces, then $\phi$ preserves a nonconstant rational function.
That is, we recover the algebraic case of~\cite[Corollary~3.3]{cantat}.
See also the closely related~\cite[Theorem~1.2]{BRS}.
But we can do better.
By a {\em rational finite self-correspondence} we will mean an algebraic variety $X$ equipped with a closed irreducible subvariety $\Gamma\subseteq X\times X$ such that the co-ordinate projections $\pi_1,\pi_2:\Gamma\to X$ are dominant and generically finite-to-one.
Note that we get a rational dynamical system by taking $\Gamma$ to be the graph of a dominant rational self-map.
A Zariski closed subset $V\subseteq X$ is {\em totally invariant} if its proper transforms in $\Gamma$ by the two co-ordinate projections agree.
A rational function $g$ on $X$ is {\em preserved by $\Gamma$} if $g\pi_1=g\pi_2$.

\begin{corollary}
\label{main-corr}
Suppose $(X,\Gamma)$ is a rational finite self-correspondence with infinitely many totally invariant hypersurfaces.
Then $\Gamma$ preserves a nonconstant rational function on $X$.
\end{corollary}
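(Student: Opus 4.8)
The plan is to read off Corollary~\ref{main-corr} directly from Theorem~\ref{main-reduced} by specialising its hypotheses to the correspondence $\Gamma$. Concretely, I would take $Z:=\Gamma$, which is an algebraic variety since it is by assumption a closed irreducible (hence integral) subscheme of $X\times X$, and I would set $\phi_1:=\pi_1$ and $\phi_2:=\pi_2$, the two coordinate projections $\Gamma\to X$. These are dominant by hypothesis (indeed regular morphisms, which is more than Theorem~\ref{main-reduced} requires), so the triple $(Z,\phi_1,\phi_2)$ satisfies the standing assumptions of that theorem.

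Next I would match the notion of total invariance with condition~(1) of Theorem~\ref{main-reduced}. By definition, a hypersurface $H\subseteq X$ is totally invariant for $(X,\Gamma)$ exactly when its proper transforms under $\pi_1$ and $\pi_2$ coincide, i.e. $\pi_1^*H=\pi_2^*H$. The one small point needing care is that $\pi_i^*H$ is only defined when $H\cap\im(\pi_i)$ is Zariski dense in $H$; since $\pi_i$ is dominant, $\im(\pi_i)$ contains a dense open subset of $X$, so its complement meets only finitely many prime divisors of $X$, and hence $H\cap\im(\pi_i)$ is dense in $H$ for all but finitely many hypersurfaces $H$. Discarding those finitely many exceptions, the infinitely many totally invariant hypersurfaces furnished by the hypothesis give infinitely many hypersurfaces $H$ on $X$ with $\pi_1^*H=\pi_2^*H$, which is precisely condition~(1) of Theorem~\ref{main-reduced}.

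Condition~(2) of Theorem~\ref{main-reduced} then yields $g\in K(X)\setminus K$ with $g\pi_1=g\pi_2$; by definition this says exactly that $g$ is a nonconstant rational function on $X$ preserved by $\Gamma$, which completes the proof.

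As for obstacles: there is really none of substance, since all the work has been carried out in Theorem~\ref{main-reduced} (equivalently, in Theorem~\ref{main-algebra} together with the reduced-case argument of Section~\ref{section-red}). The only thing to verify is the bookkeeping above — that $\Gamma$ qualifies as a reduced irreducible algebraic scheme and that the proper-transform reformulation of total invariance is literally an instance of condition~(1). It is worth noting that the generic finiteness of $\pi_1,\pi_2$ is never used, so the conclusion in fact holds verbatim for arbitrary dominant self-correspondences; we state it for finite self-correspondences only because that is the case of primary interest and the one directly generalising Cantat's theorem.
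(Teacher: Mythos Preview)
Your proposal is correct and is precisely the paper's own argument: apply Theorem~\ref{main-reduced} with $Z=\Gamma$, $\phi_1=\pi_1$, $\phi_2=\pi_2$. The additional bookkeeping you include (well-definedness of the proper transforms and the remark that generic finiteness is unused) is accurate and consistent with the paper, which immediately after the corollary notes that Theorem~\ref{main-reduced} is exactly the extension to arbitrary self-correspondences.
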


\begin{proof}
Apply Theorem~\ref{main-reduced} to $Z:=\Gamma$, $\phi_1:=\pi_1$, and $\phi_2:=\pi_2$.
\end{proof}

In fact, Theorem~\ref{main-reduced} is precisely the generalisation of the above corollary to arbitrary self-correspondences -- where the co-ordinate projections need not be generically finite-to-one.
As such, it can be viewed as a statement in generalised rational dynamics.

In order to deduce Theorem~\ref{main-reduced} from Theorem~\ref{main-nr} we need to observe that when working over a finitely generated field, and restricting attention to sufficiently small Zariski open sets, the scheme-theoretic inverse image and the proper transform agree on hypersurfaces.
This is Proposition~\ref{transform} below, and may very well be known, but we could not find it in the literature.
Our proof will rely on the following elementary, and certainly well-known, lemmas in commutative algebra.

\begin{lemma}
\label{radical-poly}
Suppose $A$ is a noetherian integral domain and $B=A[x_1,\dots,x_n]_g$ is the localisation of a polynomial algebra over $A$.
If $I\subseteq A$ is a radical ideal then so is~$IB$.
Moreover, if $A,B$ are in addition finitely generated $k$-algebras for some field $k$, $\phi:\spec(B)\to\spec(A)$ is the induced morphism of $k$-varieties, $V:=V(I)\subseteq\spec A$ is the corresponding subvariety, and $g\notin IA[x_1,\dots,x_n]$, then $\phi^{-1}(V)=\phi^{*}V$.
\end{lemma}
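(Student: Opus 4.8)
The plan is to prove the two assertions of Lemma~\ref{radical-poly} in turn, the first being purely algebraic and the second a geometric consequence.

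For the first claim, that $IB$ is radical when $I$ is: since localisation is flat and commutes with taking radicals, it suffices to show that $I\cdot A[x_1,\dots,x_n]$ is radical in the polynomial ring, i.e. I may drop the localisation at $g$. Here the clean approach is to pass to the quotient: $A[x_1,\dots,x_n]/I\cdot A[x_1,\dots,x_n]\cong (A/I)[x_1,\dots,x_n]$, so the statement becomes that a polynomial ring over a reduced noetherian ring is reduced. This is standard: if $A/I$ is reduced, it embeds into a finite product of its quotients by minimal primes (noetherianity gives finitely many minimal primes and $\bigcap$ of them is the nilradical, which is zero), each an integral domain, hence into a product of fields (their fraction fields); a polynomial ring over a field is a domain, hence reduced, a finite product of reduced rings is reduced, and a subring of a reduced ring is reduced. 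So $(A/I)[x_1,\dots,x_n]$ is reduced, giving that $I\cdot A[x_1,\dots,x_n]$ and therefore $IB$ is radical.

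For the second claim I would argue as follows. The scheme-theoretic inverse image $\phi^{-1}(V)$ is by definition the closed subscheme of $\spec(B)$ defined by $IB$, which by the first part is a radical ideal, so $\phi^{-1}(V)$ is already reduced, i.e. it coincides with the Zariski closure of the set-theoretic preimage $\phi^{-1}[V]$ as a closed subvariety (this uses that $\phi$ is flat, being a localisation of a polynomial extension, so that the underlying topological space of $\phi^{-1}(V)$ really is $\phi^{-1}[V]$ and not something smaller). It then remains to check that every irreducible component of $\phi^{-1}(V)$ dominates a component of $V$; since $B$ is faithfully flat over $A$ after the harmless localisation (here is where $g\notin I\cdot A[x_1,\dots,x_n]$ enters: it guarantees that $V(I)$ is not thrown away, so $\phi$ restricted to $\phi^{-1}(V)\to V$ remains dominant on each piece), going-down holds for $A\to B$, and a minimal prime of $IB$ lies over a minimal prime of $I$ — equivalently each component of $\phi^{-1}(V)$ maps onto a component of $V$. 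Hence $\phi^{-1}(V)$ is exactly the union of components of the Zariski closure of $\phi^{-1}[V]$ that project dominantly onto a component of $V$, which is the definition of $\phi^*V$.

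The main obstacle, and the only point requiring genuine care rather than invocation of standard facts, is the dominance/going-down step: one must make sure that after inverting $g$ no component of $V$ disappears and that no ``vertical'' component (one mapping into a proper subvariety of $V$) is created in the fibre. The hypothesis $g\notin I\cdot A[x_1,\dots,x_n]$ is precisely what rules out the first pathology, and flatness of $\phi$ (a localised polynomial extension is flat) together with the fact that flat morphisms are open — so $\phi(\phi^{-1}(V))$ is a dense constructible subset of $V$, hence contains a dense open of each component — rules out the second. I would present this as a short paragraph invoking flatness and going-down rather than writing fibre-dimension estimates by hand. Everything else is bookkeeping with radical ideals and the identification $\phi^{-1}(V)=\phi^{-1}[V]$ at the level of topological spaces.
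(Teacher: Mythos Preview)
Your argument is correct and close to the paper's, though you organise the second part differently. For radicality, the paper argues directly that (a) the extension of a prime stays prime in $A[x_1,\dots,x_n]$ and (b) extension commutes with finite intersection of primes; your quotient-and-embed formulation is an equivalent repackaging. For the ``moreover'' clause, the paper explicitly identifies the minimal primes of $IB$ as the $PB$ for $P$ a minimal prime of $I$ (those with $g\notin PA[x_1,\dots,x_n]$ remain proper primes, the others become~$B$), so the irreducible components of $\phi^{-1}[V]$ are exactly of the form $\phi^{-1}[W]$ for $W$ a component of~$V$. You instead invoke going-down for the flat extension $A\to B$ to show that every minimal prime of $IB$ contracts to a minimal prime of $I$; this is a slightly more abstract route and works equally well.

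Two small corrections. First, $B$ is flat but not faithfully flat over $A$ (any prime of $A[x_1,\dots,x_n]$ containing $g$ is lost), so drop ``faithfully''; going-down requires only flatness, which you have. Second, your account of where the hypothesis $g\notin IA[x_1,\dots,x_n]$ enters is a bit muddled: the going-down argument already shows that each component of $\phi^{-1}(V)$ dominates a component of $V$, with no reference to $g$. The condition on $g$ serves only to guarantee $IB\neq B$, i.e.\ that $\phi^{-1}(V)$ is nonempty and the proper transform makes sense. The appeal to openness of flat maps in your final paragraph is therefore unnecessary.
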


\begin{proof}
It is straightforward to check that localisation preserves radicality.
That taking polynomial extensions also preserve radicality follows from:
\begin{itemize}
\item[(a)]
if $P\subset A$ is a prime ideal then so is $PA[x_1,\dots,x_n]$, and
\item[(b)]
for prime ideals $P_1,\dots,P_\ell$ of $A$,
$$\big(\bigcap_{i=1}^\ell P_i\big)A[x_1,\dots,x_n]=\bigcap_{i=1}^\ell\big(P_iA[x_1,\dots,x_n]\big).$$
\end{itemize}
The ``moreover" clause follows by first noting that since $IB$ is radical, the scheme-theoretic and set-theoretic inverse images of $V=V(I)$ agree.
Moreover, 
if $P$ is a minimal prime ideal of $A$ containing $I$ then, by~(a) and the fact that $g\notin PA[x_1,\dots,x_n]$, $PB$ is a prime ideal.
That is, the irreducible components of $\phi^{-1}[V]$ are of the form $\phi^{-1}[W]$ where $W$ is an irreducible component of $V$.
Hence the proper transform agrees with the set-theoretic inverse image of $V$.
\end{proof}

\begin{lemma}
\label{radical}
If $A\subseteq B$ is an \'etale extension of noetherian unique factorisation domains, and $I$ is a height one radical ideal of $A$, then $IB$ is radical.
Moreover, if $A,B$ are in addition finitely generated $k$-algebras for some field $k$, $\phi:\spec(B)\to\spec(A)$ is the induced morphism of $k$-varieties, and $H:=V(I)$ is the corresponding hypersurface on $\spec(A)$, then $\phi^{-1}(H)=\phi^{*}H$.
\end{lemma}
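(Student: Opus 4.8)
The plan is to reduce everything to two standard facts about \'etale ring maps: that they are flat, and that flat base change preserves reducedness (together with the fact that an algebra \'etale over a field is reduced, being a finite product of finite separable field extensions). Write $\phi^{\#}:A\to B$ for the structure map.

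First I would prove that $IB$ is radical. Since $A$ is a UFD and $I$ is radical of height one, we may write $I=(f)$ with $f=p_{1}\cdots p_{m}$ a product of pairwise non-associate primes of $A$; in particular $C:=A/I$ is reduced and noetherian, so it embeds into the finite product $\prod_{s}\kappa(\mathfrak p_{s})$ of the residue fields at its (finitely many) minimal primes $\mathfrak p_{s}$. The map $C\to D:=B/IB$ is \'etale, being the base change of $A\to B$ along $A\to C$, and in particular flat. Tensoring the above injection with $D$ over $C$ --- exact by flatness, and commuting with the finite product --- realises $D$ as a subring of $\prod_{s}\bigl(D\otimes_{C}\kappa(\mathfrak p_{s})\bigr)$. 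Each factor is \'etale over a field, hence reduced, so $D$ is reduced; that is, $IB$ is radical.

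For the ``moreover'' clause, since $IB$ is radical the scheme-theoretic inverse image $\phi^{-1}(H)=V(IB)$ already carries its reduced structure, and its underlying set is the closed set $\phi^{-1}[H]$; so it suffices to check, set-theoretically, that every irreducible component of $V(IB)$ projects dominantly onto an irreducible component of $H=V(p_{1})\cup\cdots\cup V(p_{m})$. (Flatness together with finite type makes $\phi$ open, hence, as $A\hookrightarrow B$, dominant with dense image; this is also what makes $\phi^{*}H$ well defined.) So let $\mathfrak q$ be a minimal prime of $IB$. Then $\mathfrak q$ is minimal over some $p_{i}B$, and if $p_{i}B\neq B$ then Krull's principal ideal theorem gives $\operatorname{ht}_{B}(\mathfrak q)=1$. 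Put $\mathfrak p:=(\phi^{\#})^{-1}(\mathfrak q)\supseteq(p_{i})$. Because $A\to B$ is \'etale, $A_{\mathfrak p}\to B_{\mathfrak q}$ is a flat local homomorphism with $\mathfrak p B_{\mathfrak q}=\mathfrak q B_{\mathfrak q}$, so the dimension formula for flat local maps gives $\operatorname{ht}(\mathfrak p)=\dim A_{\mathfrak p}=\dim B_{\mathfrak q}=1$; a height-one prime containing the height-one prime $(p_{i})$ must equal it, so $V(\mathfrak q)$ dominates $V(p_{i})$. (If instead $p_{i}B=B$, then nothing lies over that component, which is harmless.) Hence every component of $\phi^{-1}(H)$ dominates a component of $H$, and therefore $\phi^{-1}(H)=\phi^{*}H$.

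The step requiring the most care is this last dominance argument: one must be sure that a minimal prime of $IB$ genuinely has height one in $B$, and that \'etaleness transfers this to its contraction in $A$ --- everything else is formal bookkeeping. (Incidentally, only flatness and unramifiedness of the extension are used, and the UFD hypothesis is needed only on $A$, in order to write $I=(f)$.)
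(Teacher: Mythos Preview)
Your proof is correct, and it takes a genuinely different route from the paper's. The paper argues by explicit factorisation: it writes each $p_i$ in the UFD $B$ as $u_i\prod_j q_{i,j}^{n_{i,j}}$, uses going-down for flat extensions to see that each $Q_{i,j}=q_{i,j}B$ contracts to $P_i=p_iA$, and then invokes unramifiedness in the form $P_iB_{Q_{i,j}}=Q_{i,j}B_{Q_{i,j}}$ to force all $n_{i,j}=1$, whence $IB$ is radical and each component dominates. Your approach is more structural: for radicality you use that \'etale base change preserves reducedness (embedding $A/I$ into a product of residue fields, tensoring up by flatness, and noting \'etale algebras over fields are reduced), and for dominance you combine Krull's principal ideal theorem with the dimension formula for flat local maps with zero-dimensional fibre. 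Your argument is cleaner in that it never needs the UFD hypothesis on $B$---only on $A$, to get $I$ principal---whereas the paper's explicit factorisation of $p_i$ in $B$ uses that $B$ is a UFD. The paper's proof, on the other hand, is slightly more elementary in that it avoids the local dimension formula. Your parenthetical about the possibility $p_iB=B$ is a nice touch; the paper tacitly allows the factorisation of $p_i$ in $B$ to be empty in that case.
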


\begin{proof}
Let $P_1,\dots, P_\ell$ be the distinct minimal prime ideals containing $I$ in~$A$.
Since $A$ is a UFD and each $P_i$ is of height one, we have that $P_i=p_iA$ for some irreducible $p_i\in A$.
Since $I=P_1\cap\cdots\cap P_\ell$ and the $p_i$ are mutually non-associate, we get that $I=(p_1\cdots p_\ell)A$.
Let $p_i=u_iq_{i,1}^{n_{i,1}}\cdots q_{i,m_i}^{n_{i, m_i}}$ be the prime factorisation of $p_i$ in the UFD $B$.
Now, each $Q_{i,j}:=q_{i,j}B$ is a minimal prime ideal of $P_iB$, and hence by the going down theorem for flat extensions, $Q_{i,j}$ lies over $P_i$.
In particular, the $q_{i,j}$ are non-associate even as $i$ varies.
But moreover, as $B$ over $A$ is unramified, $P_iB_{Q_{i,j}}=Q_{i,j}B_{Q_{i,j}}$.
That is,  $q_{i,j}^{n_{i,j}}B_{Q_{i,j}}=q_{i,j}B_{Q_{i,j}}$.
This forces each $n_{i,j}=1$.
So $IB=(\prod_{i=1}^\ell\prod_{j=1}^{m_i}q_{i,j})B$ is radical.

For the ``moreover" clause, again we first observe that the set-theoretic and scheme-theoretic inverse images of $H=V(I)$ agree because $IB$ is radical.
Now, the irreducible components of $\phi^{-1}[H]=V(IB)$ are the $V(q_{i,j})$.
That $Q_{i,j}$ lies over $P_i$ says exactly that $V(q_{i,j})$ projects dominantly onto $V(p_i)$, which is an irreducible component of $H$.
Hence $\phi^{-1}[H]=f^*H$, as desired.
\end{proof}

\begin{proposition}
\label{transform}
Suppose $\phi:Z\to X$ is a dominant rational map between algebraic varieties over a finitely generated field $k$.
There exist nonempty Zariski open subsets $V\subseteq Z$ and $U\subseteq X$ such that the restriction $\phi^V:V\to U$ is a dominant regular morphism, and for all but finitely many hypersurfaces $H$ on $U$, $(\phi^V)^{-1}(H)=(\phi^V)^{*}H$.
\end{proposition}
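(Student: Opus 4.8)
The plan is to choose the open sets $V\subseteq Z$ and $U\subseteq X$ so that the restriction $\phi^V$ factors as $V\xrightarrow{\alpha}W\xrightarrow{\beta}U$ with $\alpha$ étale and $\beta$ the coordinate projection from a principal open subscheme of an affine space over $U$, and then to transport hypersurfaces from $U$ to $V$ one step at a time: across $\beta$ using Lemma~\ref{radical-poly}, and across $\alpha$ using Lemma~\ref{radical}.

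I would first pass to affine charts, so that $X=\spec A$ and $Z=\spec B$ with $A,B$ finitely generated integral domains over $k$ and $\phi$ induced by an inclusion $A\hookrightarrow B$ (possible because $\phi$ is dominant). Since $k$ is finitely generated I may, by~\cite[Lemma~6.11]{bllsm} and after localising $A$, assume $A$ is a UFD. As $k$ has characteristic zero, $\Frac(B)/\Frac(A)$ is separably generated; fixing a separating transcendence basis $t_1,\dots,t_d$ and, by the primitive element theorem, an element $\theta$ with $\Frac(B)=\Frac(A)(t_1,\dots,t_d)(\theta)$ and separable monic minimal polynomial $m(T)$ over $\Frac(A)(t_1,\dots,t_d)$, I would clear denominators: with $P:=A[t_1,\dots,t_d]$ and $q\in P$ a common denominator of the coefficients of $m$, set $P':=P_q$ and $C':=\bigl(P'[T]/(m)\bigr)_{m'(T)}$. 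Then $P'$ is a UFD, $C'$ is an integral domain (as $m$ is irreducible over $\Frac(P')$) with $\Frac(C')=\Frac(B)$, and $P'\hookrightarrow C'$ is étale. Since $B$ and $C'$ are finitely generated $k$-algebras with the same fraction field they share a common affine open inside $\Frac(B)$, so after shrinking $Z$ I may take $B=C'_c$ for a suitable $c$; shrinking once more and using~\cite[Lemma~6.11]{bllsm} again, I may assume in addition that $B$ is a UFD — still étale over $P'$, since localisations of an étale $P'$-algebra remain étale. Finally, to control the image, let $Z_1$ be the union of the codimension-one irreducible components of the Zariski closure of $\spec A\setminus\im(\phi^V)$ (a proper closed subset, as $\im(\phi^V)$ contains the generic point), write $Z_1=V(h)$, and replace $\spec A$ by the principal open $D(h)$ and $B$ by $B_h$; this keeps both rings UFDs, keeps $B_h$ étale over $A_h[t_1,\dots,t_d]_q$, and now makes $\spec A\setminus\im(\phi^V)$ of codimension $\ge 2$. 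Renaming, I will have $U=\spec A$ with $A$ a UFD, $W=\spec P'$ with $P'=A[t_1,\dots,t_d]_q$ a UFD (so $W$ is a principal open of $\mathbb A^d_U$), $V=\spec B$ with $B$ a UFD étale over $P'$, and $\phi^V=\beta\circ\alpha$ a dominant regular morphism.

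Next, given any hypersurface $H\subseteq U$: as $A$ is a UFD, $I(H)=pA$ with $p$ square-free. Because $\spec A\setminus\im(\phi^V)$ has codimension $\ge2$ while $H$ is of pure codimension one, no component of $H$ lies in $\spec A\setminus\im(\phi^V)$; hence $H\cap\im(\phi^V)$ is dense in $H$, so $(\phi^V)^{*}H$ is defined, and, since $\im\beta$ omits only the common zero set of the coefficients of $q$ — a subset of $\spec A\setminus\im(\phi^V)$, hence of codimension $\ge2$ — that same fact forces $p$ not to divide all coefficients of $q$, i.e.\ $q\notin p\,A[t_1,\dots,t_d]$. Thus Lemma~\ref{radical-poly} applies to $\beta$ and $H$ and gives $\beta^{-1}(H)=\beta^{*}H=:H'$, a hypersurface on $W$ with radical, height-one ideal $pP'$; and then Lemma~\ref{radical} applies to the étale extension $P'\hookrightarrow B$ of UFDs and the hypersurface $H'$ and gives $\alpha^{-1}(H')=\alpha^{*}H'$. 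Composing, and using that scheme-theoretic inverse image is functorial, $(\phi^V)^{-1}(H)=\alpha^{-1}\bigl(\beta^{-1}(H)\bigr)=\alpha^{-1}(H')=\alpha^{*}(H')=\alpha^{*}(\beta^{*}H)$. By the explicit descriptions in the two lemmas this scheme is reduced (its ideal is the radical ideal $pB$) and each of its irreducible components dominates a component of $H'=\beta^{*}H$, hence a component of $H$; therefore it coincides with $(\phi^V)^{*}H$. Since $H$ was arbitrary, this gives the equality for every hypersurface on $U$, which is more than the asserted ``all but finitely many''.

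The step I expect to be the main obstacle is the first one: constructing the étale-over-affine-space factorisation while simultaneously (i) keeping the coordinate rings of $U$ and $V$ UFDs, so that Lemmas~\ref{radical-poly} and~\ref{radical} apply, and (ii) arranging that $\phi^V$ omits only a codimension-$\ge2$ subset of $U$. None of this is deep, but one must check that each successive localisation preserves the properties established before it — in particular that ``étale'' survives both the passage from $C'$ to the localisation $B$ and the later shrinking of $U$, and that the UFD property is re-established after each shrink via~\cite[Lemma~6.11]{bllsm}. Once the factorisation is in place, the remaining steps are routine bookkeeping on top of the two lemmas.
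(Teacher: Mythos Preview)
Your proof is correct and follows the same architecture as the paper's: pass to affine charts, make both coordinate rings UFDs via \cite[Lemma~6.11]{bllsm}, factor $\phi^V$ as an \'etale map followed by the projection from a principal open of affine space over $U$, and then apply Lemmas~\ref{radical-poly} and~\ref{radical} in sequence. The only differences are that you build the factorisation explicitly from a separating transcendence basis and the primitive element theorem where the paper invokes Noether normalisation (a cosmetic variation), and that your additional shrinking of $U$ to make $U\setminus\im(\phi^V)$ of codimension $\ge 2$ forces $q\notin pA[t_1,\dots,t_d]$ for \emph{every} square-free $p\in A$, so you obtain $(\phi^V)^{-1}(H)=(\phi^V)^{*}H$ for all hypersurfaces on $U$, slightly improving on the paper's ``all but finitely many'' (the paper's exceptions being the prime divisors of the content of its localising element).
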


\begin{proof}
Replacing $Z$ and $X$ by nonempty Zariski open subsets, it suffices to prove the proposition in the case when $X=\spec(R)$ and $Y=\spec(S)$ are affine $k$-varieties and $\phi$ is a dominant $k$-morphism induced by an injective $k$-algebra homomorphism $f:R\to S$.
Now, as we have used before, that $k$ is a finitely generated field implies that the localisation of $R$ (respectively $S$) at some nonzero element is a unique factorisation domain -- this is~\cite[Lemma~6.11]{bllsm}.
So we may assume that $R$ and $S$ are already unique factorisation domains.

Next, by Noether's normalisation lemma, after replacing $S$ with $S_g$ for some nonzero $g$, we may assume that the homomorphism $f$ factors through injective $k$-algebra homomorphisms $R\to R'$ and $R'\to S$ where $R'$ is a polynomial algebra over $R$ and $S$ is quasi-finite over $R'$.
Localising both $R'$ and $S$ further, we may in fact take $R'\to S$ to be \'etale, though now $R'$ is a finitely generated localisation of a polynomial algebra over $R$.

So we have that $\phi$ factors as $\spec(S)\to \spec(R')\to\spec(R)$.
Since $R'$ is of the form $R[x_1,\dots,x_n]_g$, Lemma~\ref{radical-poly} tells us that if $I=I(H)$ is a radical height one ideal in $R$ then $IR'$ is radical.
Moreover, since $V(g)$ can only contain finitely many hypersurfaces, for all but finitely many such $I$, $IR'$ is again of height one.
Since $S$ is \'etale over~$R'$, Lemma~\ref{radical} now applies and we get that $(IR') S=f(I)S$ is radical.
The ``moreover" clauses in the lemmas tell us that $\phi^{-1}(H)=\phi^*H$.
\end{proof}

\begin{proof}[Proof of Theorem~\ref{main-reduced}]
That~(2) implies~(1) is again clear: the level sets of $g$ will witness~(1).
Suppose~(1) holds.
Exactly as in the beginning of the proof of Theorem~\ref{main-nr} we can find a finitely generated subfield $k\subseteq K$ over which $Z,X,\phi_1,\phi_2$ are defined and such that there is an infinite set $\mathcal H$ of hypersurfaces $H$ on $X$ over $k$ satisfying
$\phi_1^{*}H=\phi_2^{*}H$.
Applying Proposition~\ref{transform} to $(\phi_\nu)_k:Z_k\to X_k$, there exist nonempty Zariski open subsets $V\subseteq Z_k$ and $U\subseteq X_k$ such that, for $\nu=1,2$, the restrictions $(\phi_\nu)_k^{V}:V\to U$ are dominant regular morphisms and for all but finitely many $H\in\mathcal H$,
$((\phi_\nu)_k^V)^{-1}(H_k\cap U)=((\phi_\nu)_k^V)^{*}(H_k\cap U)$.
Noting that proper transforms commute with extending the base field, and observing that $U$ and $V$ avoid only finitely many hypersurfaces on $X$ and $Z$ respectively, we have that for all but finitely many $H\in\mathcal H$, 
$((\phi_1)_k^V)^{*}(H_k\cap U)=((\phi_2)_k^V)^{*}(H_k\cap U)$.
Hence, $((\phi_1)_k^V)^{-1}(H\cap U)=((\phi_2)_k^V)^{-1}(H\cap U)$ for all but finitely many $H\in\mathcal H$.
As scheme-theoretic inverse images also commute with extending the base field, we get that
$(\phi_1^{V_K})^{-1}(H\cap U_K)=(\phi_2^{V_K})^{-1}(H\cap U_K)$.
This witnesses the truth of condition~(1) of Theorem~\ref{main-nr}, and so by that theorem, condition~(2) holds.
\end{proof}

\bigskip
\section{Normal varieties equipped with prime divisors}
\label{section-terminal}

\noindent
Theorem~\ref{main-reduced} is really about the birational geometry of algebraic varieties equipped with a set of hypersurfaces.
We will show how a direct study of this category leads us to an alternative, more geometric and conceptual, proof of Theorem~\ref{main-reduced} in the case when we assume that the fibres of $\phi_1$ and $\phi_2$ are irreducible.
This section is self-contained and largely independent from the rest of the paper.

Fix a field $k$ of characteristic $0$, and let $K:=k^{\alg}$.
We consider the following category $\mathcal V_k$.
The {\em objects} of $\mathcal V_k$ are pairs $(X,S)$ where $X$ is a normal geometrically irreducible algebraic variety over $k$ and $S$ is a set of prime divisors (i.e., irreducible hypersurfaces) on $X_K:=X\times_kK$.
A {\em morphism} $(X,S)\to (Y,T)$ will be a dominant rational map $\phi:X\to Y$ over $k$ whose generic fibre is geometrically irreducible, and such that the symmetric difference between $S$ and $\{\phi_K^*t:t\in T\}$ is finite.
Note that this implies, in particular, that at most finitely many of the prime divisors in $S$ can project dominantly onto $Y$.

\begin{remark}
\label{ptirred}
Here $\phi_K:X_K\to Y_K$ is the base extension of $\phi:X\to Y$ to $K$.
Because the generic fibre of $\phi_K$ is irreducible, the proper transform of all but finitely many prime divisors on $Y_K$ is a prime divisor on $X_K$.
Indeed, if $t$ is an irreducible hypersurface on $Y_K$ that has nonempty intersection with the Zariski open subset of points in $Y_K$ over which the fibre of $\phi_K$ is irreducible, then $\phi_K^*t$ will be irreducible.
\end{remark}

Note that in this category the underlying varieties and rational maps are over~$k$ but the irreducible hypersurfaces they come with may be over the algebraic closure~$K$.
Things would become notationally much clearer if we assumed that $k$ is algebraically closed, but in fact the main theorem will only apply when $k$ is a finitely generated field.
We will systematically use the subscript $_K$ to indicate base extension from $k$ up to $K$.
One exception, however, will be for fields of rational functions: For $X$ a geometrically irreducible algebraic variety over $k$ we will denote by $K(X)$ the field of rational functions on $X_K$.

The category of algebraic varieties over $k$ has a terminal object, namely $\spec(k)$.
At first sight one might think that $(\spec(k),\emptyset)$ is the terminal object in $\mathcal V_k$, but this is not the case.
If $S$ is a {\em finite} set then there is a canonical morphism $(X,S)\to (\spec(k),\emptyset)$, but if $S$ is infinite then it is not hard to see that the existence of a morphism $(X,S)\to(Y,T)$ implies $\dim Y>0$.
We seek to repair this lack of terminal object by asking if the {\em undercategory} of arrows originating at a given $(X,S)$ in $\mathcal V_k$ has a terminal object.

\begin{theorem}
\label{terminal}
Suppose $k$ is finitely generated.
For every object $(X,S)$ in $\mathcal V_k$ there is a morphism $\pi:(X,S)\to(X',S')$ that is terminal with respect to all morphisms originating from $(X,S)$.
That is, given $\phi:(X,S)\to (Y,T)$ there is a unique $\psi:(Y,T)\to(X',S')$ such that $\psi\phi=\pi$.
\end{theorem}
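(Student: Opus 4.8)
The plan is to translate morphisms out of $(X,S)$ into a lattice of subfields of $k(X)$ and to exhibit the terminal morphism as the one attached to the smallest such subfield. Since $k$ is perfect, $X_K$ is normal, so each prime divisor $D$ on $X_K$ has a discrete valuation ring $\mathcal O_{X_K,D}$ inside $K(X)$; write $v_D$ for the associated valuation. The first step is a dictionary: if $\phi\colon X\to Y$ is a dominant rational map over $k$ with geometrically irreducible generic fibre, then for all but finitely many prime divisors $D$ on $X_K$ one has $D=\phi_K^{*}t$ for some prime divisor $t$ on $Y_K$ \emph{if and only if} $v_D$ restricts non-trivially to $K(Y)\subseteq K(X)$, and then $t$ is the centre of $v_D|_{K(Y)}$ on $Y_K$. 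The content here is that a non-trivial restriction $v_D|_{K(Y)}$ is automatically divisorial: the residue field of $v_D$ has transcendence degree $\dim X-1$ over $K$ and is generated over the residue field of $v_D|_{K(Y)}$ by at most $\operatorname{trdeg}_{K(Y)}K(X)=\dim X-\dim Y$ elements, forcing the residue field of $v_D|_{K(Y)}$ to have transcendence degree exactly $\dim Y-1$; Remark~\ref{ptirred}, together with Proposition~\ref{transform} where convenient, disposes of the finitely many exceptional $t$.

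Because $v_D|_{K(Y)}$ depends only on the subfield $k(Y)\subseteq k(X)$ and not on the model $Y$, this shows that a morphism $(X,S)\to(Y,T)$ in $\mathcal V_k$ amounts, up to the evident equivalence (underlying varieties birational, divisor sets of finite symmetric difference), to the subfield $L:=k(Y)\subseteq k(X)$, subject to: $(\mathrm i)$ $L$ is relatively algebraically closed in $k(X)$, and $(\mathrm{ii})$ for all but finitely many $D\in S$, $v_D$ is non-trivial on the compositum $L\cdot K\subseteq K(X)$. (Geometric irreducibility of $Y$ over $k$ and finite generation of $L/k$ are automatic, the former because $X$ is geometrically irreducible over $k$; and $T$ is recovered as $\{t:\phi_K^{*}t\in S\}$ up to finite symmetric difference.) Let $\mathcal L$ be the set of subfields $L$ with $k\subseteq L\subseteq k(X)$ satisfying $(\mathrm i)$ and $(\mathrm{ii})$; it contains $k(X)$, hence is non-empty, and the morphism in the undercategory from the object corresponding to $L_1$ to the one corresponding to $L_2$ exists (necessarily uniquely, since $\phi^{*}$ is injective) precisely when $L_1\supseteq L_2$. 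Thus a terminal object of the undercategory corresponds to a $\subseteq$-smallest element $L^{\flat}$ of $\mathcal L$, and once $L^{\flat}$ is produced the theorem follows: take $X'$ a normal model of $L^{\flat}$ over $k$, $\pi\colon X\to X'$ the induced map, and $S':=\{t:\pi_K^{*}t\in S\}$; then $\pi\colon(X,S)\to(X',S')$ is a morphism by the dictionary and membership $L^{\flat}\in\mathcal L$, and any $\phi\colon(X,S)\to(Y,T)$ factors through $\pi$ via the map $\psi\colon Y\to X'$ induced by $L^{\flat}=k(X')\subseteq k(Y)$, which has geometrically irreducible generic fibre since $L^{\flat}$ is relatively algebraically closed in $k(X)\supseteq k(Y)$, is a morphism in $\mathcal V_k$ because proper transforms compose and $\pi$ is one, and is unique because $\pi^{*}\colon k(X')\hookrightarrow k(X)$ is injective.

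To build $L^{\flat}$: every strictly descending chain in $\mathcal L$ has length at most $\dim X$, since $L_1\supsetneq L_2$ with both relatively algebraically closed in $k(X)$ forces $\operatorname{trdeg}_k L_1>\operatorname{trdeg}_k L_2$ (equality would make $L_1$ algebraic over $L_2$, hence equal to it as $L_2$ is relatively algebraically closed in $k(X)\supseteq L_1$). Thus $\mathcal L$ has minimal elements, and to see that there is exactly one it is enough to know that $\mathcal L$ is stable under intersection: $L_1\cap L_2$ is relatively algebraically closed in $k(X)$, and if it also satisfies $(\mathrm{ii})$ then any two minimal elements $L_1,L_2$ satisfy $L_1\supseteq L_1\cap L_2\in\mathcal L$ and likewise for $L_2$, whence $L_1=L_1\cap L_2=L_2$; this unique minimal element is the smallest.

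The heart of the matter — and the step I expect to be hardest — is therefore the stability of $\mathcal L$ under intersection, i.e.: \emph{for all but finitely many $D\in S$, if $v_D$ is non-trivial on $L_1K$ and on $L_2K$ then it is non-trivial on $(L_1\cap L_2)K$}. Geometrically this says that a prime divisor on $X_K$ that is vertical for two fibrations $X\to Y_1$ and $X\to Y_2$ is vertical for their common coarsening $X\to Z$, $k(Z)=k(Y_1)\cap k(Y_2)$; the idea is that such a $D$ is, near its generic point, cut out simultaneously by the pullback of a local equation of its image in $Y_1$ and by that of its image in $Y_2$, and pushing these relations through the fibre square $Y_1\times_Z Y_2$ forces the image of $D$ in $Z$ to have codimension one. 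Turning this into a proof with uniform control of the finitely many exceptional divisors is where a genuinely birational-geometric argument is needed, and where the hypothesis that $k$ is finitely generated should enter (through the unique-factorisation and finiteness-of-unit-group inputs already used earlier in the paper). Once this is established, the lattice bookkeeping above completes the proof.
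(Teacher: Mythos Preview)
Your lattice-of-subfields reformulation is sound and the dictionary in the first paragraph is essentially correct, but the proof is genuinely incomplete: the intersection-stability of $\mathcal L$ is the entire content of the theorem in your framework, and you do not prove it. You yourself flag this as ``the heart of the matter'' and offer only a geometric heuristic about fibre squares. Without this step you have only shown that $\mathcal L$ has minimal elements, not that it has a least one, and the argument collapses. Your guess that the finitely-generated hypothesis enters via unique factorisation or the unit group is off the mark; those inputs do not obviously bear on whether a divisor vertical for two fibrations is vertical for their common coarsening.

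The paper's argument avoids the intersection question entirely by \emph{constructing} the least element of $\mathcal L$ directly. After passing to a proper model of $X$, it takes $K(X')$ to be the relative algebraic closure in $K(X)$ of the field generated by all rational functions whose divisor is supported on $\widehat S$ (the Galois-closure of $S$). Two ingredients then do the work. First, the Mordell--Weil--N\'eron--Severi theorem (this is where $k$ finitely generated enters) gives that $\operatorname{Cl}(X)$ has finite rank, so any large collection of divisors in $S$ supports a nontrivial principal divisor; this forces all but finitely many elements of $S$ to be vertical for $\pi$, making $\pi$ a morphism in $\mathcal V_k$. Second, properness of $X$ gives that a rational function with no zeros or poles on the generic fibre of $\phi:X\to Y$ is pulled back from $Y$; since elements of $S$ are (cofinitely) vertical for $\phi$, functions supported on $\widehat S$ satisfy this, yielding $K(X')\subseteq K(Y)$ and hence the factorisation. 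This second step is the idea your sketch is missing: rather than intersecting two subfields and hoping the result stays in $\mathcal L$, the paper exhibits a single subfield small enough to sit inside every $k(Y)$.
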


\begin{proof}
First of all, we can embed $X$ as an open subvariety of a normal variety $\overline X$ which is proper over $k$.
Let $\overline S$ denote the set of Zariski closures of elements of $S$ in $\overline X_K$.
The embedding of $X$ in $\overline X$ induces an isomorphism $(X,S)\cong(\overline X,\overline S)$ in $\mathcal V_k$.
It suffices therefore to prove the theorem for $(\overline X,\overline S)$.
That is, we may assume $X$ is proper over $k$.

Our assumption of normality means that for any rational function $f\in K(X)$ we can consider the Weil divisor $\divisor(f)$ on $X_K$.
By the {\em support} of $f$ we mean the set of prime divisors appearing in $\divisor(f)$ with nonzero coefficient -- so it is the set of ``zeros" and ``poles" of $f$.
Given a set $T$ of prime divisors on $X_K$, let us denote by $T^\sharp\subseteq K(X)$ the set of rational functions whose support is contained in $T$, and by $K_T$ the relative algebraic closure of $K(T^\sharp)$ in $K(X)$.

Consider the natural action of $\gal(k)$ on $X_K$ coming from the fact that $X$ is over~$k$.
For any set $T$ of prime divisors on $X_K$, let $\widehat T$ denote the set of all $\gal(k)$-conjugates of elements  of $T$.

We claim that there is a cofinite subset $S_0 \subseteq S$ such that for all cofinite $T \subseteq S_0$, one has $K_{\widehat{T}} = K_{\widehat{S_0}}$.
If this were not true, we would be able to construct a strictly descending infinite chain of cofinite subsets of $S$,
\[ \cdots \subsetneq S_2 \subsetneq S_1 \subsetneq S, \]
such that
\[ \cdots \subsetneq K_{\widehat{S_2}} \subsetneq K_{\widehat{S_1}} \subsetneq K_{\widehat{S}}. \]
Since $K_{\widehat{T}}$ is a relatively algebraically closed subextension of $K(X)$ over $K$ by definition, it would follow that
\[ 0 \leq \cdots < \operatorname{trdeg}(K_{\widehat{S_2}}/K) < \operatorname{trdeg}(K_{\widehat{S_1}}/K) < \operatorname{trdeg}(K_{\widehat{S}}/K)\leq \operatorname{trdeg}(K(X)/K). \]
But $K(X)$ has finite transcendence degree over $K$, so this is impossible.

For such a cofinite $S_0 \subseteq S$, the identity map is an isomorphism between $(X,S)$ and $(X,S_0)$, so it suffices to prove the theorem for $(X,S_0)$.
In other words, by replacing $S$ with such a sufficiently small cofinite subset, we may also assume that $K_{\widehat{T}} = K_{\widehat{S}}$ for all cofinite subsets $T \subseteq S$.

There is also a natural action of $\gal(k)$ on $K(X)$.
As $\widehat S$ is $\gal(k)$-invariant, so is the set of rational functions~$\widehat S^\sharp$, and hence also the subfield $K_{\widehat S}\subseteq K(X)$.
This means that $K_{\widehat S}$ is the function field of a $K$-variety that descends to $k$, that is, $K_{\widehat S}=K(X')$ for some normal geometrically irreducible algebraic variety $X'$ over $k$, and the embedding $K(X')\subseteq K(X)$ comes from a dominant rational map $\pi:X\to X'$ over $k$.
As $K_{\widehat S}$ is relatively algebraically closed in $K(X)$ the generic fibre of $\pi$ is also geometrically irreducible.

We claim that only finitely many $s\in S$ map dominantly onto $X'_K$ by $\pi_K$.
Suppose towards a contradiction that infinitely many elements of $S$ map dominantly onto $X_K'$.
By the Mordell-Weil-N\'eron-Severi theorem (see \cite[Corollary 6.6.2]{Lang} for details) the divisor class group $\operatorname{Cl}(X)$ is finitely generated (as $k$ is a finitely generated field).
Let $n$ be bigger than the rank of $\operatorname{Cl}(X)$.
Choose $s_1,\dots,s_n\in S$ that map dominantly onto $X'_K$ and have distinct $\gal(k)$-orbits.
Note that as $\pi$ is over~$k$ the $\gal(k)$-conjugates of the $s_i$ also map dominantly onto $X'_K$.
If we let $H_i$ be the union of the $\gal(k)$-conjugates of $s_i$, then $H_i$ descends to $k$ and is $k$-irreducible.
That is, we have distinct prime divisors $d_1,\dots,d_n$ on $X$ over~$k$ such that $H_i={d_i}_K$.
By choice of $n$ there are rational (and so integer) numbers $r_1,\dots,r_n$ not all zero, and $f\in k(X)\setminus k$, such that $\sum r_i d_i=\divisor(f)$ in $\operatorname{Div}(X)$.
So, working again over $K$, the support of $f\in K(X)$ is contained in $\{s_i^\sigma:i=1,\dots,n, \sigma\in\gal(k)\}$.
In particular, $f\in K_{\widehat S}=K(X')$.
But the pullback of a rational function on $X'_K$ cannot have prime divisors in its support that project dominantly onto $X'_K$ -- they must all project onto prime divisors on $X'_K$.
This contradiction proves that only finitely many $s\in S$ map dominantly onto $X'_K$.

There are also only finitely many $s\in S$ that live in the indeterminacy locus of $\pi_K$ as that indeterminacy locus is of codimension $\geq 1$.
Let $S_0$, therefore, be the cofinitely many $s\in S$ that are neither in the indeterminacy locus of $\pi_K$ nor do they project dominantly onto $X'_K$ by $\pi_K$.
For each $s \in S_0$, the Zariski closure of $\pi_K(s)$ is then a proper irreducible subvariety of $X'_K$, which we will denote by $s'$.
By dimension considerations, for cofinitely many $s \in S_0$, the corresponding $s'$ is a prime divisor on $X'_K$.
By Remark 7.1, $\pi_K^*s' = s$ for all but finitely many of these~$s$.
Let $S_1$ be the cofinite subset of $s\in S_0$ such that if $s'$ is the Zariski closure of $\pi_K(s)$, then $s'$ is a prime divisor on $X'_K$ and $\pi_K^*s' = s$.
Let
$$S':=\{\text{Zariski closure of }\pi_K(s):s\in S_1\}.$$
We have that $\pi:(X,S)\to(X',S')$ is a morphism in $\mathcal V_k$.

It remains to show $\pi$ is terminal.
Given a morphism $\phi:(X,S)\to (Y,T)$, we seek to complete the triangle
$$\xymatrix{
(X,S)\ar[r]^{\phi}\ar[d]_{\pi}&(Y,T)\\
(X',S')
}$$
with a morphism $\psi:(Y,T)\to (X',S)$.

Since $\phi:(X,S)\to(Y,T)$ is a morphism, it must be that only finitely many $s\in S$ map dominantly onto $Y_K$ under $\phi_K$.
Replacing $S$ by a cofinite subset, we may assume that there are no such $s\in S$.
It follows from the fact that $\phi:X\to Y$ is over $k$ that also no elements of $\widehat S$ will map dominantly onto $Y_K$.
Now, supppose $f\in\widehat S^\sharp$.
Then no member of the support of $f$ maps dominantly onto $Y_K$.
This means that $f$ has no zeros or poles on the generic fibre $X_\eta$ of $\phi_K$.
By the properness of~$X$, and hence of $X_\eta$ over $K(Y)$, we must have that $f$ is constant on the generic fibre.
So $f$ is the pull-back of a rational function on $Y$.
That is, $\widehat S^\sharp\subseteq K(Y)$, and hence $K(X')=K_{\widehat S}\subseteq K(Y)$.
We thus obtain a dominant rational map $\alpha:Y_K\to X'_K$ with irreducible generic fibre such that $\pi_K=\alpha\phi_K$.
Since $\phi$ is dominant there is a unique such $\alpha$.
Since $\pi$ and $\phi$ are over $k$, an automorphism argument shows that $\alpha$ descends to $k$, that is, $\alpha=\psi_K$ for some dominant rational map $\psi:Y\to X'$ with geometrically irreducible generic fibre.
And we have
$$\xymatrix{
X\ar[r]^{\phi}\ar[d]_{\pi}&Y\ar[dl]^{\psi}\\
X'
}$$
It remains to verify that the symmetric difference between $T$ and $\{\psi_K^*s':s'\in S'\}$ is finite.
But a diagram chase shows that for cofinitely many $s'\in S'$, $\psi_K^*s'$ is the Zariski closure of $\phi_K(\pi_K^*s')$.
Hence, for cofinitely many $s'\in S'$, $\psi_K^*s'\in T$.
On the other hand,
for cofinitely many $t\in T$, $t$ is the proper transform under $\psi$ of the Zariski closure of $\pi_K(\phi_K^*t)$ in $X'_K$, which is in $S'$ for cofinitely many $t$.
\end{proof}

While we think the above theorem may be of independent interest, our immediate motivation is the following alternative proof of a special case of Theorem~\ref{main-reduced}.

\begin{corollary}
\label{main-reduced-irred}
Suppose $Z$ and $X$ are algebraic varieties and $\phi_1,\phi_2:Z\to X$ are dominant rational maps with generic fibres irreducible, all over an algebraically closed field $K$ of characteristic zero.
If there exist infinitely many hypersurfaces $H$ on $X$ satisfying
$\phi_1^{*}H=\phi_2^{*}H$ then there is $g\in K(X)\setminus K$ with $g\phi_1=g\phi_2$.
\end{corollary}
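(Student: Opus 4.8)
The plan is to realise the promised alternative argument: use the terminal morphisms of Theorem~\ref{terminal} to collapse both $Z$ and $X$ onto objects of $\mathcal V_k$ that carry a genuine birational self-map, and then apply Cantat's theorem~\cite{cantat} to that self-map.

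First I would descend, exactly as at the start of the proof of Theorem~\ref{main-nr}, to a finitely generated subfield $k\subseteq K$ over which $X$, $Z$, $\phi_1$, $\phi_2$ and infinitely many of the invariant hypersurfaces $H$ are defined; I would also replace $Z$ by a normal proper model over $k$ and normalise $X$, which is harmless since everything relevant is birational. Let $T$ be the (automatically $\gal(k)$-stable) set of prime divisors of $X_K$ occurring as a component of one of these $H$'s; since the $H$'s are infinite in number and may be taken pairwise non-nested, $T$ is infinite. The first real point is that, putting $S_\nu:=\{(\phi_\nu)_K^{*}t:t\in T\}$ for $\nu=1,2$, the symmetric difference of $S_1$ and $S_2$ is finite: for all but finitely many invariant $H=t_1\cup\cdots\cup t_r$ the proper transforms $\phi_1^{*}t_i$ (respectively $\phi_2^{*}t_i$) are $r$ \emph{distinct} prime divisors, by Remark~\ref{ptirred} and the hypothesis that the generic fibres are irreducible, so the equality $\phi_1^{*}H=\phi_2^{*}H$ forces $\{\phi_1^{*}t_i\}_i=\{\phi_2^{*}t_i\}_i$ as sets. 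Setting $S:=S_1$, both $\phi_1$ and $\phi_2$ are then morphisms $(Z_k,S)\to (X_k,T)$ in $\mathcal V_k$.

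Next I would apply Theorem~\ref{terminal} twice, obtaining terminal morphisms $\pi:(Z_k,S)\to(Z',S')$ and $\rho:(X_k,T)\to(X',T')$. Since each $\rho\phi_\nu:(Z_k,S)\to(X',T')$ is a morphism, terminality of $\pi$ produces a unique morphism $\lambda_\nu:(Z',S')\to(X',T')$ with $\rho\phi_\nu=\lambda_\nu\pi$. The crux is to show $\lambda_1,\lambda_2$ are birational; granting this, $\sigma:=\lambda_1\circ\lambda_2^{-1}$ is a dominant (indeed birational) rational self-map of $X'$, and $(X',\sigma)$ has infinitely many totally invariant hypersurfaces. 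Indeed, for all but finitely many of the invariant $H$ the closure $H':=\overline{\rho_K(H)}$ is a prime divisor on $X'_K$ with $\rho_K^{*}H'=H$, and then $\pi^{*}\lambda_1^{*}H'=\phi_1^{*}\rho^{*}H'=\phi_1^{*}H=\phi_2^{*}H=\phi_2^{*}\rho^{*}H'=\pi^{*}\lambda_2^{*}H'$, whence $\lambda_1^{*}H'=\lambda_2^{*}H'$ on cancelling $\pi^{*}$ (valid for cofinitely many $H$, as $\pi$ has irreducible generic fibre), and so $\sigma^{*}H'=H'$; distinct $H$'s give distinct $H'$'s. Cantat's theorem~\cite{cantat} applied to $(X'_K,\sigma_K)$ then gives $g'\in K(X')\setminus K$ with $g'\sigma=g'$, i.e.\ $g'\lambda_1=g'\lambda_2$. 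Finally $g:=\rho^{*}g'$ lies in $K(X)\setminus K$ (as $\rho^{*}$ is injective) and $g\phi_\nu=(g'\circ\rho\circ\phi_\nu)=(g'\circ\lambda_\nu\circ\pi)=\pi^{*}(g'\lambda_\nu)$, so $g\phi_1=g\phi_2$, as required.

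The main obstacle is the birationality of $\lambda_\nu$ — equivalently, that the terminal objects attached to $(Z_k,S)$ and to $(X_k,T)$ are matched through $\phi_\nu$. In function-field terms $\rho^{*}K(X')=K_{\widehat T}$ inside $K(X)$ and $\pi^{*}K(Z')=K_{\widehat S}$ inside $K(Z)$, and the identity $\phi_\nu^{*}\rho^{*}=\pi^{*}\lambda_\nu^{*}$ gives the inclusion $\phi_\nu^{*}K_{\widehat T}\subseteq K_{\widehat S}$ for free; birationality of $\lambda_\nu$ is the reverse inclusion $K_{\widehat S}\subseteq\phi_\nu^{*}K_{\widehat T}$. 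Since $S$ and $\phi_\nu^{*}T$ agree up to a finite set, it is enough to see that a rational function $g$ on $Z_K$ whose divisor is supported on $\phi_\nu^{*}T$ is, up to the finitely many $\phi_\nu$-contracted divisors, pulled back from $X$; this is exactly where properness of $Z$ and geometric irreducibility of the generic fibre $Z_\eta$ of $\phi_\nu$ are used, since such a $g$ is a unit of $H^{0}(Z_\eta,\mathcal O)=K(X)$ and hence descends, after which one checks its divisor on $X_K$ is supported on $T$ up to finitely many divisors. Along the way I would need the routine observation that changing $S$ or $T$ by a finite set of prime divisors is an isomorphism in $\mathcal V_k$ and so does not disturb the terminal object, which is what legitimises the recurring ``up to finitely many'' manipulations.
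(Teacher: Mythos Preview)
Your overall strategy---take terminal objects for both $(Z_k,S)$ and $(X_k,T)$, show the induced maps between them are birational, and apply Cantat once to the resulting birational self-map of $X'$---is genuinely different from the paper's and, once corrected, does work. The paper instead applies Theorem~\ref{terminal} only once (to the $Z$-side), uses terminality of that $\pi$ to produce $\psi_1,\psi_2$ from $X$ to the terminal object, and then proceeds by \emph{induction on $\dim Z$}, having first disposed of the equidimensional case $\dim X=\dim Z$ by a direct appeal to Cantat. Your route trades the induction for a second invocation of Theorem~\ref{terminal}.

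However, your construction of $\lambda_\nu$ has the direction of terminality backwards. Terminality of $\pi$ says that for any morphism $\psi$ from $(Z_k,S)$ there is a unique map from the \emph{target} of $\psi$ to $(Z',S')$ through which $\pi$ factors; it does not say that $\psi$ factors through $\pi$. Applying it to $\rho\phi_\nu$ therefore yields $\mu_\nu:(X',T')\to(Z',S')$ with $\mu_\nu\rho\phi_\nu=\pi$, not a map $Z'\to X'$. The fix is to also use terminality of $\rho$: first obtain $\psi_\nu:(X_k,T)\to(Z',S')$ with $\psi_\nu\phi_\nu=\pi$ from terminality of $\pi$ applied to $\phi_\nu$ (this is the paper's $\psi_\nu$), and then obtain $\tau_\nu:(Z',S')\to(X',T')$ with $\tau_\nu\psi_\nu=\rho$ from terminality of $\rho$ applied to $\psi_\nu$. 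Now $\tau_\nu\pi=\tau_\nu\psi_\nu\phi_\nu=\rho\phi_\nu$, so $\lambda_\nu:=\tau_\nu$ satisfies your equation, and a one-line check shows $\tau_\nu$ and $\mu_\nu$ are mutually inverse---so $\lambda_\nu$ is \emph{automatically} birational. Your subsequent hands-on argument for the ``reverse inclusion'' $K_{\widehat S}\subseteq\phi_\nu^{*}K_{\widehat T}$ via properness of $Z$ is therefore unnecessary, and in fact it is not quite complete as written: showing that the descended function lies in $(\widehat T\cup F)^\sharp$ for a fixed finite $F$ does not by itself place it in $K_{\widehat T}$, since the stabilisation step in the proof of Theorem~\ref{terminal} controls cofinite \emph{subsets} of $T$, not finite enlargements.
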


\begin{proof}
The general idea of proof is to use Theorem~\ref{terminal} to reduce to the case of a rational dynamical system, and then apply the results of Cantat in that setting (namely Corollary~\ref{cor-cantat} of the introduction).

First we reduce to the case that $\dim X<\dim Z$.
Indeed, suppose $\dim X=\dim Z$.
Then, both $\phi_1$ and $\phi_2$ are birational, and we can consider the birational self-map $\alpha:=\phi_2\phi_1^{-1}:X\to X$.
If $H$ is a hypersurface on $X$ for which $\phi_1^*H=\phi_2^*H$ then $\alpha^*H=H$.
We thus have infinitely many totally invariant hypersurfaces on $X$ for the rational dynamical system $(X,\alpha)$.
By Corollary~\ref{cor-cantat} there is a nonconstant $g\in K(X)$ such that $g \alpha=g$.
Precomposing with $\phi_1$ yields $g\phi_2=g\phi_1$, as desired.
We may therefore assume that $\dim X<\dim Z$.

Suppose $\mathcal H$ is a countably infinite set of hypersurfaces on $X$ whose strict transforms with respect to $\phi_1$ and $\phi_2$ agree.
One complication is that the $H\in\mathcal H$ are not necessarily irreducible, and to deal with that we argue now that we may assume that no two members of $\mathcal H$ share an irreducible component in common.
First some notation:
for $H$ a hypersurface on $X$, let $S_H$ denote the (finite) set of its irreducible components.
Note that $\phi_1^*H=\phi_2^*H$ if and only if $\phi_1^*(S_H)=\phi_2^*(S_H)$ as sets of prime divisors on $Z$.
Now, enumerate $\mathcal H=\{H_0,H_1,\dots\}$ and define a new sequence $H_i'$ recursively by setting $H_0':=H_0$ and $H_{i+1}'$ to be the union of the prime divisors in the set $S_{H_{i+1}}\setminus(\bigcup_{j=0}^iS_{H_j'})$.
Then we get a sequence $H_0',H_1',\dots$ whose nonempty members are hypersurfaces on $Y$ that still satisfy $\phi_1^*H_i'=\phi_2^*H_i'$, because $\phi_1^*(S_{H_i'})=\phi_2^*(S_{H_i'})$.
No two nonempty members of this sequence share an irreducible component.
Moreover, there are infinitely many nonempty $H_i'$, as at any finite stage $\bigcup_{j=0}^iS_{H_j'}$ is a finite set of irreducible hypersurfaces.
So we may as well assume that distinct members of the original $\mathcal H$ share no irreducible components.

We now proceed by induction on the dimension of $Z$, with $\dim Z=0$ being vacuous.
For each $H\in\mathcal H$, let $S_H$ denote the (finite) set of irreducible components of $H$, and $T_H$ the set of irreducible components of $\phi_1^*H=\phi_2^*H$ in $Z$.
Set $\displaystyle S:=\bigcup_{H\in\mathcal H}S_H$ and $\displaystyle T:=\bigcup_{H\in\mathcal H}S_H$.
Let $k_0$ be a finitely generated subfield over which $Z, X, \phi_1, \phi_2$ are defined.
As the statement of the corollary is preserved under birational equivalence, we may assume that $Z$ and $X$ are normal.
Suppose for now that all the members of $\mathcal H$ (and hence $S$ and $T$) are defined over $K_0:=k_0^{\alg}$.
So $(Z,T),(X,S)$ are objects in $\mathcal V_{k_0}$, and $\phi_1,\phi_2:(Z,T)\to (X,S)$ are morphisms in~$\mathcal V_{k_0}$.
By Theorem~\ref{terminal} there is a terminal morphism $\pi:(Z,T)\to (Z',T')$.
We get an induced diagram in $\mathcal V_{k_0}$,
$$\xymatrix{
(Z,T)\ar[drr]^{\pi}\ar[d]_{\phi_2}\ar[rr]^{\phi_1}&&(X,S)\ar[d]^{\psi_1}\\
(X,S)\ar[rr]^{\psi_2}&&(Z',T').
}$$
We want to apply the induction hypothesis to $\psi_1,\psi_2:X\to Z'$.
To do so, we first remove any $H$ from $\mathcal H$ that maps dominantly onto $Z'$ by $\psi_1$.
There can only be finitely many elements of $S$ with this property since $\psi_1$ is a  morphism in
$\mathcal V_{k_0}$.
As $S$ is the set of prime divisors appearing as components of elements of $\mathcal H$, and as no two members of $\mathcal H$ share an irreducible component, there are only finitely many $H$'s to remove.
Removing finitely many more, we may assume that for all $H\in\mathcal H$ the Zariski closure of $\psi_1(H)$, which we denote by $H'$, is a hypersurface on $Z'$, and that $\psi_1^*H'=H$.
Chasing the above diagram, we get that $\psi_2^*H'=H$ also.
We see that there are infinitely many hypersurfaces $H'$ on $Z'$ such that $\psi_1^*H'=\psi_2^*H'$.
By the inductive hypothesis (as $\dim X<\dim Z$) there exists nonconstant $g'\in K(Z')$ such that $g'\psi_1=g'\psi_2=:g\in K(X)$.
So $g\phi_1=g\phi_2$, as desired.

We still have to consider that case that there is an $H\in \mathcal H$ that is not defined over $K_0$.
But we have seen how to deal with this before:
$H$ then is defined over a finitely generated {\em nonalgebraic} extension $k_1$ of~$k_0$.
Let $K_1:=k_1^{\alg}$.
There are infinitely many $\operatorname{Aut}(K_1/k_0)$-conjugates of $H$ and they all satisfy the property that their strict transforms under $\phi_1$ and $\phi_2$ agree, because $\phi_1$ and $\phi_2$ are defined over $k_0$.
Letting $\widetilde{\mathcal H}$ be this infinite set, and working in $\mathcal V_{k_1}$ with  $\widetilde{\mathcal H}$, rather than in $\mathcal V_{k_0}$ with  $\mathcal H$, we can carry out the above argument.
\end{proof}

\bigskip
\section{Positive characteristic}
\label{section-charp}

\noindent
We have worked so far exclusively in characteristic zero, mostly because the differential algebraic techniques we employ in dealing with the nonreduced case very much require it.
But it is reasonable to ask to what extent our proof of the reduced case  can be extended to positive characteristic.

The first thing to observe is that even the special case of Cantat's theorem (Corollary~\ref{cor-cantat}) is false in positive characteristic: consider the dynamical system $(\mathbb P_1,\Fr_p)$  on the projective line over the prime finite field $\mathbb F_p$ equipped with the $p$-power Frobenius morphism;
there are no preserved nonconstant rational functions, but the $\gal(\mathbb F_p)$-orbit of any point in $\mathbb P_1(\mathbb F_p^{\alg})$ is a totally invariant hypersurface.
Our proof breaks down in Proposition~\ref{transform} where we replaced scheme-theoretic inverse images by proper transforms; we used the characteristic zero fact that, after localising, a quasi-finite extension can be made \'etale.
The natural way to deal with this would be to impose some separability condition: we should ask that the dominant rational maps $\phi_1,\phi_2:Z\to X$ have generic fibres that are {\em geometrically reduced}, or what is equivalent, that the function field extensions they induce admit separating transcendence basis.
This is of course automatic in characteristic zero, and in positive characteristic rules out the Frobenius example.
Indeed, the proof of Proposition~\ref{transform} simply goes through in arbitrary characteristic with this additional assumption.\footnote{Proposition~\ref{transform} does make use of~\cite[Lemma~6.11]{bllsm} which is stated for characteristic zero.
However, the proof given there goes through in positive characteristic if we replace the use of Mordell-Weil with Lang-N\'eron.}

However, there is another key point in the proof of Theorem~\ref{main-reduced} where characteristic zero is used.
In reducing to the case when infinitely many of the invariant hypersurfaces are defined over the same finitely generated field~$k$, we first get them over $k^{\alg}$ and then take the union of the Galois-conjugates.
In positive characteristic these hypersurfaces will now only be guaranteed to be over the perfect hull of~$k$, which is not necessarily finitely generated.
We do not see how to avoid this problem and are thus left with the following partial result in arbitrary characteristic.

\begin{theorem}
\label{main-charp}
Fix $K$ an algebraically closed field of arbitrary characteristic.
Suppose $\phi_1,\phi_2:Z\to X$ are dominant rational maps between algebraic varieties over~$K$ with geometrically reduced generic fibres.
Then the following are equivalent:
\begin{itemize}
\item[(1)]
There exists a finitely generated subfield $k\subseteq K$ and infinitely many hypersurfaces $H$ on $X$ defined over $k^{\sep}$ satisfying
$\phi_1^{*}H=\phi_2^{*}H$.
\item[(2)]
There exists $g\in K(X)\setminus K$ such that $g\phi_1=g\phi_2$.
\end{itemize}
\end{theorem}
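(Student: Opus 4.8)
The plan is to re-run the proofs of Theorems~\ref{main-reduced} and~\ref{main-nr}, keeping track of the three places where characteristic zero was used and patching each with the hypotheses now available. As before, the implication $(2)\Rightarrow(1)$ is immediate: if $g\in K(X)\setminus K$ satisfies $g\phi_1=g\phi_2$, pick a finitely generated $k\subseteq K$ over which $Z,X,\phi_1,\phi_2,g$ are all defined; the level sets of $g$ over the (infinitely many) points of $\mathbb A^1(k^{\sep})$ are then hypersurfaces on $X$ defined over $k^{\sep}$ whose proper transforms under $\phi_1$ and $\phi_2$ agree. So the content is in $(1)\Rightarrow(2)$.

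For that direction I would proceed as follows. First, spread out: choose a finitely generated $k\subseteq K$ over which $Z,X,\phi_1,\phi_2$ and infinitely many of the invariant hypersurfaces in~(1) are defined, the latter over $k^{\sep}$. Here one uses that, because $K$ is algebraically closed, \emph{any} finitely generated model of a $K$-variety is automatically geometrically integral over its field of definition (extra components or nilpotents would persist under the base change from $\overline k$ to $K$); consequently $k$ is relatively algebraically closed in $k(X_k)$ and in $k(Z_k)$, and both function fields are separable extensions of $k$ --- facts that in characteristic zero came for free. Next, reduce the hypersurfaces to being defined over $k$ itself: for each invariant hypersurface $H$ over $k^{\sep}$, replace it by the union $\widehat H$ of its finitely many $\gal(k^{\sep}/k)$-conjugates; this union descends to $k$ by Galois descent for reduced closed subschemes over a separable extension, it is still a hypersurface, and $\phi_1^*\widehat H=\phi_2^*\widehat H$ because $\phi_1,\phi_2$ are over $k$ and proper transform distributes over unions. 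Passing to a subsequence so that no member lies in the union of the earlier ones, we obtain infinitely many hypersurfaces $H_j$ on $X_k$, over $k$, with distinct irreducible components, and with $\phi_1^*H_j=\phi_2^*H_j$.

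Now I would invoke the positive-characteristic version of Proposition~\ref{transform}: the hypothesis that $\phi_1,\phi_2$ have geometrically reduced generic fibres is exactly what is needed to make the quasi-finite extension arising from Noether normalization étale after localization, so that, after replacing $Z_k$ and $X_k$ by suitable affine opens $V=\spec(S)$ and $U=\spec(R)$, the scheme-theoretic inverse image and the proper transform agree on all but finitely many hypersurfaces. (This uses \cite[Lemma~6.11]{bllsm}, whose proof works in arbitrary characteristic with Lang--N\'eron in place of Mordell--Weil, so that after a further localization $R$ is a unique factorisation domain and $S$ a domain.) Writing the vanishing ideal of $H_j\cap U$ as $a_jR$, the condition $(\phi_1^V)^{-1}(H_j\cap U)=(\phi_2^V)^{-1}(H_j\cap U)$ becomes $f_1(a_j)S=f_2(a_j)S$, where $f_1,f_2\colon R\to S$ are the injective $k$-algebra maps inducing $\phi_1,\phi_2$, and $(a_j\colon j<\omega)$ is multiplicatively independent modulo $k^\times$ since each $a_j$ acquires a new irreducible factor. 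This is precisely the setting of Theorem~\ref{main-algebra} in the case when $S$ is a domain, and here one checks that the proof of that (reduced) case of Theorem~\ref{main-algebra} is characteristic-free: it uses only finite generation of the unit groups $S^\times/k^\times$ over a finitely generated field (\cite[Corollary~2.7.3]{Lang}) together with a transcendence-degree and minimal-relation argument; it is only the nonreduced case that invokes the characteristic-zero Proposition~\ref{jou}. We thereby obtain $g\in\Frac(R)=k(X_k)$ with $g\notin k$ and $f_1(g)=f_2(g)$, i.e. $g\phi_1=g\phi_2$. Finally $g\notin K$: since $k(X_k)/k$ is regular (separable and relatively algebraically closed), $k(X_k)\otimes_kK$ is a domain with fraction field $K(X)$ in which $k(X_k)\cap K=k$, so a nonconstant element of $k(X_k)$ stays nonconstant in $K(X)$.

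I do not expect a genuinely new difficult step; the work is in checking that the three characteristic-$p$ patches --- geometric integrality of models being automatic, Proposition~\ref{transform} surviving under the geometrically-reduced hypothesis, and the reduced case of Theorem~\ref{main-algebra} being characteristic-free --- cohere. The essential limitation, and the reason condition~(1) must mention $k^{\sep}$ rather than $k^{\alg}$, is that descending a family of hypersurfaces to a finitely generated field by Galois averaging only works when those hypersurfaces are separably algebraic over it; over the full algebraic (equivalently perfect) closure one lands only in the perfect hull of $k$, which need not be finitely generated, and then Theorem~\ref{main-algebra} does not apply --- which is also why, as the Frobenius example shows, the unrestricted statement fails.
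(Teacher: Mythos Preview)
Your proposal is correct and follows essentially the same route as the paper's own proof: reduce to hypersurfaces over $k$ by Galois averaging (using that they start over $k^{\sep}$), apply the positive-characteristic version of Proposition~\ref{transform} via the geometrically-reduced-fibres hypothesis, and then observe that the reduced (integral-domain) case of Theorem~\ref{main-algebra} is characteristic-free. You are somewhat more explicit than the paper about why $k$ is relatively algebraically closed in the function fields and about why $g\notin K$ at the end, but these are elaborations rather than departures.
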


\begin{proof}
This is obtained by inspecting the proofs in characteristic zero, together with the preceding remarks. We give only a brief sketch.

For~(2)$\implies$(1), let $k$ be a finitely generated field over which $Z,X,\phi_1,\phi_2, g$ are defined.
Then the level sets of $g$ over $k^{\sep}$ give rise to infinitely many hypersurfaces satisfying $\phi_1^{*}H=\phi_2^{*}H$.

Suppose~(1) holds.
We may assume that $Z,X,\phi_1,\phi_2$ are all defined over $k$ as well.
Replacing the hypersurfaces by the union of their $\gal(k)$-conjugates, we may assume that they are all defined over $k$ itself.
As discussed above, because of our assumption of geometrically reduced generic fibres, Proposition~\ref{transform} remains true.
Hence, exactly as in the proof of Theorem~\ref{main-reduced}, after replacing $Z$ and $X$ with sufficiently small nonempty Zariski open subsets, we may assume that we have an infinite sequence $(H_j:j<\omega)$ of hypersurfaces satisfying $\phi_1^{-1}(H_j)=\phi_2^{-1}(H_j)$.
We now follow the proof of Theorem~\ref{main-nr} keeping in mind that $Z$ is reduced, but that the characteristic need not be zero.
Possibly shrinking $X$ and $Y$ further, we may assume $X=\spec(R_K)$ and $Y=\spec(S_K)$ where $R$ and $S$ are finitely generated $k$-algebras,  $R$ is a UFD, $S$ is an integral domain, $k$ is relatively algebraically closed in $\Frac(R)$ and $\Frac(S)$, and $\phi_1,\phi_2$ are induced by $k$-algebra embeddings $f_1,f_2:R\to S$.
The hypersurfaces $(H_j:j<\omega)$ must have principal vanishing ideals and so we get a sequence $(a_j:j<\omega)$ in $R$ that is multiplicatively independent modulo $k^\times$ and, because the $H_j$ satisfy $\phi_1^{-1}(H_j)=\phi_2^{-1}(H_j)$,  the $a_j$ satisfy $f_1(a_j)S=f_2(a_j)S$.
The hypothesis of Theorem~\ref{main-algebra} are satisfied, except that we may be in positive characteristic.
But the proof of Theorem~\ref{main-algebra} in the case when $S$ is an integral domain -- this is the first three paragraphs of that proof -- did not use characteristic zero.
Hence, there exists $g\in\Frac(R)\setminus k$ such that $f_1(g)=f_2(g)$.
This proves~(2).
\end{proof}

\begin{question}
\label{dropfg}
Is the assumption in~\ref{main-charp}(1) of the existence of a common finitely generated field of definition necessary?
\end{question}

It may be worth pointing out that Theorem~\ref{terminal} on the category of normal varieties equipped with a set of prime divisors remains true in positive characteristic up to applications of Frobenius transforms -- but this does not seem to help in answering Question~\ref{dropfg} even when the generic fibres of $\phi_1,\phi_2$ are assumed to be irreducible.
\vfill
\pagebreak


\begin{thebibliography}{10}

\bibitem{bllsm}
J.~Bell, S.~Launois, O.~Leon Sanchez, and R.~Moosa.
\newblock Poisson algebras via model theory and differential algebraic
  geometry.
\newblock {\em Journal of the European Math Society}, 19(7):2019?2049, 2017.

\bibitem{BRS}
J.~Bell, D.~Rogalski, and S.~J. Sierra.
\newblock The {D}ixmier-{M}oeglin equivalence for twisted homogeneous
  coordinate rings.
\newblock {\em Israel J. Math.}, 180:461--507, 2010.

\bibitem{cantat}
S.~Cantat.
\newblock Invariant hypersurfaces in holomorphic dynamics.
\newblock {\em Mathematical Research Letters}, 17(5):833--841, 2010.

\bibitem{freitag-moosa}
J.~Freitag and R.~Moosa.
\newblock Finiteness theorems on hypersurfaces in partial
  differential-algebraic geometry.
\newblock {\em Advances in Mathematics}, 314:726--755, 2017.

\bibitem{hrushovski-jouanolou}
E.~Hrushovski.
\newblock Unpublished and untitled notes on ``{\em how to deduce the
  omega-categoricity of degree one strongly minimal sets in DCF from
  Jouanolou's work}" dating from the mid-nineties.

\bibitem{jouanolou}
J.P. Jouanolou.
\newblock Hypersurfaces solutions d'une \'equation de {P}faff analytique.
\newblock {\em Mathematische Annalen}, 232(3):239--245, 1978.

\bibitem{kolchin68}
E.R. Kolchin.
\newblock Algebraic groups and algebraic dependence.
\newblock {\em American Journal of Mathematics}, 90:1151--1164, 1968.

\bibitem{krasnov}
V.~A. Krasnov.
\newblock Compact complex manifolds without meromorphic functions.
\newblock {\em Mat. Zametki}, 17:119--122, 1975.

\bibitem{Lang}
S.~Lang.
\newblock {\em Fundamentals of {D}iophantine geometry}.
\newblock Springer-Verlag, New York, 1983.

\bibitem{paperA}
R.~Moosa and T.~Scanlon.
\newblock Jet and prolongation spaces.
\newblock {\em Journal de l'Institut de Math\'ematiques de Jussieu},
  9(2):391--430, 2010.

\bibitem{paperC}
R.~Moosa and T.~Scanlon.
\newblock Model theory of fields with free operators in characteristic zero.
\newblock {\em Journal of Mathematical Logic}, 14(2):1450009, 43, 2014.

\end{thebibliography}

\end{document}